\documentclass[11pt]{amsart}
\usepackage{tikz}

\usepackage{amsmath}
\usepackage{amssymb}
\usepackage{mathrsfs}
\usepackage[all]{xy}

\newtheorem{theorem}{Theorem}[section]
\newtheorem{claim}[theorem]{Claim}

\newtheorem{lemma}[theorem]{Lemma}

\newtheorem{proposition}[theorem]{Proposition}

\theoremstyle{definition}
\newtheorem{definition}[theorem]{Definition}

\theoremstyle{remark}
\newtheorem{remark}[theorem]{Remark}

\def\calP{\mathcal P}

\def\l{{\langle}}
\def\r{{\rangle}}

\newcount\skewfactor
\def\mathunderaccent#1#2 {\let\theaccent#1\skewfactor#2
\mathpalette\putaccentunder}
\def\putaccentunder#1#2{\oalign{$#1#2$\crcr\hidewidth
\vbox to.2ex{\hbox{$#1\skew\skewfactor\theaccent{}$}\vss}\hidewidth}}

\newcommand{\lusim}[1]{\smash{\underset{\raisebox{1.2pt}[0cm][0cm]{$\sim$}}
{{#1}}}}
\def\smallbox#1{\leavevmode\thinspace\hbox{\vrule\vtop{\vbox
   {\hrule\kern1pt\hbox{\vphantom{\tt/}\thinspace{\tt#1}\thinspace}}
   \kern1pt\hrule}\vrule}\thinspace}


\DeclareMathOperator{\cof}{cof}
\DeclareMathOperator{\dom}{dom}
\DeclareMathOperator{\supp}{Supp}
\DeclareMathOperator{\Succ}{Succ}

\DeclareMathOperator{\Add}{Cohen}
\DeclareMathOperator{\Pri}{Prikry}
\DeclareMathOperator{\Lot}{LOTT}
\DeclareMathOperator{\rng}{rng}
\newcommand{\cf}{{\rm cf}}
\def\Ult{{\rm Ult}}



\setcounter{section}{-1}


\title{On Cohen and Prikry forcing notions}
\date{\today}

\author{Tom Benhamou}

\address{School of Mathematical Sciences, Raymond and Beverly Sackler Faculty of Exact Science, Tel-Aviv University, Ramat Aviv 69978, Israel}
\email{tombenhamou@tauex.tau.ac.il}

\author{Moti Gitik}
\thanks{ The work of the second author was partially supported by ISF grants No.1216/18, 882/22.}
\address{School of Mathematical Sciences, Raymond and Beverly Sackler Faculty of Exact Science, Tel-Aviv University, Ramat Aviv 69978, Israel}
\email{gitik@tauex.tau.ac.il}

\subjclass[2010]{03E02, 03E35, 03E55}
\keywords{Cohen forcing, Prikry forcing, non-normal ultrafilter, the Galvin property}

\begin{document}
\let\labeloriginal\label
\let\reforiginal\ref
\def\ref#1{\reforiginal{#1}}
\def\label#1{\labeloriginal{#1}}

\begin{abstract}
 \begin{enumerate}
  \item We show that it is possible to add $\kappa^+-$Cohen subsets to $\kappa$ with a Prikry forcing over $\kappa$.
This answers a question from \cite{YairTomMoti}.

  \item A strengthening of non-Galvin property is introduced. It is shown to be consistent using a single measurable cardinal which improves
  a previous result by S. Garti, S. Shelah, and the first author \cite{BenGarShe}.  
  \item A situation with Extender-based Prikry forcings is examined. This relates to a  question of H. Woodin.
\end{enumerate}
\end{abstract}
\maketitle
\section{Introduction}
\subsection{Intermediate models of the tree-Prikry forcing} In many mathematical theories, such as groups, vector spaces, topological spaces, graphs etc., the study of submodels of a given model is indispensable to the understanding of the model and in some sense measures its complexity.
In forcing theory,  subforcings  of a given forcing generate intermediate models to a generic extension by the forcing. Hence, the study of intermediate models is somehow parallel to the one regarding subforcings.
There are numerous classification results in this spirit, for example, some forcing such as the Sacks forcing \cite{Sacks} and variants of the tree-Prikry forcing \cite{MinimalPrikry} do not have proper intermediate models. Other forcings such as the Cohen forcing \cite{kanamori1994}, Random forcing \cite{Maharam}, Prikry forcing \cite{PrikryCaseGitikKanKoe}, and Magidor forcing \cite{TomMoti}, \cite{partOne} have intermediate models  of the same type. A tree Prikry forcing or its particular case, which will be central for us in this paper, the Prikry forcing with a non-normal ultrafilter can behave differently.  
For example, 
under suitable large cardinal assumptions, every $\kappa$-distributive forcing of cardinality $\kappa$ is a projection of this forcing. Actually, more is true, under the assumption that $\kappa$ is $\kappa$-compact there is a single Prikry-type forcing which absorbs all the $\kappa$-distributive forcings of cardinality $\kappa$ (see  \cite{GitikOnCompactCardinals}). In the absence of very large cardinals the situation changes, indeed, \cite{YairTomMoti} Hayut and the authors proved that if a certain $<\kappa$-strategically closed forcing of cardinality $\kappa$ is a projection of the tree-Prikry forcing then it is consistent that there is a cardinal $\lambda$ with high Mitchell order, namely $o(\lambda)>\lambda^+$. In \cite{TomMoti}, the authors proved that starting from a measurable cardinal (which is the minimal large cardinal assumption in the context of Prikry forcing) it is consistent that there is a (non-normal) ultrafilter $U$, such that the Prikry forcing with $U$ projects onto the Cohen forcing $\Add(\kappa,1)$, this was improved later in \cite{YairTomMoti} to a larger class of forcing notions called \textit{Masterable forcings}. 
In the context of Prikry-type forcings, the existence of such embeddings and projections allows one to iterate distributive forcing notions on different cardinals, see \cite[Section 6.4]{Gitik2010}.

It remained open whether it is possible to get more Cohen subsets of $\kappa$ after forcing with the Prikry forcing with a $\kappa$-complete ultrafilter $U$ over $\kappa$. This was asked explicitly in \cite{YairTomMoti}.

The basic difficulty is that the size of $\Add(\kappa,\kappa^+)$ is $\kappa^+$ and it is not hard to see (Proposition \ref{Galvin inplies no Cohens}) that this cannot happen, if $U$ has the Galvin property.

We formulate a certain strengthening of the negation of the Galvin property, show its consistency starting with a measurable cardinal and finally apply it in order to construct an ultrafilter $U$ 
such that the Prikry forcing (For a formal definition of the Prikry forcing with non-normal ultrafilter see Definition \ref{Prikry with non normal}) with it adds a generic subset to $\Add(\kappa,\kappa^+)$.

\subsection{Extender-based Prikry forcing and a question of Woodin}

Magidor and the second author developed the Extender-based Prikry forcing in \cite{Git-Mag} to violate the SCH under mild large cardinal assumptions. Later Merimovich \cite{Mer},\cite{Mer1}, presented a variation of this forcing which will be used in this paper.

H. Woodin asked\footnote{We would like to thank Mohammad Golshani for reminding us of the exact formulation of Woodin's question.} in the early $90$s whether, assuming that there is no inner model with a strong cardinal, it is possible to have a model $M$ in which $2^{\aleph_\omega}\geq \aleph_{\omega+3}$, GCH holds  below $\aleph_\omega$, there is an inner model $N$ such that $\kappa=(\aleph_\omega)^M$ is a measurable and 
$2^\kappa\geq (\aleph_{\omega+3})^M$.

A natural approach to tackle Woodin's question is to use the Extender-based Prikry with interleaved collapses forcing, defined my the second author and M. Magidor in \cite{Git-Mag}. This forcing collapses a measurable cardinal to $\aleph_\omega$ and simultaneously blows up the powerset of that measurable. Hence, if one can show that the a generic extension by the Extender-based Prikry forcing has an intermediate model where $\kappa$ stays measurable and $2^\kappa$ is large, this will provide a positive answer to Woodin's question.  In this paper we show that this approach is doomed. More precisely, we address in general the question whether it is possible to add many subsets of $\kappa$ $ \l x_\alpha \mid  \alpha<\lambda\r, \  \lambda\geq \kappa^{++}$ with the Extender-based Prikry forcing over $ \kappa$ such that $\kappa$ remains a regular cardinal in
 $V[ \l x_\alpha \mid  \alpha<\lambda\r]$. We give a negative answer to this question with respect to the Extender-based Prikry forcing as defined in \cite{Git-Mag} and the Merimovich version of the forcing presented in \cite{Mer1,Mer2}. In particular, as a consequence of our results (Theorems \ref{Woodin-answer},\ref{CarmiVersion theorem}), the Extender-based Prikry forcing cannot be used to answer Woodin's question.

\subsection{The Galvin property} F. Galvin \cite{MR0369081}, in 70th, showed that if $\kappa^{<\kappa}=\kappa$ and $F$ is a normal filter over $\kappa$ then the following combinatorial property holds:
$$\text{ For every }\{X_i\mid i<\kappa^+\}\subseteq F\text{ there is }I\subseteq [\kappa^+]^\kappa\text{ such that } \cap_{i\in I}X_i\in F.$$
We denote this statement by $Gal(F,\kappa,\kappa^+)$.  In particular, this holds for the club filter $Cub_{\kappa}$ as it is a normal filter over a cardinal $\kappa$.

In \cite{AbrahamShelah1994}, Uri Abraham and Saharon Shelah constructed a model where  $ Gal(Cub_{\kappa^+},\kappa^+,\kappa^{++})$ fails for a regular $\kappa$. Shimon Garti  \cite{MR3787522},\cite{MR3604115} and later together with the first author and Alejandro Poveda \cite{NegGalSing} continued the investigation of the Galvin property for the club filter. 
The Galvin property for  $\kappa$-complete ultrafilters over a measurable cardinal $\kappa$ was used recently in \cite{GitDensity} and \cite{Parttwo}.
The question of failure of the Galvin property for such ultrafilters was shown to be independent.
Namely, in \cite{Parttwo} the authors observed that in $L[U]$ every $\kappa$-complete ultrafilter has the Galvin property, and Garti, Shelah and the first author, starting with a supercompact cardinal,
produced a model with a $\kappa$-complete ultrafilter which contains $Cub_\kappa$ and fails to satisfy the Galvin property.

In section $2$, we isolate a property of sequences we call a \textit{strong witness for the failure of Galvin's property} which implies in particular the failure of Galvin's property. This property is used in theorem \ref{universe theorem}, where we start from a single measurable cardinal, and construct a model with an ultrafilter which fails to satisfy the Galvin property. This improves the initial large cardinal assumption of \cite{BenGarShe}. 

Later in theorem \ref{main theorem}, we were able to slightly modify the construction of theorem \ref{universe theorem}, construct an ultrafilter $W$ and a strong witness for the failure of the Galvin property for it, which serves to glue together initial segments of functions and obtain  $\kappa^+$-mutually generic Cohen function on $\kappa$. This idea is generalized to longer sequences (and in turn to more Cohen functions) in Theorems \ref{Non-galvin for longer},\ref{Many-Cohens}.

\vskip 0.3 cm
Our main results are:

\vskip 0.2 cm
\textbf{Theorem  \ref{universe theorem}.}
\textit{ Assume GCH and let $\kappa$ be measurable in $V$. Then there is a cofinality preserving forcing extension $V^*$ in which there is a $\kappa$-complete ultrafilter $W$ over $\kappa$ which concentrates on regulars, extends $Cub_\kappa$, and has a strong witness for the failure of Galvin's property.}

\vskip 0.2 cm
\textbf{Theorem \ref{main theorem}.}
 \textit{
Assume $GCH$ and that $\kappa$ is a measurable cardinal in $V$. Then there is a cofinality preserving forcing extension $V^*$ in which $GCH$ still holds, and there is a $\kappa$-complete ultrafilter $U^*\in V^*$ over $\kappa$ such that forcing with Prikry forcing $Pikry(U^*)$ introduces a $V^*$-generic filter for $Cohen^{V^*}(\kappa,\kappa^+)$.}

\vskip 0.2 cm
\textbf{Theorem \ref{Non-galvin for longer}.} \textit{
Assume GCH and that there is a $(\kappa,\kappa^{++})$-extender over $\kappa$ in $V$. Then there is a cofinality preserving forcing extension $V^*$ such that $V^*\models 2^\kappa=\kappa^{++}$, in $V^*$ there is a $\kappa$-complete ultrafilter $W$ over $\kappa$ which concentrates on regulars, extends $Cub_\kappa$, and has a strong witness of length $\kappa^{++}$ for the failure of Galvin's property. }

\vskip 0.2 cm
\textbf{Theorem  \ref{Many-Cohens}.} \textit{
Assume $GCH$ and that $E$ is a $(\kappa,\kappa^{++})-$ extender in $V$. Then there is a cofinality preserving forcing extension $V^*$ in which $2^{\kappa}=\kappa^{++}$ and a non-Galvin ultrafilter $W\in V^*$ such that forcing with $\Pri(W)$ introduces a $V^*$-generic filter for $Cohen^{V^*}(\kappa,\kappa^{++})$-generic filter.}

\vskip 0.2 cm
\textbf{Theorem 
\ref{Woodin-answer}.}
\textit{
Let $\calP_E$ be the Extender-based Prikry forcing of \cite{Git-Mag},
and $G\subseteq \calP$ be a generic. Suppose that $A\in V[G]\setminus V$ is a subset of $\kappa$.
Then $\kappa$ changes its cofinality to $\omega$ in $V[A]$.}

\vskip 0.2 cm
\textbf{Theorem 
\ref{CarmiVersion theorem}.}
\textit{Assume GCH, let $E$ an extender over $\kappa$ and $\mathbb{P}_E$ be the Merimovich version of the Extender-based Prikry forcing of \cite{Mer,Mer1,Mer2}.
Let $G$ be a generic subset of $\mathbb{P}_E$ and let
$\l {A}_\alpha\mid \alpha<\kappa^{++} \r$ be different subsets of $\kappa$ in $V[G]$.
Then there is $I\subseteq \kappa^{++}, I\in V, |I|=\kappa$ such that $\kappa$ is a singular cardinal of cofinality $\omega$ in
$V[\l {A}_\alpha\mid \alpha\in I\r]$.
In particular, there is no intermediate model of $V[G]$ where $\kappa$ is measurable and $2^\kappa>\kappa^+$.}

\vskip 0.3 cm

This paper is organized as follows:
\begin{itemize}
    \item Section 1: We provide the basic definitions and  background for this paper.
    \item Section 2: We prove Theorems \ref{universe theorem}, \ref{main theorem}.
    \item Section 3: We prove Theorems \ref{Non-galvin for longer}, \ref{Many-Cohens}.
    \item Section 4: We prove Theorems \ref{Woodin-answer},\ref{CarmiVersion theorem}.

\end{itemize}

\section{Basics}
\subsection{The forcing notions}
In our notations $p\leq q$ means that $q$ is stronger than $p$. 
We assume that the reader is familiar with the forcing method and iterated forcing.
Most of our notations are inspired by \cite{CummingsHand},\cite{Gitik2010} where we refer the reader for more information regarding forcing and iterations. Let us present the definitions of the forcing we intend to use:
\begin{definition}
The forcing adding $\lambda$-many Cohen functions to $\kappa$ denoted by $\Add(\kappa,\lambda)$ consists of all partial functions $f:\kappa\times\lambda\rightarrow\{0,1\}$ such that $|f|<\kappa$. The order is defined by $f\leq g$ iff $f\subseteq g$. 
\end{definition}
\begin{definition}\label{Prikry with non normal}
Let $U$ be a $\kappa$-complete non-trivial ultrafilter over $\kappa$ and $\pi:\kappa\rightarrow\kappa$ be the function representing $\kappa$ in the $\Ult(V,U)$. The Prikry forcing with $U$, denoted by $\Pri(U)$ consists of all sequences $\l\alpha_1,...,\alpha_n, A\r$ such that:
\begin{enumerate}
    \item $\l\alpha_1,...,\alpha_n\r$ is an $\pi$-increasing sequence of ordinals below $\kappa$ i.e. for every $1\leq i<n$, $\alpha_i<\pi(\alpha_{i+1})$
    \item $A\in U$, $\pi(\min(A))>\alpha_n$.
\end{enumerate}
The order is defined by $\l\alpha_1,...,\alpha_n,A\r\leq \l\beta_1,...,\beta_m,B\r$ iff:
\begin{enumerate}
    \item $n\leq m$ and for every $i\leq n$, $\alpha_i=\beta_i$.
    \item for every $n<i\leq m$, $\beta_i\in A$.
    \item $B\subseteq A$.
\end{enumerate}
If $n=m$ we say that $q$ directly extends $p$ and denote it by $p\leq^* q$.
\end{definition}
If $U$ is normal then we can take $\pi=id$ and the forcing $\Pri(U)$ is the standard Prikry forcing. The requirement that the sequence is $\pi$-increasing ensures that the forcing $\Pri(U)$ is forcing equivalent to the tree-Prikry forcing defined in \cite{Gitik2010}. Also, it enables to define a diagonal intersection suitable for the non-normal case, namely, for $\{A_i\mid i<\kappa\}\subseteq U$ define 
$$\Delta^*_{i<\kappa}A_i:=\{\alpha<\kappa\mid\forall i<\pi(\alpha).\alpha\in A_i\}.$$
This kind of diagonal intersection instead of the standard one is used to prove the Prikry property of $\Pri(U)$.

Later we will need the easy direction of the Mathias criterion \cite{Mathias} for Prikry-generic sequences, and the proof can be found in \cite[Corollary 4.22]{TomTreePrikry}:
\begin{lemma}\label{easy direction of Mathias}
Let $G\subseteq \Pri(U)$ be a generic filter producing a Prikry sequence $\{c_n\mid n<\omega\}$. Then for every $A\in U$, there is $N<\omega$ such that for every $n\geq N$, $c_n\in A$.
\end{lemma}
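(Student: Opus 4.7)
The plan is to use a simple density argument. For each $A\in U$, I would consider the set
\[ D_A \;=\; \bigl\{\,\langle \beta_1,\dots,\beta_m, B\rangle \in \Pri(U) : B \subseteq A\,\bigr\}, \]
and show it is dense. Given an arbitrary condition $q = \langle \beta_1,\dots,\beta_m, C\rangle$, the set $C\cap A$ lies in $U$ (since $U$ is a filter), and since $C\cap A \subseteq C$, we still have $\pi(\min(C\cap A)) \geq \pi(\min(C)) > \beta_m$; hence $\langle \beta_1,\dots,\beta_m, C\cap A\rangle$ is a legitimate condition that directly extends $q$ and lies in $D_A$. So $D_A$ is in fact even $\leq^*$-dense.

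Next I would pick a condition $p = \langle \alpha_1,\dots,\alpha_N, B\rangle \in G\cap D_A$, which exists by genericity. The description of the Prikry sequence $\{c_n\mid n<\omega\}$ read off from $G$ says that $c_i=\alpha_i$ for $i\leq N$, and that every further element $c_n$ with $n>N$ must come from $B$: indeed, any condition in $G$ extending $p$ has the form $\langle \alpha_1,\dots,\alpha_N,\alpha_{N+1},\dots,\alpha_k,B'\rangle$ with $\alpha_{N+1},\dots,\alpha_k\in B$ and $B'\subseteq B$, by the definition of the order. Hence $c_n \in B \subseteq A$ for all $n > N$, and the lemma holds with $N$ replaced by $N+1$ (to subsume the first $N$ stem elements which may or may not lie in $A$).

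There is no real obstacle here; the only thing to be careful about is that the $\pi$-increasing requirement in the definition of $\Pri(U)$ is preserved when intersecting the large set with $A$, and that is immediate from $C\cap A\subseteq C$. The argument uses nothing beyond $\kappa$-completeness (in fact, just the filter property) of $U$ and the explicit description of the order on $\Pri(U)$.
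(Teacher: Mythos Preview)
Your density argument is correct and is the standard proof of this direction of the Mathias criterion. The paper does not actually prove this lemma; it only cites an external reference, so there is no in-paper proof to compare against.

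One small quibble: your inequality $\pi(\min(C\cap A)) \geq \pi(\min(C))$ does not follow from $C\cap A\subseteq C$ alone, since $\pi$ need not be monotone. The easy fix is to shrink further to $C\cap A\cap\{\nu<\kappa:\pi(\nu)>\beta_m\}$; the last set lies in $U$ because $[\pi]_U=\kappa>\beta_m$, and now every element of the intersection (in particular its minimum) has $\pi$-value above $\beta_m$. Alternatively, if condition~(2) in Definition~\ref{Prikry with non normal} is read as ``$\pi(\nu)>\alpha_n$ for all $\nu\in A$'' (which is what one needs anyway for extensions to remain $\pi$-increasing), then your argument goes through verbatim since $C\cap A\subseteq C$.
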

For more information regarding the tree-Prikry forcing see \cite{Gitik2010} or \cite{TomTreePrikry}. 
In the following, we define the notion of \textit{lottary sum}. The terminology “lottery sum” is due to Hamkins, although the concept of the
lottery sum of partial orderings has been around for quite some time and has been referred
to, for example, as “disjoint sum of partial orderings”:
\begin{definition}\label{lottery}
Let $\mathbb{P}_0,\mathbb{P}_1$ be two forcing notions. The \textit{lottery sum} of $\mathbb{P}_0$ and $\mathbb{P}_1$ denoted by $\Lot(\mathbb{P}_0,\mathbb{P}_1)$ is the forcing whose underlining set is $\mathbb{P}_0\times\{0\}\cup\mathbb{P}_1\times\{1\}$ and the order is define by $\l p,i\r\leq \l p',j\r$ iff $i=j$ and $p\leq_{\mathbb{P}_i} p'$.
\end{definition}
The forcing $\Lot(\mathbb{P}_0,\mathbb{P}_1)$ generically chooses $\mathbb{P}_0$ or $\mathbb{P}_1$ and adds a $V-$generic filter for it. 
As Hamkins observed in \cite{Hamkins2000-HAMTLP}, iterating such forcing notions leaves a certain amount of freedom when lifting ground model embeddings, this will be exploited in most of our construction.

In section~$4$ we will discuss the Extender-based Prikry forcing which was originally defined by Magidor and the second author in \cite{Git-Mag}. A more recent variation of it is due to Carmi Merimovich \cite{Mer,Mer1,Mer2}. 

Let us present the two versions. Let $E$ be a $(\kappa,\lambda)-$extender and $j=j_E:V\rightarrow M_E\simeq Ult(V,E)$ the natural elementary embedding (See \cite{Jech2003} for the definition of extenders and related constructions) and suppose that $f_\lambda:\kappa\rightarrow\kappa$ is a function such that $j(f_\lambda)(\kappa)=\lambda$ (our result uses $\lambda=\kappa^{++}$ and we can simply take $f_\lambda(\nu)=\nu^{++}$). 
Let us first present the \textit{Merimovich version} of the Extender-based Prikry forcing.

For each set of cardinality $\leq\kappa$, $d\in[\lambda\setminus \kappa]^{\leq\kappa}$ with $\kappa\in d$. Define 
$$E(d)=\{X\in V_\kappa\mid (j\restriction d)^{-1}\in j(X)\}.$$
If $A\in E(d)$ we can assume that for every $\nu,\mu\in A$, $\nu:d\rightarrow\kappa$ is order preserving, $\kappa\in\dom(\nu)$, $|\nu|\leq \nu(\kappa)$, $\nu(\kappa)=\mu(\kappa)\rightarrow \dom(\nu)=\dom(\mu)$.
Merimovich calls such a set a \textit{good set}.

\begin{definition}\label{Carmi Version}
The \textit{conditions of $\mathbb{P}_E$} are pairs $p=\l f^p,A^p\r$ such that \begin{enumerate}
    \item $f^p:d\rightarrow [\kappa]^{<\omega}$ is the ``Cohen Part" of the condition, $d\in[\lambda\setminus\kappa]^{<\omega}$, $\kappa\in d$.
    \item $A^p\in E(d)$ is a good set.
    \item  for every $\nu\in A^p$ and $\alpha\in\dom(\nu)$, $\max(f^p(\alpha))<\nu(\kappa)$.
\end{enumerate}
The \textit{order of $\mathbb{P}_E$} is defined in two steps: a direct extension is defined by $\l f,A\r\leq^* \l g,B\r$ if
\begin{enumerate}
    \item 
$f\subseteq g$.  
\item $B\restriction \dom(f):=\{\nu\restriction \dom(\nu)\cap\dom(f)\mid \nu \in B\}\subseteq A$.
\end{enumerate}
A one-point extension of $p=\l f,A\r$ for $\nu\in A$ is defined by $p^{\smallfrown}\nu=\l g,B\r$ where
\begin{enumerate}
    \item $\dom(g)=\dom(f)$.
    \item for every $\alpha\in\dom(g)$
$$g(\alpha)=\begin{cases} f(\alpha)^{\smallfrown}\nu(\alpha) & \alpha\in\dom(\nu)\\
f(\alpha) & else\end{cases}.$$
\item $B=\{\mu\in A\mid \sup_{\alpha\in\dom(\nu)}(\nu(\alpha)+1)\leq \mu(\kappa)\}$.
\end{enumerate}  
an $n$-point extension $p{}^{\smallfrown}\vec{\nu}$ is defined recursively by consecutive one-point extensions.
A general extension is defined by $p\leq q$ iff for some $\vec{\nu}\in [A^p]^{<\omega}$, $p^{\smallfrown}\vec{\nu}\leq^* q$. 
\end{definition}

As in Merimovich \cite{Mer1}, we will sometime replace the large set $A$ in a condition $\l f,A\r$ with a Tree $T$ which is $E(\dom(f))$-fat.

Let us now present the \textit{original version} defined by M. Magidor and the second author from \cite{Git-Mag}.
Define for every $\kappa\leq\alpha<\lambda$:
$$U_\alpha:=\{X\subseteq\kappa\mid \alpha\in j(X)\}$$
These are $P-$point ultrafilters. 
For every $\alpha\leq \beta<\lambda$ we define that $\alpha\leq_E\beta$ if there is some $f:\kappa\rightarrow\kappa$, $j(f)(\beta)=\alpha$. This implies that $f$ Rudin-Keisler projects $U_\beta$ onto $U_\alpha$. For every such pair $\alpha\leq_E\beta$ fix such a projection $\pi_{\beta,\alpha}$  such that $\pi_{\alpha,\alpha}=id$. The projections to the normal measure $U_\kappa$ have a uniform definition, $\pi_{\alpha,\kappa}(\nu)=\nu^0$ where $\nu^0$ is the maximal inaccessible $\nu^*\leq\nu$ such that $f_\lambda\restriction \nu^*:\nu^*\rightarrow \nu^*$, $f_\lambda(\nu^*)>\nu$ and $\pi_{\alpha,\kappa}(\nu)=0$ if there is no such $\nu^*$. Suppose that the system $\l U_\alpha,\pi_{\alpha,\beta}\mid \alpha\leq\beta<\lambda, \alpha\leq_E\beta\r$ is a \textit{nice system} (See \cite{Git-Mag} or \cite[Discussion after Lemma 3.5]{Gitik2010}). Let us say that $\nu$ is \textit{permitted} for $\nu_0,...\nu_n$ is $\nu^0>\max_{i=0,..,n}\nu^0_i$. 
\begin{definition}\label{Original Version}

 \textit{The condition} of the forcing $\mathcal{P}_E$ are pairs $p=\l f,T\r$ such that:
 \begin{enumerate}
     \item $f:\lambda\setminus\kappa\rightarrow[\kappa]^{<\omega}$, $\kappa\in\dom(f)$, $|f|\leq\kappa$.
     \item For each $\alpha\in\supp(p):=\dom(f)$, $\pi_{\alpha,\kappa}''f(\alpha)$ is a finite increasing sequence.
     \item The domain of $f$ has a $\leq_E$-maximimal element $mc(p):=\alpha=\max(\supp(p))$.
     \item $\pi_{mc(p),\kappa}''f(mc(p))=f(\kappa)$.
     \item For every $\gamma\in \supp(p)$, $\pi_{mc(p),\gamma}(\max(f(mc(p)))$ is not permitted to $f(\gamma)$.
     \item $T$ is a  $U_{mc(p)}$-splitting tree with stem $f(mc(p))$, namely, for $s\in T$, either $s\leq t$, or $s\geq t$ and $\Succ_T(s):=\{\alpha<\kappa\mid s{}^{\smallfrown}\alpha\in T\}\in U_{mc(p)}$. 
     \item For every $\nu\in \Succ_T(f(mc(p)))$, $$|\{\gamma\in \supp(p)\mid \nu\text{ is permitted to } f(\gamma)\}|\leq \nu^0.$$
 \end{enumerate}    
 \textit{The order} is defined $p\leq q$ if:
 \begin{enumerate}
     \item $\supp(p)\subseteq \supp(q)$.
     \item for $\gamma\in\supp(p)$, $f^q(\gamma)$ is an end-extension of $f^p(\gamma)$.
     \item $f^q(mc(p))\in T^p$.
     \item for $\gamma\in \supp(p)$, $f^q(\gamma)\setminus f^p(\gamma)=\pi_{mc(p),\gamma}''f^q(mc(p))\setminus f^p(mc(p))\restriction (i+1)$, where $i$ is maximal such that $f^q(mc(p))$ is not  permitted for $f^p(\gamma)$.
     \item $\pi_{mc(q),mc(p)}''T^q\subseteq T^p$.
     \item For every $\gamma\in\supp(p)$, and $\nu\in \Succ_{T^q}(f^q(mc(q)))$, such that $\nu$ is permitted for $f^q(\gamma)$ (So by condition (7) there are only $\nu^0$-many such $\gamma$'s) then $\pi_{mc(q),\gamma}(\nu)=\pi_{mc(p),\gamma}(\pi_{mc(q),mc(p)}(\nu))$
 \end{enumerate}
\end{definition}
\subsection{Canonical functions} The main construction of this paper uses the notion of canonical functions:
\begin{definition}
For every limit ordinal $\delta<\kappa^+$, fix a cofinal sequence $\bar{\delta}=\{ \delta_i\mid i<cf(\delta)\}$. Let us define inductively functions $\tau_\alpha:\kappa\rightarrow\kappa$ for $\alpha<\kappa^+$:
$$\tau_0(x)=0.$$
$$\tau_{\alpha+1}(x)=\tau_\alpha(x)+1.$$
$$\text{For limit }\delta, \ \tau_{\delta}(x)=\sup_{y<\min(x,cf(\delta))}\tau_{\delta_y}(x).$$

\end{definition}
\begin{proposition}\label{properties of canonical functions}
Let $\lambda\leq\kappa$ be a regular cardinal. Then:
\begin{enumerate}
    \item For every $\alpha<\beta<\lambda^+$, $\{\nu\mid \tau_{\alpha}(\nu)\geq\tau_{\beta}(\nu)\}$ is bounded in $\lambda$.
    \item For every any $\alpha<\lambda^+$, $\tau_{\alpha}:\lambda\rightarrow\lambda$.
    \item For every normal measure $\mathcal{V}$ on $\lambda$, and for every $\alpha<\lambda^+$, $[\tau_{\alpha}]_{\mathcal{V}}=\alpha$.
    \item If $\lambda<\kappa$, then for every $\beta$, $\tau_{\beta}(\lambda)<\lambda^+$.
\end{enumerate}
\begin{proof}
For $(1)$, we prove inductively on $\beta<\lambda^+$ that for every $\alpha<\beta$, $(1)$ holds.
For $\beta=0$ this is vacuous. The successor stage  is also easy since for every $x$, $\tau_{\beta}(x)<\tau_{\beta+1}(x)$ so if $\alpha<\beta$ then by induction hypothesis there is $\xi<\lambda$ from which $\tau_{\beta}$ dominates $\tau_{\alpha}$ i.e. $\forall\nu\in (\xi,\lambda).\tau_\alpha(\nu)<\tau_{\beta}(\nu)$. It follows that for the same $\xi$, $\tau_{\alpha}(\nu)<\tau_{\beta+1}(\nu)$. As for limit points $\delta$. Fix any $\alpha<\delta$, then there is $i< cf(\delta)\leq \lambda$ such that $\delta_i>\alpha$. By induction hypothesis there is $\xi_i<\lambda$ such that $\tau_{\delta_i}(\nu)>\tau_{\alpha}(\nu)$ for every $\nu\in(\xi_i,\lambda)$. Let $\xi^*:=\max\{\xi_i, i\}+1<\lambda$. It follows that for every $\nu\in(\xi^*,\lambda)$, $\nu>i$, hence $$\tau_{\delta}(\nu)=\sup_{y<\min(\nu,cf(\delta))}\tau_{\delta_y}(\nu)\geq \tau_{\delta_i}(\nu)>\tau_{\alpha}(\nu).$$

Prove $(2),(3),(4)$ by induction on $\alpha<\lambda^+$. For $\alpha=0$ this is trivial. Suppose that $(2),(3),(4)$ holds for $\alpha$ then clearly by induction hypothesis $\tau_{\alpha+1}:\lambda\rightarrow\lambda$, and $\tau_{\alpha+1}(\lambda)=\tau_{\alpha}(\lambda)+1<\lambda^+$, namely $(2),(4)$ follows. Also, $\lambda=\{\nu<\lambda\mid \tau_{\alpha}(\nu)+1=\tau_{\alpha+1}(\nu)\}\in\mathcal{V}$, hence by L\'{o}s theorem and the induction hypothesis:
$$\alpha+1=[\tau_{\alpha}]_{\mathcal{V}}+1=[\tau_{\alpha+1}]_{\mathcal{V}}.$$ 
 Suppose that $\delta<\lambda^+$ is limit, then by induction hypothesis, for every $x<\lambda$ and $y<\min(x,cf(\delta))<\lambda$,  $\tau_{\delta_y}(x)<\lambda$ . It follows from the regularity of $\lambda$ that $$\tau_\delta(x)=\sup_{y<\min(x,cf(\delta))}\tau_{\delta_y}(x)<\lambda.$$  This concludes $(2)$. Also, $(4)$ follows similarly using the regularity of $\lambda^+$. As for $(3)$, we use $(1)$ to conclude that for every $\alpha<\delta$, $\{\nu<\lambda\mid \tau_{\alpha}(\nu)\geq \tau_{\delta}(\nu)\}$ is bounded. Hence by induction $\alpha=[\tau_{\alpha}]_{\mathcal{V}}<[\tau_{\delta}]_{\mathcal{V}}$. It follows that $\delta\leq [\tau_{\delta}]_{\mathcal{V}}$. For the other direction, suppose that $[f]_{\mathcal{V}}<[\tau_{\delta}]_{\mathcal{V}}$, then $$E:=\{x<\lambda\mid f(x)<\tau_{\delta}(x)\}\in\mathcal{V}.$$ By definition of $\tau_{\delta}$, for every $x\in E$, there is $y_x<\min(x,cf(\delta))$ such that $\tau_{\delta_{t_x}}(x)>f(x)$. the function $x\mapsto y_x$ is regressive, and by normality we conclude that there is $y^*<cf(\delta)$ and $E'\subseteq E$ such that for every $x\in E'$, $f(x)<\tau_{\delta_{y^*}}(x)$.
Hence $[f]_{\mathcal{V}}<[\tau_{\delta_{y^*}}]_{\mathcal{V}}=\delta_{y^*}<\delta$ and in turn $\delta=[\tau_{\delta}]_{\mathcal{V}}$. 
\end{proof}
\end{proposition}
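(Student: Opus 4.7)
The plan is to prove all four clauses by transfinite induction, leveraging the recursion defining $\tau_\alpha$ and standard properties of normal measures. I will first establish (1) on its own, then run a simultaneous induction on $\alpha<\lambda^+$ for (2), (3), (4).

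For (1), I induct on $\beta<\lambda^+$. The case $\beta=0$ is vacuous. At a successor $\beta+1$, the identity $\tau_{\beta+1}(\nu)=\tau_\beta(\nu)+1$ reduces the claim for $\alpha<\beta+1$ to the induction hypothesis at $\beta$. At a limit $\delta$, given $\alpha<\delta$, I pick $i<cf(\delta)$ with $\delta_i>\alpha$, apply induction to get a threshold $\xi<\lambda$ above which $\tau_{\delta_i}$ strictly dominates $\tau_\alpha$, and then for $\nu>\max(\xi,i)$ note that $\tau_{\delta_i}(\nu)$ appears among the terms of the defining supremum, whence $\tau_\delta(\nu)\ge\tau_{\delta_i}(\nu)>\tau_\alpha(\nu)$.

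For (2), (3), (4), the successor step is immediate from the recursion and the induction hypothesis. At a limit $\delta<\lambda^+$, (2) follows from the regularity of $\lambda$: each term $\tau_{\delta_y}(\nu)$ lies below $\lambda$ by induction, and the supremum ranges over $\min(\nu,cf(\delta))<\lambda$ indices, so it stays below $\lambda$. Clause (4) is analogous, using the regularity of $\lambda^+$ and the fact that $cf(\delta)\le\lambda$ since $\delta<\lambda^+$. The inequality $\delta\le[\tau_\delta]_\mathcal{V}$ in (3) drops out of (1) combined with the induction hypothesis: each $\delta_y<\delta$ satisfies $[\tau_{\delta_y}]_\mathcal{V}=\delta_y$ by induction, and (1) forces $[\tau_{\delta_y}]_\mathcal{V}<[\tau_\delta]_\mathcal{V}$, so $[\tau_\delta]_\mathcal{V}$ sits above the set $\{\delta_y:y<cf(\delta)\}$, which is cofinal in $\delta$.

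The meat of the proof is the reverse direction of (3). I would suppose $[f]_\mathcal{V}<[\tau_\delta]_\mathcal{V}$ for some $f$, note that $E:=\{\nu<\lambda:f(\nu)<\tau_\delta(\nu)\}\in\mathcal{V}$, and use the definition of $\tau_\delta$ as a supremum to extract, for each $\nu\in E$, an index $y_\nu<\min(\nu,cf(\delta))$ with $f(\nu)<\tau_{\delta_{y_\nu}}(\nu)$. Because $y_\nu<\nu$, the map $\nu\mapsto y_\nu$ is regressive on $E$, so normality of $\mathcal{V}$ delivers a stabilizing value $y^*<cf(\delta)$ on a set in $\mathcal{V}$, and the induction hypothesis at $\delta_{y^*}$ then pins $[f]_\mathcal{V}\le[\tau_{\delta_{y^*}}]_\mathcal{V}=\delta_{y^*}<\delta$. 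I expect this step to be the only non-mechanical one: the subtle design choice is the presence of $\min(x,cf(\delta))$ in the definition of $\tau_\delta$, without which $y_\nu$ would fail to be regressive when $cf(\delta)>\lambda$, and normality could not be brought to bear.
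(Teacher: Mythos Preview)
Your proposal is correct and follows essentially the same approach as the paper's proof: the same induction on $\beta$ for (1), the same simultaneous induction for (2)--(4), and in particular the same use of normality via the regressive choice function $\nu\mapsto y_\nu$ to bound $[\tau_\delta]_{\mathcal V}$ from above at limit stages. The only cosmetic difference is that the paper phrases the lower bound $\delta\le[\tau_\delta]_{\mathcal V}$ in terms of arbitrary $\alpha<\delta$ rather than the cofinal $\delta_y$'s, which is immaterial.
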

\section{The results where GCH holds}
\subsection{Non-Galvin ultrafilter from optimal assumption}

In \cite{BenGarShe}, Garti, Shelah and the first author constructed a model with a $\kappa$-complete ultrafilter which contains $Cub_\kappa$ and fails to satisfy the Galvin property.
The initial assumption was a supercompact cardinal and the construction went through adding slim Kurepa trees.

Here we present a different construction. Our initial assumption will be a measurable cardinal and the property obtained will be a certain strengthening of the negation of the Galvin property.
It will be used further to produce many Cohens.

Let us first present the stronger form of negation:
\begin{definition}
Let $U$ be a $\kappa-$complete ultrafilter non-normal over $\kappa$.
We call a family  $\{ A_\alpha \mid \alpha<\kappa^+\}\subseteq U$ a \emph{strong witness} for the failure of the Galvin property iff for every subfamily $\l A_{\alpha_\xi} \mid \xi<\kappa\r$ of size $\kappa$ the following hold:
$$ \text{ for every }\zeta, \kappa\leq \zeta< [id]_U, \ [id]_U \not \in A'_{\alpha_\zeta},$$
$$\text{where }\l A'_{\alpha_\zeta} \mid \zeta<j_U(\kappa)\r=j_U(\l A_{\alpha_\xi} \mid \xi<\kappa\r).$$
\end{definition}

\begin{remark}
\begin{enumerate}
  \item Note that the interval $[\kappa, [id]_U)$ is non-empty since $U$ is not normal.

  \item The family $\{ A_\alpha \mid \alpha<\kappa^+\}$ witnesses the failure of the Galvin property for $U$.
  
  \textit{Proof.} since whenever  $\l A_{\alpha_\xi} \mid \xi<\kappa\r$ is a subfamily of size $\kappa$, then $\bigcap_{\xi<\kappa} A_{\alpha_\xi}$ is not in $U$. Otherwise, suppose that $\bigcap_{\xi<\kappa} A_{\alpha_\xi}=B \in U$.
     Then $[id]_U \in j_U(B)$, but $j_U(B)=\bigcap_{\zeta<j_U(\kappa)}A'_{\alpha_\zeta}$. However, $[id]_U \not \in A'_{\alpha_\zeta}$, for every $\zeta, \kappa\leq \zeta< [id]_U$. Contradiction.
     \end{enumerate}

\end{remark}
\begin{lemma}
Suppose that $\{ A_\alpha \mid \alpha<\kappa^+\}$ is a strong witness for the failure of the Galvin property of the ultrafilter $U$ over $\kappa$.
Let $U^0=\{X\subseteq \kappa \mid \kappa\in j_U(X)\}$ be a projection of $U$ to a normal ultrafilter, $\nu\mapsto \pi_{nor}(\nu)$ a projection map  and
$k: M_{U^0}\to M_U$ the corresponding elementary embedding. Assume that $crit(k)=j_{U^0}(\kappa)=[id]_U$. Then $[id]_U \not \in B$, for every $B \in j_U(\{ A_\alpha \mid \alpha<\kappa^+\})$ which is in  $\rng(k)\setminus \rng(j_U)$.

\end{lemma}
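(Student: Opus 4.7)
The plan is to translate the geometric condition $B \in \rng(k) \setminus \rng(j_U)$ into an indexing statement about the position of $B$ in $j_U(\vec{A})$, where $\vec{A} := \langle A_\alpha \mid \alpha<\kappa^+\rangle$, and then invoke the strong-witness hypothesis on an injective subfamily of size $\kappa$ built from the representing function.

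First I would write $B = k(C)$ with $C \in M_{U^0}$, and use elementarity together with $j_U = k \circ j_{U^0}$ to identify $C = j_{U^0}(\vec{A})_\gamma$ for some $\gamma < j_{U^0}(\kappa^+)$, whence $B = j_U(\vec{A})_\beta$ with $\beta = k(\gamma)$. Writing $C = [f]_{U^0}$ with $f(\nu) = A_{g(\nu)}$ for some $g \colon \kappa \to \kappa^+$ gives $\gamma = [g]_{U^0}$. The hypothesis $B \notin \rng(j_U)$, via injectivity of $k$ and elementarity, rules out $\gamma = j_{U^0}(\alpha)$ for any $\alpha < \kappa^+$ (using distinctness of the $A_\alpha$), which is precisely the statement that $g$ is not constant on any set in $U^0$.

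Second, I would harvest a clean factorization of $g$. By $\kappa$-completeness of $U^0$, non-constancy of $g$ forces $|g[\kappa]| = \kappa$: otherwise $\kappa$ is partitioned into fewer than $\kappa$ level sets of $g$ and one of them lies in $U^0$, contradicting non-constancy. Enumerate $g[\kappa]$ as $\{h(\xi) \mid \xi < \kappa\}$ with an injection $h \colon \kappa \to \kappa^+$ and put $e := h^{-1} \circ g \colon \kappa \to \kappa$, so that $g = h \circ e$. The crucial claim is $\zeta := [e]_{U^0} \in [\kappa, [id]_U)$: the upper bound $\zeta < j_{U^0}(\kappa) = [id]_U$ is immediate since $e$ takes values in $\kappa$, while injectivity of $h$ propagates non-constancy from $g$ to $e$, so normality of $U^0$ forbids $\{\nu \mid e(\nu) < \nu\} \in U^0$ (else Fodor would collapse $e$ to a constant), whence $\{\nu \mid e(\nu) \geq \nu\} \in U^0$ and $\zeta \geq [id]_{U^0} = \kappa$.

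Third, since $\zeta < [id]_U = crit(k)$ the map $k$ fixes $\zeta$, and
\[\beta = k(\gamma) = k(j_{U^0}(h)(\zeta)) = j_U(h)(k(\zeta)) = j_U(h)(\zeta).\]
The subfamily $\langle A_{h(\xi)} \mid \xi < \kappa\rangle$ has size $\kappa$ by injectivity of $h$ (and distinctness of the $A_\alpha$), and applying the strong-witness hypothesis at the index $\zeta \in [\kappa, [id]_U)$ yields $[id]_U \notin j_U(\vec{A})_{j_U(h)(\zeta)} = j_U(\vec{A})_\beta = B$, as required. I expect the main obstacle to be the bookkeeping between the two ultrapowers: keeping straight which ordinals live in $M_{U^0}$ versus $M_U$, applying the factor-map identities $k \circ j_{U^0} = j_U$ and $k([f]_{U^0}) = j_U(f)(\kappa)$ correctly, and verifying that non-constancy of $g$ propagates through the injective enumeration to $e$ so that normality forces $[e]_{U^0} \geq \kappa$.
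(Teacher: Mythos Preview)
Your proof is correct and follows essentially the same route as the paper: represent $B$ via an index function $g \colon \kappa \to \kappa^+$, deduce non-constancy from $B \notin \rng(j_U)$, and invoke the strong-witness hypothesis. The only difference is that the paper skips your factorization $g = h \circ e$ and applies the hypothesis directly to the sequence $\langle A_{g(\xi)} \mid \xi < \kappa\rangle$ at the index $\zeta = \kappa$, using $j_U(g)(\kappa) = k([g]_{U^0}) = \beta$; your extra step of passing to an injective enumeration is more careful about the ``size $\kappa$'' clause but is not strictly needed.
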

\begin{proof} Let $B$ be as in the statement of the lemma. Pick $A'\subseteq j_{U^0}(\kappa)$ such that $k(A')=B$. Then $A' \not \in \rng(j_{U^0})$, since otherwise its image $B$ will be in the 
range of $j_U=k\circ j_{U^0}$. Denote by
$$\{A'_{\nu}\mid \nu<j_{U^0}(\kappa^+)\}=j_{U^0}(\{A_i\mid i<\kappa^+\})$$  $$\{A''_{\nu}\mid \nu<j_{U}(\kappa^+)\}=j_{U}(\{A_i\mid i<\kappa^+\})$$

Since $U^0$ is normal, there is $f:\kappa\rightarrow \kappa^+$ such that $A'=A'_{j_{U^0}(f)(\kappa)}$ and thus $$B=k(A')=k(A'_{{j_{U^0}(f)(\kappa)}})=A''_{j_{U}(f)(\kappa)}.$$
Since $B$ is not in the range of $k$, $f$ is not constant. Recall that $\{ A_\alpha \mid \alpha<\kappa^+\}$ is a strong witness for $U$  being non-Galvin ultrafilter over $\kappa$.
Apply this to the family $\{ A_{f(\nu)} \mid \nu<\kappa \}.$
It follows that $[id]_U\not \in A''_{j_{U}(f)(\kappa)}=B$\end{proof}

Before proving the main result of this section we present two preservation theorems for being a strong witnesses for the failure of the Galvin property. These theorems are not used later and the reader can proceed directly to Theorem \ref{universe theorem}.
\begin{theorem}
Assume $2^\kappa=\kappa^+$.
Suppose that the family $\{ A_\alpha \mid \alpha<\kappa^+\}$ is a strong witness for $U$ being a non-Galvin ultrafilter over $\kappa$.
Let $U^0=\{X\subseteq \kappa \mid \kappa\in j_U(X)\}$ be a projection of $U$ to a normal ultrafilter, $\nu\mapsto \pi_{nor}(\nu)$ a projection map  and 
$k: M_{U^0}\to M_U$ the corresponding elementary embedding. Assume that $crit(k)=j_{U^0}(\kappa)$ and $[id]_U=j_{U^0}(\kappa)$.
\\Suppose that $V^*$ is an extension of $V$ in which all the embeddings $j_{U^0},j_U, k$ extend to an elementary  embedding $j^{0*}:V^*\to M^{0*},j^*:V^*\to M^*$,
$k^*:M^{0*}\to M^*$. Define $U^*=\{X \subseteq \kappa\mid [id]_U \in j^*(X)\}$.
\\Then $\{ A_\alpha \mid \alpha<\kappa^+\}$ is a strong witness that $U^*$ is a non-Galvin ultrafilter over $\kappa$.

\end{theorem}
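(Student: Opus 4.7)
The plan is to transfer the strong-witness property from $U$ to $U^*$ via the canonical factor map
\[
\sigma : M_{U^*} \to M^*, \qquad \sigma([g]_{U^*}) = j^*(g)([id]_U),
\]
which satisfies $\sigma \circ j_{U^*} = j^*$, $\sigma\restriction\kappa = id$, $\sigma(\kappa)\geq\kappa$, and $\sigma([id]_{U^*}) = [id]_U$, and then to finish by invoking the preceding lemma applied to $U$.

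Membership $\{A_\alpha\mid\alpha<\kappa^+\}\subseteq U^*$ is immediate because $j^*\restriction V = j_U$, hence $[id]_U\in j_U(A_\alpha) = j^*(A_\alpha)$. Fix a size-$\kappa$ subfamily, coded by an injective $f^*\colon\kappa\to\kappa^+$ in $V^*$ with $s^* := \langle A_{f^*(\xi)}\mid\xi<\kappa\rangle$, and fix $\zeta\in[\kappa,[id]_{U^*})$. Set $\eta := \sigma(\zeta)$; order preservation and the properties of $\sigma$ above give $\eta\in[\kappa,[id]_U)$. Elementarity of $\sigma$ then reduces the required conclusion $[id]_{U^*}\notin j_{U^*}(s^*)(\zeta)$ to
\[
[id]_U \notin j^*(s^*)(\eta) = j_U(A)(\beta),
\]
where $A$ is the sequence $\alpha\mapsto A_\alpha$ and $\beta := j^*(f^*)(\eta)$.

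The core step is to place $j_U(A)(\beta)$ in $\rng(k)\setminus\rng(j_U)$, since then the preceding lemma delivers $[id]_U\notin j_U(A)(\beta)$. Using $j^* = k^*\circ j^{0*}$ together with $\eta<\mathrm{crit}(k^*) = [id]_U$, one gets $\beta = k^*(j^{0*}(f^*))(\eta) = k^*(\delta)$ where $\delta := j^{0*}(f^*)(\eta)$. Being an ordinal below $j^{0*}(\kappa^+) = j_{U^0}(\kappa^+)$, $\delta$ lies in $M_{U^0}$, and the hypothesis $k^*\restriction M_{U^0} = k$ yields $\beta = k(\delta)$, whence
\[
j_U(A)(\beta) = k(j_{U^0}(A))(k(\delta)) = k\bigl(j_{U^0}(A)(\delta)\bigr) \in \rng(k).
\]
To rule out $j_U(A)(\beta)\in\rng(j_U)$, suppose for contradiction $\delta = j_{U^0}(\alpha)$ for some $\alpha<\kappa^+$. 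Injectivity of $f^*$ forces $f^{*-1}(\{\alpha\})$ to be empty or a singleton $\{\xi\}$ with $\xi<\kappa$; applying $j^{0*}$ and using $j^{0*}\restriction\kappa = id$, the set $(j^{0*}(f^*))^{-1}(\{j^{0*}(\alpha)\}) = j^{0*}(f^{*-1}(\{\alpha\}))$ is empty or $\{\xi\}$, neither of which contains $\eta\geq\kappa$. Thus $\delta\notin\rng(j_{U^0})$; injectivity of $\alpha\mapsto A_\alpha$ (guaranteed by $|\{A_\alpha\}| = \kappa^+$) lifts via $j_{U^0}$ to $j_{U^0}(A)(\delta)\notin\rng(j_{U^0})$, and injectivity of $k$ gives $k(j_{U^0}(A)(\delta))\notin\rng(j_U)$. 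The preceding lemma then supplies $[id]_U\notin j_U(A)(\beta)$, which via $\sigma$ is the desired $[id]_{U^*}\notin j_{U^*}(s^*)(\zeta)$.

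The main obstacle I expect is verifying the precise behaviour of the three lifted embeddings in $V^*$ — specifically $\mathrm{crit}(k^*) = [id]_U$, $k^*\restriction M_{U^0} = k$, $j^{0*}\restriction V = j_{U^0}$, and the factorisation $j^* = k^*\circ j^{0*}$ — which drive both the key identity $\beta = k(\delta)$ and the reduction above. These properties ought to fall out of the extension hypothesis on $j^{0*},j^*,k^*$ and the identification $[id]_U = j_{U^0}(\kappa) = \mathrm{crit}(k)$, but they must be checked carefully, because the whole argument hinges on $\eta$ and $\delta$ being strictly below the critical point of $k^*$.
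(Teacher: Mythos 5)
Your proof is correct and follows essentially the same route as the paper: the paper's own argument is a two-line sketch that reduces everything to the preceding lemma together with the observation that $[\kappa,[id]_U)\subseteq\rng(k)\setminus\rng(j_U)$, and your write-up is a careful elaboration of exactly that reduction (via $j^*=k^*\circ j^{0*}$, the index $\eta<\mathrm{crit}(k^*)$ lying outside $\rng(j^{0*})$, and the factor map $\sigma$ which the paper suppresses). The only ingredient the paper makes explicit that you leave implicit is the preliminary observation that $(\kappa^+)^{V^*}=(\kappa^+)^V$, but your argument does not actually depend on it since $f^*$ already maps into $(\kappa^+)^V$.
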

\begin{proof}
Note that $(\kappa^+)^{V^*}=(\kappa^+)^V$. Just otherwise, $(\kappa^{++})^V $ will be $\leq (\kappa^+)^{V^*}$, and then, $j^*(\kappa)>(\kappa^{++})^V$.
This is impossible, since $j^*$ extends $j_U$. The rest follows from the previous lemma and the fact that $[\kappa, [id]_U)\subseteq\rng(k)\setminus\rng(j_{U})$ since $crit(k)=j_{U^0}(\kappa)=[id]_U$. 
\end{proof}

\begin{theorem}
Assume $2^\kappa=\kappa^+$.
Suppose that $\{ A_\alpha \mid \alpha<\kappa^+\}$ is a strong witness for $U$ being a non-Galvin ultrafilter over $\kappa$ which contains $Cub_\kappa$ and  be a witnessing family.
\\Let $V^*$ be a $\kappa-$c.c. extension of $V$ in which $j_U$ extends to an elementary  embedding $j^*:V^*\to M^*$, where $M^*$ is a corresponding extension of $M_U$.
\\Define $U^*=\{X \subseteq \kappa\mid [id]_U \in j^*(X)\}$.
\\Then $\{ A_\alpha \mid \alpha<\kappa^+\}$ is a strong witness that $U^*$ is a non-Galvin ultrafilter over $\kappa$.

\end{theorem}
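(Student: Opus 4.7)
My plan is to mimic the strategy of the preceding preservation theorem, but since only $j_U$ (not a normal factor) is assumed to extend, I will use the $\kappa$-c.c.\ and $Cub_\kappa\subseteq U$ to construct the needed ground-model data by hand. I first collect the routine facts. By $\kappa$-c.c., $(\kappa^+)^{V^*}=(\kappa^+)^V$, every club of $\kappa$ in $V^*$ contains a $V$-club, and thus $Cub_\kappa^{V^*}\subseteq U^*$ (using $Cub_\kappa\subseteq U$). For $X\in U$ we have $[id]_U\in j_U(X)=j^*(X)$, so $U\subseteq U^*$, and $U^*$ is a $\kappa$-complete uniform ultrafilter on $\kappa$ in $V^*$. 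Let $k^*\colon M_{U^*}\to M^*$ be the factor map $k^*([h]_{U^*})=j^*(h)([id]_U)$; it is elementary with $k^*\circ j_{U^*}=j^*$, $k^*\restriction\kappa=\mathrm{id}$, and $k^*([id]_{U^*})=[id]_U$, so by strict monotonicity $k^*$ maps $[\kappa,[id]_{U^*})$ into $[\kappa,[id]_U)$.

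Fix an injective $f^*\colon\kappa\to\kappa^+$ in $V^*$ (specifying the subfamily) and $\zeta^*=[h^*]_{U^*}\in[\kappa,[id]_{U^*})$, and set $F(\alpha):=A_\alpha$. Applying $k^*$ and elementarity, $[id]_{U^*}\notin A'_{f^*(\zeta^*)}$ is equivalent to $[id]_U\notin j_U(F)(\mu^*)$, where $\mu^*:=j^*(f^*\circ h^*)([id]_U)=k^*([f^*\circ h^*]_{U^*})\in j_U(\kappa^+)$. A preliminary observation is $\mu^*\notin\rng(j_U\restriction\kappa^+)$: if $\mu^*=j_U(\beta)=k^*(j_{U^*}(\beta))$, then injectivity of $k^*$ gives $[f^*\circ h^*]_{U^*}=j_{U^*}(\beta)$, so $f^*(h^*(\xi))=\beta$ on a $U^*$-set; by injectivity of $f^*$ this forces $h^*(\xi)=(f^*)^{-1}(\beta)<\kappa$ on a $U^*$-set, contradicting $[h^*]_{U^*}\geq\kappa$.

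The main step is to realize $\mu^*=j_U(g)(\nu^*)$ for an injective $g\in V$, $g\colon\kappa\to\kappa^+$, and $\nu^*\in[\kappa,[id]_U)$; the strong witness of $U$ in $V$ applied to the size-$\kappa$ subfamily $\{A_{g(\tau)}:\tau<\kappa\}$ and to $\nu^*$ then yields the claim. By $\kappa$-c.c., for each $\xi<\kappa$ fix $B_\xi\in V$ with $|B_\xi|<\kappa$ and $(f^*\circ h^*)(\xi)\in B_\xi$, and set $S_\xi:=\bigcup_{z\leq\xi}B_z\in V$. The set $C':=\{\xi<\kappa:|B_z|<\xi\text{ for all }z\leq\xi\}$ is a club of $\kappa$ in $V$, so $C'\in U^*$ and $|S_\xi|\leq\xi$ for $\xi\in C'$. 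Let $B:=\bigcup_\xi B_\xi$ (of $V$-cardinality $\kappa$; else $\kappa$-completeness of $U^*$ forces $f^*\circ h^*$ constant on a $U^*$-set, contradicting $[h^*]_{U^*}\geq\kappa$ as above) and pick in $V$ a bijection $g\colon\kappa\to B\subseteq\kappa^+$ such that $g[|S_\xi|]=S_\xi$ for every $\xi$. Set $\tilde h:=g^{-1}\circ f^*\circ h^*\in V^*$, so $f^*\circ h^*=g\circ\tilde h$ and $\mu^*=j_U(g)(\nu^*)$ with $\nu^*:=k^*([\tilde h]_{U^*})$. Since $(f^*\circ h^*)(\xi)\in S_\xi$ and $g[|S_\xi|]=S_\xi$, we have $\tilde h(\xi)<|S_\xi|\leq\xi$ for all $\xi\in C'$, so $[\tilde h]_{U^*}<[id]_{U^*}$. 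For unboundedness, $\{\xi:\tilde h(\xi)\leq\alpha\}=(h^*)^{-1}((f^*)^{-1}(g[\alpha+1]))$ is the preimage under $h^*$ of a bounded subset of $\kappa$ of size $\leq\alpha+1$ (using $f^*$ injective), hence not in $U^*$ since $h^*$ is unbounded mod $U^*$; thus $[\tilde h]_{U^*}\geq\kappa$. We conclude $\nu^*\in[\kappa,[id]_U)$, and the strong witness of $U$ in $V$ finishes the proof. The main subtlety is the enumeration $g$: it must be chosen so that the ground-model club $C'$ (available thanks to $Cub_\kappa\subseteq U^*$) and the $\kappa$-c.c.\ covers $B_\xi$ jointly force $\tilde h$ to be both regressive and unbounded mod $U^*$.
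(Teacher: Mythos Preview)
Your argument follows the same strategy as the paper: cover the $V^*$-index function by small ground-model sets via $\kappa$-c.c., enumerate their union by a $V$-bijection $g:\kappa\to B$, use a ground-model club (and $Cub_\kappa\subseteq U$) to bound the re-indexed ordinal below $[id]_U$, and apply the strong witness for $U$ to $\{A_{g(\tau)}:\tau<\kappa\}$. Your explicit handling of the factor map $k^*:M_{U^*}\to M^*$ and of the inequality $\nu^*\geq\kappa$ is in fact more careful than the paper's somewhat terse ``Clearly, $\gamma\geq\kappa$''.

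There is, however, one genuine slip. A bijection $g$ with $g[\,|S_\xi|\,]=S_\xi$ for \emph{every} $\xi$ need not exist: take $B_0=\omega$ and $B_1=\{\omega\}$; then $|S_0|=|S_1|=\omega$ but $S_0\subsetneq S_1$, so $g[\omega]$ cannot equal both. The underlying problem is that you cover $(f^*\circ h^*)(\xi)\in B_\xi$, so after passing to $M^*$ the relevant covering set sits at index $[id]_U$ itself, where the club hypothesis does not bite. The easy fix---and this is precisely what the paper does---is to cover $f^*(\xi)$ rather than $(f^*\circ h^*)(\xi)$: choose $B_\xi\in V$, $|B_\xi|<\kappa$, with $f^*(\xi)\in B_\xi$, take any bijection $g:\kappa\to B$, and use the club $C=\{\nu<\kappa:\forall\xi<\nu,\ g^{-1}[B_\xi]\subseteq\nu\}\in U$. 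Now put $\xi:=k^*(\zeta^*)\in[\kappa,[id]_U)$; since $\xi<[id]_U\in j_U(C)$, the set $j_U(\vec B)(\xi)$ has all its $j_U(g)$-indices below $[id]_U$, whence $\nu^*:=j_U(g)^{-1}(\mu^*)<[id]_U$. Your argument for $\nu^*\geq\kappa$ (via $\mu^*\notin\rng(j_U)$ and injectivity of $f^*$) then goes through unchanged.
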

\begin{proof}
Suppose now that  $\l A_{\alpha_\xi} \mid \xi<\kappa\r$ is a subfamily of $\{ A_\alpha \mid \alpha<\kappa^+\}$ of size $\kappa$ in ${V^*}$.
\\Work in $V$. Let $\lusim{\alpha}_\xi$ be a name of $\alpha_\xi$.
By $\kappa-$c.c., then for every $\xi<\kappa$ there will be $s_\xi\subseteq \kappa^+$ of cardinality less than $\kappa$, such that $\Vdash \lusim{\alpha}_\xi\in s_\xi$.
\\Let $S=\sup_{\xi<\kappa}s_\xi$. Enumerate $S=\l \beta_i\mid i<\kappa\r$ such that we if $\beta_i\in s_\zeta$ and $\beta_j\in s_\mu$ where $\zeta<\mu$ then $i<j$ i.e. enumerate first $s_0$ then $s_1$ and so on, such that the resulting enumeration of $S$ is of order-type $\kappa$. This is possible since each. $s_\zeta$ has cardinality less than $\kappa$.
Define
$$C=\{\nu<\kappa \mid \forall \xi<\nu (\sup(\gamma\mid \beta_\gamma\in s_\xi)<\nu)\}.$$
Clearly, $C$ is a club.
 Hence $[id]_U\in j_U(C)$. Then, by elementarity, for every $\zeta<[id]_U$, and every $\beta_i\in s'_\zeta$, $i<[id]_U$.

Let us use the fact the the sequence $\l A_\alpha\mid \alpha<\kappa^+\r$ is a strong witness for $U$ being non-Galvin, hence $[id]_U \not \in A'_{\beta_\zeta}$, for every  $\kappa\leq \zeta< [id]_U$.
Fix any $\kappa\leq\xi<[id]_U$, then by elementarity we have $\Vdash \lusim{\alpha}'_\xi\in s'_{\xi}$  in $M_U$. Therefore there is some $\gamma<\kappa$ such that $\alpha'_\xi=\beta_\gamma$. Clearly, $\gamma\geq\kappa$, and by the closure property of $[id]_U$, we conclude that $\gamma<[id]_U$. 
Hence, in $M^*$, $[id]_U \not \in A'_{\beta'_\gamma}=A'_{\alpha'_\xi}$, as wanted.
\end{proof}

\begin{theorem}\label{universe theorem}
 Assume GCH and let $\kappa$ be measurable in $V$. Then there is a cofinality preserving forcing extension $V^*$ in which there is a $\kappa$-complete ultrafilter $W$ over $\kappa$ which concentrates on regulars, extends $Cub_\kappa$, and has a strong witness for the failure of Galvin's property.
\end{theorem}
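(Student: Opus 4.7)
I propose a Hamkins-style lottery-preparation iteration (using Definition~\ref{lottery}) topped by a Cohen forcing, combined with a lift of the ground-model embedding $j_0:V\to M_0=\mathrm{Ult}(V,U_0)$ coming from a normal measure $U_0$ on $\kappa$. Concretely, let $\mathbb{P}=\l\mathbb{P}_\alpha,\dot{\mathbb{Q}}_\alpha\mid\alpha\le\kappa\r$ be an Easton-support iteration with $\dot{\mathbb{Q}}_\alpha$ the lottery sum of trivial forcing and $\Add(\alpha,\alpha^+)$ at each inaccessible $\alpha<\kappa$, and $\dot{\mathbb{Q}}_\kappa=\Add(\kappa,1)$, which we regard as adding a generic $g:\kappa\times\kappa\to 2$. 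Standard arguments give that $\mathbb{P}$ is $\kappa^+$-cc and preserves cofinalities and GCH; let $V^*=V[G*g]$. In $M_0$, $j_0(\mathbb{P})$ agrees with $\mathbb{P}$ through stage $\kappa$ and includes $\Add(\kappa,1)^{M_0}=\Add(\kappa,1)^{V^*}$ as a component of its stage-$\kappa$ lottery; pick that branch and interpret it by $g$, and build a generic $H$ for the (sufficiently closed) tail of $j_0(\mathbb{P})$ over $M_0[G*g]$ by a $\kappa^+$-length diagonal construction in $V^*$ (using GCH to bound the dense subsets by $\kappa^+$). This lifts $j_0$ to $j^*:V^*\to M^*$.

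\textbf{Defining $W$ and the witness family.} Pick $\lambda\in(\kappa^+,j^*(\kappa))$ that is simultaneously a closure point of every $V^*$-club below $j^*(\kappa)$ (so $W:=\{X\subseteq\kappa\mid\lambda\in j^*(X)\}$ extends $Cub_\kappa^{V^*}$) and an $M^*$-regular cardinal (so $W$ concentrates on regulars); the preparation supplies enough ``indiscernible'' ordinals below $j^*(\kappa)$ for such $\lambda$ to exist. Let $U^0=\{X\subseteq\kappa\mid\kappa\in j^*(X)\}$ be the normal projection of $W$ with factor $k:M_{U^0}\to M_W$, so that $crit(k)=[id]_W=\lambda$. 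Define
\[
A_\alpha=\{\nu<\kappa\mid g(\nu,\tau_\alpha(\nu))=1\}\qquad(\alpha<\kappa^+),
\]
and during the construction of $H$ install the master condition $p_0$ forcing $j^*(g)(\lambda,j^*(\tau_\alpha)(\lambda))=1$ for every $\alpha<\kappa^+$. By Proposition~\ref{properties of canonical functions}(1) the ordinals $j^*(\tau_\alpha)(\lambda)$ are pairwise distinct across $\alpha<\kappa^+$, so $|p_0|=\kappa^+<j_0(\kappa)$ and $p_0$ is a valid condition in the $j_0(\kappa)$-closed Cohen component of $j_0(\mathbb{P})$; this places every $A_\alpha$ into $W$.

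\textbf{Strong witness and main obstacle.} Fix $f:\kappa\to\kappa^+$ injective in $V^*$ and $\zeta\in[\kappa,\lambda)$. Since $\zeta<crit(k)$, $j_W(f)(\zeta)=k(j_{U^0}(f)(\zeta))$, and a \L o\'s-type computation reduces $[id]_W\in A'_{j_W(f)(\zeta)}$ to the statement $j^*(g)(\lambda,j^*(\tau)_{j^*(f)(\zeta)}(\lambda))=1$. The crucial combinatorial input is that $j^*(f)(\zeta)\ne j^*(\alpha)$ for every $\alpha<\kappa^+$: if they coincided then, picking $h:\kappa\to\kappa$ with $[h]_{U^0}=\zeta$, we would have $[f\circ h]_{U^0}=j_{U^0}(\alpha)$, forcing $f\circ h=\alpha$ on a $U^0$-large set, hence $h$ constant there by injectivity of $f$, contradicting $\zeta\ge\kappa$. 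Then Proposition~\ref{properties of canonical functions}(1) gives $j^*(\tau)_{j^*(f)(\zeta)}(\lambda)\ne j^*(\tau_\alpha)(\lambda)$ for every $\alpha<\kappa^+$ (once $\lambda$ lies above the relevant bounded exceptional sets), so the coordinate in question is not pinned by $p_0$, and genericity of $H$ forces value $0$ there, yielding $[id]_W\notin A'_{j_W(f)(\zeta)}$ and the strong witness property. The main obstacle is the simultaneous bookkeeping during the $\kappa^+$-length construction of $H$: beyond meeting the $\kappa^+$ dense subsets of the tail and installing $p_0$, one must force value $0$ at the $\kappa^+$-many ``forbidden'' coordinates coming from all injective $f\in V^*$ (only $\kappa^+$ of these by GCH); disjointness of the diagonal and forbidden coordinate sets (via Proposition~\ref{properties of canonical functions}(1) and the choice of $\lambda$) plus the $j_0(\kappa)$-closure of the Cohen forcing in $M_0$ make this a standard diagonal construction.
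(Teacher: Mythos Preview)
Your single-ultrapower approach has a fatal obstruction at the very first step, before any of the bookkeeping with $g$ and the canonical functions can get off the ground. You take $j^*:V^*\to M^*$ to be the lift of the normal ultrapower $j_0=j_{U_0}$; standard arguments (and the paper spells this out) show that $j^*$ is then exactly the ultrapower by the derived normal measure $U^0=\{X:\kappa\in j^*(X)\}$, so $M^*=M_{U^0}$. Now suppose $\lambda\in(\kappa,j^*(\kappa))$ and let $f:\kappa\to\kappa$ satisfy $[f]_{U^0}=\lambda$; then $f(\nu)>\nu$ on a $U^0$-large set. Let $C=\{\mu<\kappa: f''\mu\subseteq\mu\}$ be the club of closure points of $f$. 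If $\lambda\in j^*(C)$, then on a $U^0$-large set we have both $\nu<f(\nu)$ and $f''f(\nu)\subseteq f(\nu)$, which gives $f(\nu)<f(\nu)$, a contradiction. Hence no $\lambda>\kappa$ in $M_{U^0}$ satisfies $\lambda\in j^*(C)$ for every club $C$, and your $W$ can never extend $Cub_\kappa$. The sentence ``the preparation supplies enough `indiscernible' ordinals below $j^*(\kappa)$ for such $\lambda$ to exist'' is simply false.

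This also explains why the paper passes to the \emph{second} ultrapower. There one defines $W$ via the seed $\kappa_1=j_1(\kappa)$ inside $M_2$, and the point is that $j_2^*(C)\cap\kappa_1=j_1^*(C)$ (because $crit(k^*)=\kappa_1$) is unbounded in $\kappa_1$, so $\kappa_1\in j_2^*(C)$ for every club $C$; moreover $\kappa_1$ is regular (indeed measurable) in $M_2^*$. Relatedly, your claimed factor map $k:M_{U^0}\to M_W$ with $crit(k)=\lambda$ does not exist in your setup: since $W$ is derived from $j^*=j_{U^0}$ at seed $\lambda$, the natural factor map goes the other way, $M_W\to M_{U^0}$; a map $M_{U^0}\to M_W$ with critical point $\lambda$ is precisely what the second ultrapower provides and what a single ultrapower cannot. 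Finally, ``genericity of $H$ forces value $0$'' is not how generics work: you would need to install those values by hand, and the disjointness you invoke is stated but not proved. But the core problem is structural: you must replace the single lift of $j_0$ by an iterated ultrapower (or an equivalent device producing a seed strictly above $\kappa$ that is simultaneously a critical point of a further embedding), as the paper does.
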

\begin{proof}
The forcing is simply adding for each inaccessible $\alpha\leq\kappa$, $\alpha^+$-many Cohen functions to $\alpha$. Namely, consider the Easton support iteration $$\l\mathcal{P}_\alpha,\lusim{Q}_\beta\mid \alpha\leq\kappa+1, \beta\leq\kappa\r$$ such that for $\alpha\leq\kappa$, $\lusim{Q}_\alpha$ is trivial unless $\alpha$ is inaccessible, in which case it is a $\mathcal{P}_\alpha$-name for $\Add(\alpha,\alpha^+)$. 

Let $G:=G_\kappa*g_\kappa$ be $V$-generic for $\mathcal{P}_\kappa*\lusim{Q}_\kappa$. Denote $\l f_{\kappa,\alpha}\mid \alpha<\kappa^+\r$ be the enumeration of the $\kappa^+$ Cohen functions added by $g_\kappa$. 
The idea is that the sets which are going to be a strong witness for the failure of the Galvin property are $\l A_\alpha\mid \alpha<\kappa^+\r$, where
$$A_\alpha=\{\beta<\kappa\mid f_\alpha(\beta)=1\}.$$

The next step is to construct the measure for this witness by extending ground model embeddings to $V[G]$.
 Let $U\in V$ be a normal measure over $\kappa$ and consider the second ultrapower by $U$ and the corresponding commutative diagram  
 $$j_1:=j_{U}:V\rightarrow M_U=:M_1, \ j_2:=j_{U^2}:V\rightarrow M_{U^2}=:M_2$$
 $$ k:M_1\rightarrow M_{2}, \ j_{2}=k\circ j_1$$
 where $k$ is simply the ultrapower embedding defined in $M_U$ using the ultrafilter $j_{1}(U)$. Denote $\kappa_1=j_1(\kappa)$ and $\kappa_2=j_{2}(\kappa)$, then $k(\kappa_1)=\kappa_2$.
 
 By Easton support and elementarity, $$j_1(\mathcal{P}_\kappa*\lusim{Q}_\kappa)=\mathcal{P}_\kappa*\lusim{Q}_\kappa*\mathcal{P}_{(\kappa,\kappa_1)}*\lusim{Q}_{\kappa_1}.$$ 
 Where $\mathcal{P}_{(\kappa,\kappa_1)}*\lusim{Q}_{\kappa_1}$ is the quotient forcing above $\kappa$, which is forcing equivalent to the continuation of the iteration above $\kappa$ using the same recipe as $\mathcal{P}_\kappa$.
 
In $V[G]$, let us first construct an $M$-generic filter for $j_1(\mathcal{P}_\kappa*\lusim{Q}_\kappa)$. Take $G_{\kappa}*g_{\kappa}$ to be the generic up to $\kappa$ including $\kappa$. Above $\kappa$, from the point of view of $V[G]$, we have $\kappa^+$-closure for   $\mathcal{P}_{(\kappa,\kappa_1)}$. By $GCH$, and since $j_1$ is an ultrapower by a measure, there are only $\kappa^+$-many dense open subsets of this forcing to meet. Therefore we can construct in $V[G]$ by standard construction an $M_1[G]$-generic filter $G_{(\kappa,\kappa_1)}$ for $\mathcal{P}_{(\kappa,\kappa_1)}$. By $\kappa_1^+-cc$ of $Q_{\kappa_1}$, we can find $g'_{\kappa_1}$  which is $M_1[G*G_{(\kappa,\kappa_1)}]-$generic for $Q_{\kappa_1}$. 
    We need to change the values of $g'_{\kappa_1}=\l f'_{\kappa_1,\alpha}\mid\alpha<\kappa_1^{+}\r$ to $g_{\kappa_1}=\l f_{\kappa_1,\alpha}\mid \alpha<\kappa_1^{+}\r$ such that for every $\alpha<\kappa^{+}$, $f_{\kappa_1,j_1(\alpha)}\restriction\kappa=f_{\kappa,\alpha}$. This will ensure that the
Silver criterion to lift an elementary embedding holds, namely, $j_1''G_{\kappa}*g\subseteq G_{\kappa}*g*G_{(\kappa,\kappa_1)}*g'_{\kappa_1}$. Also, we would like to tweak the values of $f_{\kappa_1,j_1(\alpha)}(\kappa)$ to ensure that the sets $A_\alpha$ are members of the ultrafilter generated by $\kappa$. By the definition of $A_\alpha$, the way to do this is to set $f_{\kappa_1,j_1(\alpha)}(\kappa)=1$. 

Formally, for each condition $p\in \Add(\kappa_1,\kappa_1^+)^{M_1[G_{\kappa}*G*G_{(\kappa,\kappa_1)}]}$,  define a function $p^*$ with $\dom(p^*)=\dom(p)$ and for every $\l \gamma,\alpha\r\in\dom(p^*)$,
$$p^*(\l\gamma,\alpha\r)=\begin{cases} f_{\kappa,\beta}(\gamma) & \gamma<\kappa\wedge j_1(\beta)=\alpha\\
1 & \gamma=\kappa\wedge j_1(\beta)=\alpha\\
p(\l\gamma,\alpha\r) & else\end{cases}.$$ Let $g_{\kappa_1}:=\{p^*\mid p\in g'_{\kappa_1}\}$. Clearly, the functions $\l f_{\kappa_1,\alpha}\mid\alpha<\kappa_1^+\r$ derived from $g_{\kappa_1}$ satisfy that $f_{\kappa_1,j_1(\beta)}\restriction\kappa=f_{\kappa,\beta}$ and $f_{\kappa_1,j_1(\beta)}(\kappa)=1$ for every $\beta<\kappa^+$. It remains to show that $g_{\kappa_1}$ is generic:
\begin{lemma}\label{less thank kappa many changes} The filter
$g_{\kappa_1}$ is $\Add(\kappa_1,\kappa_1^+)^{M_1[G_{\kappa}*G*G_{(\kappa,\kappa_1)}]}$-generic filter over $M_1[G_{\kappa}*g*G_{(\kappa,\kappa_1)}]$.
\end{lemma}
\begin{proof}
 First let us prove that $g_{\kappa_1}\subseteq \Add(\kappa_1,\kappa_1^+)^{M_1[G_{\kappa}*G*G_{(\kappa,\kappa_1)}]}$. Indeed, $g'_{\kappa_1}\subseteq \Add(\kappa_1,\kappa_1^+)^{M_1[G_{\kappa}*G*G_{(\kappa,\kappa_1)}]}$ and for any $p\in g'_{\kappa_1}$, $$M_1[G_{\kappa}*G*G_{(\kappa,\kappa_1)}]\models|p|<\kappa_1,$$ hence $\dom(p)_{\leq\kappa}:=\{\alpha\mid\exists\l\gamma,\alpha\r\in\dom(p),\gamma\leq\kappa\}$ is bounded in $\kappa_1^+$ while $j_1''\kappa^+$ is unbounded. It follows that there is $\theta<\kappa^+$ such that $$\dom(p)_{\leq\kappa}\cap j_1''\kappa^+\subseteq j_1''\theta.$$ Hence from the $V$-perspective, $|\dom(p)_{\leq\kappa}\cap j_1''\kappa^+|\leq\kappa$. The difference between $p$ and $p^*$ is only on the coordinates of $\dom(p)_{\leq\kappa}\cap j_1''\kappa^+$ and by closure of $M_1[G_{\kappa}*g*G_{(\kappa,\kappa_1)}]$ to $\kappa$-sequences it follows that $$p^*\in \Add(\kappa_1,\kappa_1^+)^{M_1[G_{\kappa}*G*G_{(\kappa,\kappa_1)}]}, \ g_{\kappa_1}\subseteq\Add(\kappa_1,\kappa_1^+)^{M_1[G_{\kappa}*G*G_{(\kappa,\kappa_1)}]}.$$
 
 To see that $g_{\kappa_1}$ is generic over $M_1[G_{\kappa}*G*G_{(\kappa,\kappa_1)}]$, let $D\in M_1[G_{\kappa}*G*G_{(\kappa,\kappa_1)}]$ be dense open. In $M_1[G_{\kappa}*G*G_{(\kappa,\kappa_1)}]$, define $D^*$ to consist of all conditions 
 $p\in \Add(\kappa_1,\kappa_1^+)$. Such that $$\forall q. \dom(q)=\dom(p)\wedge |\{x\mid p(x)\neq q(x)\}|\leq \kappa\rightarrow q\in D$$
 then $D^*$ is dense open. To see this, pick any $p\in \Add(\kappa_1,\kappa_1^+)^{M_1[G_{\kappa}*G*G_{(\kappa,\kappa_1)}]}$ and enumerate by $\l q_r\mid r<\theta\r$ all the conditions $q$ such that  $$\dom(q)=\dom(p)\wedge |\{x\mid p(x)\neq q(x)\}|\leq\kappa.$$ Note $\theta<\kappa_1$ since $\kappa_1$ is inaccessible in $ M_U[G_{\kappa}*g*G_{(\kappa,\kappa_1)}]$. 
 We define inductively and increasing sequence $\l p_r\mid r<\theta\r$, and exploit the $\kappa_1$-closure of $\Add(\kappa_1,\kappa_1^+)$ to take care of limit stages. Define $p_0=p$, and suppose that $p_r$ is defined, let $p_{r+1}':=q_r\cup p_r\restriction(\dom(p_r)\setminus \dom(p))$,  find $p_{r+1}'\leq t_{r+1}\in D$ which exists by density and set $$p_{r+1}=p_r\restriction \dom(p)\cup t_{r+1}\restriction(\dom(t_{r+1})\setminus \dom(p)).$$ Then $p_r\leq p_{r+1}$. Let $$p^*:=\cup_{r<\theta} p_{r}$$ then $p^{*}$ has the property that for $\kappa$ many changes of $p^*$ from the domain of $p$ stays inside $D$. Namely any $q$ with $\dom(q)=\dom(p^*)$, $$q\restriction (\dom(p^*)\setminus \dom(p))=p^*\restriction(\dom(p^*)\setminus \dom(p))$$ and $|\{x\in \dom(p)\mid p(x)\neq q(x)\}|\leq\kappa$, $q\restriction \dom(p)=q_r$ for some $r$, therefore $q\geq t_{r+1}\in D$. Now we define inductively $\l p^{(r)}\mid r<\kappa^+\r$, $p^{(0)}=p$ at limit we take union, and at successor step we take $p^{(r+1)}=(p^{(r)})^*$.
 We claim that $p_*:=\cup_{r<\kappa^+}p^{(r)}\in D^*$. First note that $\kappa^+<\kappa_1$ hence $|p_*|<\kappa_1$ (all the definition is inside $M_U[G_{\kappa}*g_{\kappa}*G_{(\kappa,\kappa_1)}]$). Let $q$ be any condition with $\dom(q)=\dom(p^*)$ and denote by $$I=\{x\in \dom(p_*)\mid q(x)\neq p_*(x)\}$$ and suppose that $|I|\leq \kappa$. Since $\dom(p_*)=\cup_{r<\kappa^+}\dom(p^{(r)})$ and $\dom(p^{(r)})$ is $\subseteq$-increasing, there is $j<\kappa^+$ such that $I\subseteq \dom(p^{(j)})$. The condition $q\restriction I$ is enumerated in the construction of $p^{(j+1)}$, hence $q\restriction \dom(p^{(j+1)})\in D$ and since $D$ is open, $q\in D$. This means that $p_*\in D^*$.
 
 Finally, by genericity of $g'_{\kappa_1}$, we can find $p\in D^*\cap g'_{\kappa_1}$. By definition, $p^*\in g_{\kappa_1}$ and since $\dom(p^*)=\dom(p)$ and $|\{x\mid p(x)\neq p^*(x)\}|\leq \kappa$ it follows that $p^*\in D$.
\end{proof}

Denote by $H=G_\kappa*g_{\kappa}*G_{(\kappa,\kappa_1)}*g_{\kappa_1}$, then $j_1''G\subseteq H$. Let $$j_1^*:V[G]\rightarrow M_1[H]$$ be the extended ultrapower and derive the normal ultrafilter over $\kappa$,
$$U_1:=\{X\subseteq \kappa\mid \kappa\in j^*_1(X)\}$$
then $U\subseteq U_1$ and $j^*_1=j_{U_1}$. Indeed let $k_1:M_{U_1}\rightarrow M_1[H]$ be the usual factor map $k_1(j_{U_1}(f)(\kappa))=j^*_1(f)(\kappa)$. We will prove that $k_1$ is onto and therefore $k_1=id$. For every $A\in M_1[H]$, there is a name $\lusim{A}\in M_1$ such that $A=(\lusim{A})_H$. $M_U$ is the ultrapower by $U$, hence there is $f\in V$ such that $j_1(f)(\kappa)=\lusim{A}$. By elementarity for every $\alpha<\kappa$, $f(\alpha)$ is name. In $V[G]$ define $f^*(\alpha)=(f(\alpha))_G$, then by elementarity
$$k_1(j_{U_1}(f)(\kappa))=j^*_1(f^*)(\kappa)=(j^*_1(f)(\kappa))_{j(G)}=(j_1(f)(\kappa))_H=(\lusim{A})_H=A.$$

Denote by $M_1^*=M_1[H]$ and consider $j^*_1(U_1)\in M^*_1$. Let us now define inside $M^*_1$ an $M_{2}$-generic filter for $$j_{2}(\mathcal{P}_\kappa*\lusim{Q}_\kappa)=\mathcal{P}_{\kappa_1}*\lusim{Q}_{\kappa_1}*\mathcal{P}_{(\kappa_1,\kappa_2)}*\lusim{Q}_{\kappa_2},$$ 
in a similar fashion as $H$ was defined. First we take $H$ to be the generic for $\mathcal{P}_{\kappa_1}*\lusim{Q}_{\kappa_1}$. Note that $M_{2}$ is closed under $\kappa_1$-sequences with respect to $M_1$.  Therefore, from the $M^*_1$-point of view, $\mathcal{P}_{(\kappa_1,\kappa_2)}*\lusim{Q}_{\kappa_2}$ is $\kappa_1^+$-closed, and we can construct an $M_{2}[H]$-generic filter  $G_{(\kappa_1,\kappa_2)}*g'_{\kappa_2}\in M_1^*$ for it. We change the values of $g'_{\kappa_2}$ a bit differently from the way we changed the values of $g'_{\kappa_1}$.  If $\alpha<\kappa_1^+$ is of the form $j_1(\beta)$ let $f_{\kappa_2,k(\alpha)}(\kappa_1)=1$ (to guarantee that $A_\alpha$'s belongs to the ultrafilter generated by $\kappa_1$) and if $\alpha\in\kappa_1^+\setminus j_1''\kappa^+$
let $f_{\kappa_2,k(\alpha)}(\kappa_1)=0$. Also, we would like that $f_{\kappa_2,\kappa_1}(0)=\kappa$. Formally, for every $p\in \Add(\kappa_2,\kappa_2^+)^{M_2[H*G_{(\kappa_1,\kappa_2)}]}$,
 define $p^*$ to be a function with $\dom(p)=\dom(p^*)$ and for every  $\l\gamma,\alpha\r\in\dom(p^*)$,
 $$p^*(\l\gamma,\alpha\r)=\begin{cases} f_{\kappa_1,\beta}(\gamma)& \gamma<\kappa_1\wedge \alpha=k(\beta) \\
 1 & \gamma=\kappa_1\wedge \alpha=k(j_1(\beta))\\
 0 & \gamma=\kappa_1\wedge \alpha=k(\beta),\beta\notin j_1''\kappa^+\\
 \kappa & \gamma=0\wedge \alpha=\kappa_1\\
 p(\l\gamma,\alpha\r) & else\end{cases}.$$
 
Denote by $g_{\kappa_2}=\{p^*\mid p\in g'_{\kappa_2}\}\in V[G]$ the resulting filter. It is important that for each $p\in g'_2$, the set $$X_1:=j_{2}''\kappa^+\cap \dom(f)_{\leq\kappa_1}=\{j_2(\alpha)\mid \exists \l\gamma,j_2(\alpha)\r\in\dom(f), \gamma\leq\kappa_1\}$$ has size at most $\kappa$. This ensured that $X_1\in M_1^*$. Also, $k''\kappa_1^+$ is unbounded in $\kappa_2^+$ and conditions in $\Add(\kappa_2,\kappa_2^+)^{M_2[H*G_{(\kappa_1,\kappa_2)}]}$ have $M_{2}[H*G_{(\kappa_1,\kappa_2)}]$-cardinality less than $\kappa_2$, which guarantees that for each $p\in \Add(\kappa_2,\kappa_2^+)$, $$X_2:= k''\kappa_1^+\cap \dom(p)_{\leq\kappa_1}$$ has size at most $\kappa_1$. Note that  $p^*$  is definable in $M_1^*$ from the parameters $p,X_1,X_2\in M_1^*$, and $p^*$ differs from $p$ at most on $\kappa_1$-many values. By the closure of $M_{2}[H*G_{(\kappa_1,\kappa_2)}]$ to $\kappa_1$-sequences from $M_1^*$, $$p^*\in M_{2}[H*G_{(\kappa_1,\kappa_2)}]\text{ and  }g_{\kappa_2}\subseteq \Add(\kappa_2,\kappa_2^+)^{M_{2}[H*G_{(\kappa_1,\kappa_2)}]}.$$ The genericity argument of \ref{less thank kappa many changes} extends to the models $M_1$ and $M_{2}[H*G_{(\kappa_1,\kappa_2)}]$, hence $g_{\kappa_2}$ is $M_{2}[H*G_{(\kappa_1,\kappa_2)}]$-generic. 
Denote by $M_2^*=M_{2}[H*G_{(\kappa_1,\kappa_2)}*g_{\kappa_2}]$. It follows that $k$ can be extended (in $V[G]$) to $k^*$ and also $j_{2}$ to $j^*_2=k^*\circ j^*_1:V[G]\rightarrow M^*_2$. Finally, let $$W:=\{X\in P^{V[G]}(\kappa)\mid \kappa_1\in j^*_2(X)\}\in V[G].$$
Let us prove that $W$ witnesses the theorem: 
\begin{claim}\label{final claim}
$W$ is a $\kappa$-complete ultrafilter over $\kappa$ such that:
\begin{enumerate}
    \item $j_W=j^*_2$ and $[id]_W=\kappa_1$.
    \item  $Cub_\kappa\subseteq W$.
    \item $\{\alpha<\kappa\mid cf(\alpha)=\alpha\}\in W$.
    \item $\l A_\alpha\mid \alpha<\kappa^+\r$ is a strong witness for the failure of the Galvin property. 
\end{enumerate}
\end{claim}
\begin{proof}
 To see (1), let us denote by $j_W:V[G]\rightarrow M_W$ the ultrapower embedding by $W$ and $k_W:M_W\rightarrow M^*_2$ defined by $k_W([f]_W)=j^*_2(f)(\kappa_1)$ the factor map satisfying $k_W\circ j_W=j^*_2$. Let us argue that $k_W$ is onto and therefore $k_W=id$ and $[id]_W=\kappa_1$. Indeed, let $A\in M_2^*$ then there is $\lusim{A}\in M_2$ such that $(\lusim{A})_{j_2^*(G)}=A$. Since $j_2=j_{U^2}$ there is $h\in V$ such that $j_2(h)(\kappa,\kappa_1)=\lusim{A}$. Note that $\kappa=j^*_2(f_\kappa)_{\kappa_1}(0)$, hence define in $V[G]$, $h^*(\alpha)=(h(f_{\kappa,\alpha}(0),\alpha))_G$. We have that
 $$k_W([h^*]_W)=j_2^*(h^*)(\kappa_1)=(j_2(h)(\kappa,\kappa_1))_{j_2^*(G)}=(\lusim{A})_{j_2^*(G)}=A$$
 To see $(2)$, for every club $C\in Cub_\kappa$, $j^*_2(C)$ is closed and $j^*_1(C)$ is unbounded in $\kappa_1$. Since $crit(k^*)=\kappa_1$ and $j^*_{2}(C)=k^*(j^*_1(C))$ it follows that $j^*_2(C)\cap\kappa_1=j^*_1(C)$, hence $j^*_2(C)\cap\kappa_1$ is unbounded in $\kappa_1$ which implies that $\kappa_1\in j^*_2(C)$.
 
 For $(3)$, since $M_2^*\models cf(\kappa_1)=\kappa_1$, it follows that $\{\alpha\mid cf(\alpha)=\alpha\}\in W$. Finally, for every $\alpha<\kappa^+$, $$j^*_2(A_\alpha)=\{\beta<\kappa_2\mid f_{\kappa_2,j_2(\alpha)}(\beta)=1\}.$$ Since $j_2(\alpha)=k(j_1(\alpha))$, by the definition of $g_{\kappa_2}$, $f_{\kappa_2,j_2(\alpha)}(\kappa_1)=1$, thus $\kappa_1\in j^*_2(A_\alpha)$, and by definition of $W$, $A_\alpha\in W$.
 
 For (3), let
 $\{A_{\alpha_i}\mid i<\kappa\}$ be any subfamily of length $\kappa$ and $\kappa\leq \eta<[id]_W=\kappa_1$. 
 Denote $$j_2*(\l A_{\alpha_i}\mid i<\kappa\r)=\l A^{(2)}_{\alpha^{(2)}_i}\mid i<\kappa_2\r, \ j_1^*(\l A_{\alpha_i}\mid i<\kappa\r)=\l A^{(1)}_{\alpha^{(1)}_i}\mid i<\kappa_1\r$$
 Since $\kappa\leq\eta<\kappa_1$, then $\eta\notin j_1''\kappa^+$ and thus $\alpha^{(1)}_\eta\notin j_1''\kappa^+$. Also, $k(\alpha^{(1)}_\eta)=\alpha^{(2)}_{k(\eta)}=\alpha^{(2)}_\eta$. Hence by definition, $f_{\kappa_2,\alpha^{(2)}_\eta}(\kappa_1)=0$, hence $\kappa_1\notin A'_{\alpha^{(2)}_\eta}$
\end{proof}
\end{proof}
\subsection{Adding $\kappa^+$-Cohen subsets to $\kappa$ by Prikry forcing}
In this section we will construct a model in which there is a $\kappa$-complete ultrafilter $W$ such that forcing with $\Pri(W)$ adds a generic for $\Add(\kappa,\kappa^+)$. Let us first observe that such an ultrafilter must fail to satisfy the Galvin property:
\begin{proposition}\label{Galvin inplies no Cohens}
If $Gal(U,\kappa,\kappa^+)$ holds then $\Pri(U)$ does not add a $V$-generic filter for $\Add(\kappa,\kappa^+)$. \end{proposition}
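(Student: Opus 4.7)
The plan is to derive a contradiction by combining the Prikry property of $\Pri(U)$ with the Galvin property applied to a suitably chosen $\kappa^+$-family of measure-one sets. Assume, toward contradiction, that some condition, which we may take without loss of generality to be the trivial condition $\langle \emptyset, \kappa\rangle$, forces $\dot H$ to be a $V$-generic filter for $\Add(\kappa, \kappa^+)^V$, and for each $\alpha < \kappa^+$ let $\dot f_\alpha$ be the derived name for the $\alpha$-th generic Cohen function.

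The first step is to extract, for each $\alpha$, a single coordinate's worth of ground-model information about $\dot f_\alpha$. Using the Prikry property of $\Pri(U)$ applied to the statement ``$\dot f_\alpha(0) = 1$'', find $X_\alpha \in U$ and $\epsilon_\alpha \in \{0,1\}$ with $\langle \emptyset, X_\alpha\rangle \Vdash \dot f_\alpha(0) = \check\epsilon_\alpha$. Since $\kappa^+$ is regular, a pigeonhole yields $A \subseteq \kappa^+$ with $|A| = \kappa^+$ on which $\epsilon_\alpha$ is constant; by symmetry we may assume $\epsilon_\alpha = 0$ for all $\alpha \in A$.

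Now apply $Gal(U, \kappa, \kappa^+)$ to $\{X_\alpha \mid \alpha \in A\}$ to obtain $I \in [A]^\kappa$ with $X^* := \bigcap_{\alpha \in I} X_\alpha \in U$. Since $\langle \emptyset, X^*\rangle$ is a direct extension of every $\langle \emptyset, X_\alpha\rangle$ for $\alpha \in I$, it simultaneously forces $\dot f_\alpha(0) = 0$ for all $\alpha \in I$. In $V$, however, the set
$$D = \{\, p \in \Add(\kappa, \kappa^+)^V \mid \exists \alpha \in I, \ (0,\alpha) \in \dom p \text{ and } p(0, \alpha) = 1 \,\}$$
is dense open: given any $q$, since $|q| < \kappa$ and $|I| = \kappa$ we may pick $\alpha \in I$ with $(0, \alpha) \notin \dom q$ and adjoin the value $1$ there. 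By the assumed $V$-genericity of $\dot H$, the condition $\langle \emptyset, X^*\rangle$ forces some $p \in \dot H \cap \check D$, and hence forces $\dot f_\alpha(0) = 1$ for some $\alpha \in \check I$, contradicting the previous sentence.

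The argument is short and its essential point is already visible: the Prikry property delivers a $V$-value $\epsilon_\alpha$ for each of $\kappa^+$ coordinates at the cost of one measure-one constraint $X_\alpha$ per coordinate, and Galvin is precisely the tool that lets us glue $\kappa$-many such constraints into a single condition deciding coordinate $(0, \alpha)$ for a whole $\kappa$-sized family simultaneously — which the $V$-generic behavior of $\Add(\kappa, \kappa^+)$ then cannot respect. No real obstacle is anticipated; the only thing to be careful with is that, because $U$ is only $\kappa$-complete, Galvin is genuinely needed, since a naive intersection of $\kappa$-many $X_\alpha$'s need not lie in $U$.
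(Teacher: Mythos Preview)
Your argument is correct. It differs from the paper's in presentation rather than in spirit. The paper quotes an external result (\cite[Proposition 1.3]{GitDensity}) stating that under $Gal(U,\kappa,\kappa^+)$ every set of size $\kappa^+$ in the Prikry extension contains a ground-model subset of size $\kappa$, and then codes the Cohen generic as a subset $X\subseteq\kappa^+$ and observes by density that $X$ cannot contain any old $\kappa$-sized set. You instead unpack the mechanism directly: the Prikry property decides one bit $\dot f_\alpha(0)$ per coordinate at the cost of one measure-one constraint $X_\alpha$, and Galvin is exactly what lets you amalgamate $\kappa$ of those constraints into a single condition, producing a ground-model determination of $\kappa$ many bits of the would-be generic --- which a density argument in $\Add(\kappa,\kappa^+)$ immediately rules out. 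Your route is self-contained (no external citation) and makes the role of Galvin transparent; the paper's route isolates a reusable structural fact about Prikry extensions. The one cosmetic point is that ``without loss of generality the trivial condition'' is a slight shortcut --- if only some $p=\langle t,A\rangle$ forces genericity of $\dot H$, run the same argument below $p$, obtaining $\langle t,X_\alpha\rangle$ and then $\langle t,X^*\rangle$; nothing changes.
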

\begin{proof}
Suppose that $Gal(U,\kappa,\kappa^+)$ holds and let $G\subseteq \Pri(U)$ be $V$-generic. By \cite[Proposition 1.3]{GitDensity} every set $A\in V[G]$ of size $\kappa^+$ contains a set $B\in V$ of cardinality $\kappa$. Toward a contradiction suppose that $H\in V[G]$ is a $V$-generic filter for $\Add(\kappa,\kappa^+)$. Code $H:\kappa\times\kappa^+\rightarrow 2$ as $X\subseteq\kappa^+$, just pick a bijection $\phi$ from $\kappa^+$ to $\kappa^+\times \kappa$ and let $X=\{\alpha<\kappa^+\mid H(\phi(\alpha))=1\}$. The set $X$ does not contain an old subset of cardinality $\kappa$, this is a contradiction. To see this,  let $Y\in V$ such that $|Y|=\kappa$, proceed with a density argument: any condition $p\in \Add(\kappa,\kappa^+)$ has size $<\kappa$ and therefore can be extended to a condition $p'$ such that for some $y\in Y$, $\phi(y)\in \dom(p')$ and $p'(\phi(y))=0$.
\end{proof}

Hence the failure of the Galvin property is necessary.




\begin{theorem}\label{main theorem}
Assume $GCH$ and that $\kappa$ is a measurable cardinal in $V$. Then there is a cofinality preserving forcing extension $V^*$ in which $GCH$ still holds, and there is a $\kappa$-complete ultrafilter $U^*\in V^*$ over $\kappa$ such that forcing with Prikry forcing $Pikry(U^*)$ introduces a $V^*$-generic filter for $Cohen^{V^*}(\kappa,\kappa^+)$.
\end{theorem}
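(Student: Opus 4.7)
The plan is to slightly modify the construction of Theorem \ref{universe theorem} so that the resulting ultrafilter $U^*$ not only carries a strong witness for the failure of Galvin's property, but also allows Prikry forcing with it to glue initial segments of Cohen functions on smaller inaccessibles into $\kappa^+$ mutually generic Cohen functions on $\kappa$. Keep the same Easton-support iteration $\langle \mathcal{P}_\alpha, \lusim{Q}_\beta : \alpha \le \kappa+1,\ \beta \le \kappa\rangle$ which adds, for every inaccessible $\alpha \le \kappa$, a sequence $\langle f_{\alpha,\beta} : \beta < \alpha^+\rangle$ of mutually generic Cohen functions on $\alpha$, and set $V^* = V[G]$. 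In $V^*$, repeat the double-ultrapower lift of Theorem \ref{universe theorem}, adjusting the auxiliary generics $g_{\kappa_1}$ and $g_{\kappa_2}$ at the coordinates determined by the canonical functions $\tau_\beta$ in order to coherently control the values $f_{\kappa_2, k(j_1(\beta))}(\kappa_1)$. The resulting $\kappa$-complete ultrafilter $U^*$ concentrates on inaccessibles, extends $\mathrm{Cub}_\kappa$, has $[\mathrm{id}]_{U^*} = \kappa_1$, and admits a strong witness $\langle A_\beta : \beta < \kappa^+\rangle$ with $A_\beta = \{\nu : f_{\kappa,\beta}(\nu) = 1\}$; cofinality preservation and GCH in $V^*$ are inherited from that construction.

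Given a $V^*$-generic Prikry sequence $\langle c_n : n < \omega\rangle$ for $\Pri(U^*)$, define the candidate Cohen generic $\lusim{F} = \langle \lusim{F}_\beta : \beta<\kappa^+\rangle$ by
\[
\lusim{F}_\beta \restriction [c_{n-1}, c_n) \;:=\; f_{c_n,\,\tau_\beta(c_n)} \restriction [c_{n-1}, c_n), \qquad c_{-1}:=0,
\]
where $\tau_\beta$ is the canonical function of index $\beta$ from Proposition \ref{properties of canonical functions}. Since $U^*$ concentrates on inaccessibles and $\tau_\beta(\nu) < \nu^+$ for every regular $\nu < \kappa$, each $f_{c_n, \tau_\beta(c_n)}$ is a genuine Cohen function on $c_n$ added by the iteration, so the definition makes sense. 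The goal is to show that $\Pri(U^*)$ forces $\lusim{F}$ to be a $V^*$-generic filter for $\Add^{V^*}(\kappa,\kappa^+)$.

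The genericity argument proceeds as follows. Fix a dense open $D \in V^*$ of $\Add^{V^*}(\kappa,\kappa^+)$ and a Prikry condition $p = \langle c_1, \ldots, c_n, A\rangle$. A one-point extension $p^{\smallfrown}\nu$ of $p$ determines $\lusim{F}$ on the rectangle $[0,\nu) \times \kappa^+$ completely, namely as the concatenation of the $f_{c_i, \tau_\beta(c_i)}$ and $f_{\nu, \tau_\beta(\nu)}$. We seek $\nu \in A$ in a $U^*$-large set and a condition $q \in D$ with support $s_q \subseteq \nu \times \kappa^+$ satisfying $q(\gamma,\beta) = f_{c_i, \tau_\beta(c_i)}(\gamma)$ whenever $\gamma \in [c_{i-1}, c_i)$ and $i \le n$, and $q(\gamma,\beta) = f_{\nu, \tau_\beta(\nu)}(\gamma)$ whenever $\gamma \in [c_n, \nu)$. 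By the $V^*$-genericity of the Cohen sequences $\langle f_{\nu,\gamma} : \gamma < \nu^+\rangle$ and a density argument inside $\Add(\nu,\nu^+)$ that reflects $D$, such a $\nu$ and $q$ exist; the Prikry property of $\Pri(U^*)$ then converts the resulting $U^*$-almost-every $\nu$ statement into a direct extension of $p^{\smallfrown}\nu$ forcing $q \subseteq \lusim{F}$.

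The main obstacle is ensuring that the $\kappa^+$-many functions $F_\beta$ obtained by gluing are \emph{mutually} generic, rather than each individually Cohen-generic. This is where the strong-witness property is indispensable: in the ultrapower, the values $j_{U^*}(F_\beta)(\kappa_1) = f_{\kappa_2, j_2(\beta)}(\kappa_1)$ are controlled so that for any $\kappa$-sized subfamily $\{F_{\beta_\xi} : \xi < \kappa\}$, the coordinates indexed by the canonical functions separate the ``diagonal'' indices $j_2(\beta)$ from all non-diagonal indices, providing the independence needed to meet dense open sets with $<\kappa$-sized supports drawn from arbitrary subsets of $\kappa^+$. Combined with the per-$\nu$ genericity of the smaller Cohen sequences, this yields the full mutual genericity of $\langle F_\beta : \beta < \kappa^+\rangle$, completing the proof.
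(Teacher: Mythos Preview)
Your proposal misses the central device of the paper's construction: the iteration must be changed, not kept as in Theorem~\ref{universe theorem}. At each inaccessible $\alpha<\kappa$ the paper forces with the lottery sum
\[
\Lot\bigl(\Add(\alpha,\alpha^+),\ \Add(\alpha,\alpha^+)\times\Add(\alpha,\alpha^+)\bigr),
\]
and in the second ultrapower chooses, at $\kappa_1$, the option with \emph{two} blocks $F_{\kappa_1},H_{\kappa_1}$. The candidate Cohen generic is then built from the \emph{second} block, $f^*_\alpha\restriction[c_{m-1},c_m)=h_{c_m,s_\alpha(c_m)}\restriction[c_{m-1},c_m)$, not from the $f_{c_m,\cdot}$'s. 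This is not cosmetic. In the genericity argument one fixes a maximal antichain $\mathcal A\in V^*$, reflects it via an elementary submodel to $\overline{\mathcal A}$ at level $\nu_0$, and must exhibit an element of $\overline{\mathcal A}$ extended by the relevant functions. The reflected $\overline{\mathcal A}$ lies in $V[G_{\nu_0}*F_{\nu_0}\restriction Z]$ for some $Z\subseteq\nu_0^+$; the $h_{\nu_0,\cdot}$'s are mutually generic over that model, so they meet $\overline{\mathcal A}$. In your setup you only have the single block $\langle f_{\nu_0,\gamma}:\gamma<\nu_0^+\rangle$, and there is no reason the indices $\{\tau_\beta(\nu_0):\beta\in Y\}$ avoid $Z$; the functions you want to use to meet $\overline{\mathcal A}$ may be exactly those from which $\overline{\mathcal A}$ was defined. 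Your appeal to ``$V^*$-genericity of $\langle f_{\nu,\gamma}\rangle$'' is circular: these functions are generic over $V[G_\nu]$, not over any model containing the reflected dense set.

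A second, related gap is that your definition of $\lusim F_\beta$ ignores the index $n_\alpha$ (the least $n$ with $\{c_m:m\ge n\}\subseteq A_\alpha$). The paper uses $n_\alpha$ to split $\phi''\nu_0$ into $X_0^{\nu_0}$ and $X_1^{\nu_0}$ and runs a delicate two-step density argument (choosing $\nu_0$ and then $\nu_1$) to force the glued function to extend a single $d\in\mathcal A$ on both pieces; the strong-witness property enters precisely through the set $R$ bounding $X_1^{\nu_0}$. Your final paragraph invokes the strong witness as providing generic ``independence'', but gives no mechanism; in the paper it is used to control \emph{which} coordinates can fail to lie in $A_\tau$ at $\nu_0$, so that the remaining obligation can be discharged by a single further Prikry point $\nu_1$. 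Without the second Cohen block and without the $n_\alpha$/$X_0$--$X_1$ machinery, the genericity argument does not go through.
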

\begin{proof}
The model $V^*$ is obtained by iterating with Easton support the lottery sum of Cohen forcings for adding
 $\alpha^+-$Cohen functions $\l f_{\alpha \gamma}\mid \gamma<\alpha^+\r$ over $\alpha$, and Cohen${}^2$ for adding two blocks of $\alpha^+-$Cohen functions $$\l f_{\alpha \gamma}\mid \gamma<\alpha^+\r,\l h_{\alpha\gamma}\mid \gamma<\alpha^+\r.$$
More specifically, let
$$\l \calP_\alpha, \lusim{Q}_\beta \mid \alpha\leq \kappa+1, \beta\leq\kappa \r$$
denotes the Easton support iteration, such that for each $\alpha<\kappa$, $\lusim{Q}_\alpha$ is the trivial forcing unless $\alpha$ is inaccessible in which case $\lusim{Q}_\alpha$ is a $\calP_\alpha$-name for the lottery sum $$\Lot(\Add(\alpha,\alpha^+),\Add(\alpha,\alpha^+)\times \Add(\alpha,\alpha^+)).$$
At $\kappa$ itself  we let $\lusim{Q}_\kappa=\Add(\kappa,\kappa^+)$.
Let
$G_\kappa*F_\kappa$ be a $V$-generic subset of $P_\kappa*\lusim{Q}_\kappa$ and let $V^*=V[G_\kappa*F_\kappa]$. We denote by $F_\alpha:=\l f_{\alpha \gamma}\mid \gamma<\alpha^+\r$ the generic Cohen function if $\Add(\alpha,\alpha^+)$ was forced in $G_\kappa$ and by $$F_\alpha:=\l f_{\alpha \gamma}\mid \gamma<\alpha^+\r,\ H_\alpha:=\l h_{\alpha,\gamma}\mid \gamma<\alpha^+\r$$ if $\Add(\alpha,\alpha^+)\times \Add(\alpha,\alpha^+)$ was.

 Let $U\in V$ be a normal ultrafilter, $j_1:=j_U:V\rightarrow M_U$ the corresponding elementary embedding, $\kappa_1=j_1(\kappa)$, $k:=j_{j_1(U)}:M_U\rightarrow M_{U^2}$, $j_2=k\circ j_1$, and $\kappa_2=j_2(\kappa)$. Let us extend $j_1, k,j_2$ in $V[G_{\kappa}*F_\kappa]$:
\\We first extend $j_{1}: V \to M_{{U}}$ to $j_1^{*}: V[G_\kappa*F_\kappa]\to M_{U}[G_{\kappa_1}*F_{\kappa_1}]$.
Do this by taking first $G_{\kappa_1}\cap P_{\kappa}=G_\kappa$, at $\kappa$ we force with the lottery sum so we can choose to force only one block of Cohens and take $F_\kappa$ as a generic. Then defining a master condition sequence, using the closure of the forcing above $\kappa$ in $M_{U}$ exploiting $GCH$ to ensure that there are only $\kappa^+$-many dense sets to meet. This defines $G_{\kappa_1}$. As for $F_{\kappa_1}$, we first find an $M_U[G_{\kappa_1}]$-generic $F'_{\kappa_1}\times H'_{\kappa_1}\in V[G_{\kappa}*F_\kappa]$ again using $GCH$, closure of $M_U[G_{\kappa_1}]$ under $\kappa$-sequences and the closure of the forcing $(\Add(\kappa_1,\kappa_1^+)^2)^{M_U[G_{\kappa_1}]}$. Let us alter some values of $F'_{\kappa_1}$ and $H'_{\kappa_1}$ to define $F_{\kappa_1}=\l f_{\kappa_1,\gamma}\mid \gamma<\kappa_1^+\r$ and $H_{\kappa_1}=\l h_{\kappa_1,\gamma}\mid \gamma<\kappa_1^+\r$ such that for every $\alpha<\kappa_1^+$:
\begin{enumerate}
    \item  $f_{\kappa_1,j_1(\alpha)}\restriction \kappa=h_{\kappa_1,j_1(\alpha)}\restriction\kappa=f_{\kappa,\alpha}$.
    \item $f_{\kappa_1,j_1(\alpha)}(\kappa)=\alpha$.
\end{enumerate}
Formally, we change every pair of partial functions $p=\l p_0,p_1\r \in F'_{\kappa_1}\times H'_{\kappa_1}$ to the pair of partial functions $p_*=\l p_0^*,p_1^*\r$ such that $\dom(p_0^*)=\dom(p_0)$, $\dom(p_1^*)=\dom(p_1)$ and for every $\l \alpha,\delta\r\in \dom(p_0)$:
$$p^*_0(\l\alpha,\delta\r)=\begin{cases} f_{\kappa,\alpha_0}(\delta) &\exists \alpha_0<\kappa^+.\alpha=j_1(\alpha_0)\text{ and }\delta<\kappa\\ \alpha_0 & \exists \alpha_0<\kappa^+.\alpha=j_1(\alpha_0)\text{ and }\delta=\kappa\\ p_0(\l\alpha,\delta\r) & else\end{cases}$$
$$p^*_1(\l\alpha,\delta\r)=\begin{cases} f_{\kappa,\alpha_0}(\delta) &\exists \alpha_0<\kappa^+.\alpha=j_1(\alpha_0)\text{ and }\delta<\kappa\\ p_1(\l\alpha,\delta\r) & else\end{cases}$$

Note that for every $p_0,p_1\subseteq \Add(\kappa_1,\kappa_1^+)^{M_U[G_{\kappa_1}]}$ we only change $\kappa$-many values as $M_U[G_{\kappa_1}]\models|\dom(p_0)|,|\dom(p_1)|<\kappa_1$ hence $$|j_1''\kappa^+\cap\{\alpha\mid\exists\delta.\l\alpha,\delta\r\in \dom(p_0)\}|\leq\kappa$$ since $j_1(\kappa^+)=\bigcup j_1''\kappa^+$, the same holds for $p_1$. It follows that $$p^*\in (\Add(\kappa_1,\kappa_1^+)^2)^{M_U[G_{\kappa_1}]}.$$ Changing less than $\kappa_1$-many values of a generic for $\Add(\kappa_1,\kappa_1^+)^2$ does not impact the genericity. Hence $F_{\kappa_1}\times H_{\kappa_1}:=\{p^*\mid p\in F'_{\kappa_1}\times H'_{\kappa_1}\}\in V[G_{\kappa}*F_\kappa]$ is still $M_U[G_{\kappa_1}]$-generic. 

Since at $\kappa$ we only force $\Add(\kappa,\kappa^+)$, in order to extend $j_1$ we only need a generic for $\Add(\kappa_1,\kappa_1^+)$ in the $M_U$-side. We constructed $F_{\kappa_1}$ so that  $j_1''F_\kappa\subseteq F_{\kappa_1}$ hence $j_1''G_\kappa*F_\kappa\subseteq G_{\kappa_1}*F_{\kappa_1}$ ($H_{\kappa_1}$ will be used later).
Thus in $V[G_{\kappa}*F_{\kappa}]$, we have extended $j_1\subseteq j_1^*:V[G_{\kappa}*F_{\kappa}]\rightarrow M_U[G_{\kappa_1}*F_{\kappa_1}]$. Let us note that $j_1^*$ is actually the elementary embedding derived from the normal measure $U\subseteq U^0:=\{X\in P^{V[G_\kappa*F_\kappa]}(\kappa)\mid \kappa\in j_1^*(X)\}$:

Clearly the function $k_0:M_{U^0}\rightarrow M_U[G_{\kappa_1}*F_{\kappa_1}]$ defined by $k_0([f]_{U^0})=j^*_1(f)(\kappa)$ is elementary. To see the $k_0=id$ let us prove that $k_0$ is onto. Fix $A=(\lusim{A})_{G_{\kappa_1}*F_{\kappa_1}}\in M_U[G_{\kappa_1}*F_{\kappa_1}]$ and let $f\in V$ be such that $j_1(f)(\kappa)=\lusim{A}$ and define in $V[G_{\kappa}*F_\kappa]$ the function $f^*(x)=(f(f_{\kappa,\kappa}(x)))_{G_{\kappa}*F_{\kappa}}$. Then
$$k_0(j_{U^0}(f^*)(\kappa))=j^*_1(f^*)(\kappa)=(j^*_1(f)(j^*_1(f_{\kappa,\kappa})(\kappa)))_{G_{\kappa_1}*F_{\kappa_1}}=$$
$$=(j_1(f)(\kappa))_{G_{\kappa_1}*F_{\kappa_1}}=(\lusim{A})_{G_{\kappa_1}*F_{\kappa_1}}=A.$$
 
Recall that we have constructed the function $H_{\kappa_1}\in V[G_{\kappa}*F_{\kappa}]$ such that $F_{\kappa_1}\times H_{\kappa_1}$ is $M_U[G_{\kappa_1}]-$generic for $\Add(\kappa_1,\kappa_1^+)^2$. Now we wish to extend $k:M_U\rightarrow M_{U^2}$ to $k^*:M_U[G_{\kappa_1}*F_{\kappa_1}]\rightarrow M_{U^2}[G_{\kappa_2}*F_{\kappa_2}]$ in $V[G_{\kappa}*F_{\kappa}]$. We do this by taking $G_{\kappa_2}\cap \kappa_1= G_{\kappa_1}$, at $\kappa_1$ we force $\Add(\kappa_1,\kappa_1^+)\times \Add(\kappa_1,\kappa_1^+)$ putting the generic $F_{\kappa_1}\times H_{\kappa_1}$, then exploiting the closure and $GCH$ to complete to a generic $G_{\kappa_2}*F'_{\kappa_2}\in V[G_\kappa*F_\kappa]$. Finally, we wish to modify some values of $F'_{\kappa_2}$ to a generic $F_{\kappa_2}=\l f_{\kappa_2,\gamma}\mid\gamma<\kappa_2^+\r$ so that for every $\alpha<\kappa_1^+$:
\begin{enumerate}
    \item $f_{\kappa_2,k(\alpha)}\restriction \kappa_1=f_{\kappa_1,\alpha}$.
    \item For $\alpha\in j_1''\kappa^+$, $f_{\kappa_2,k(\alpha)}(\kappa_1)=1$.
    \item For $\alpha\in \kappa_1^+\setminus j_1''\kappa^+$, $f_{\kappa_2,k(\alpha)}(\kappa_1)=0$
    \item $f_{\kappa_2,\kappa_1}(\kappa_1)=\kappa$.
\end{enumerate}
Again, this is possible since we do not change too many values of $F'_{\kappa_2}$. At this point, let us emphasize that we do not use $H_{\kappa_1}$ in the generic we have in the $M_U$-side \footnote{Since over $V$, at $\kappa$ we forced one copy of Cohen's i.e. $\Add(\kappa,\kappa^+)$, over $M_U$ we need to force only one copy of $\Add(\kappa_1,\kappa_1^+)$, thus we only need the generic $F_{\kappa_1}$.}. The generic $H_{\kappa_1}$ is used in the construction of the generic on the $M_{U^2}$-side where we can choose (due to the lottery sum) to force at $\kappa_1$ two copies of $\Add(\kappa_1,\kappa_1^+)$, of course, that at $\kappa_2=j_{2}(\kappa)$ we are still obligated to force one copy of $\Add(\kappa_2\kappa_2^+)$ which contains the point-wise image of $F_{\kappa_1}$ under the factor map $k$.  

Hence we extended in $V[G_{\kappa}*F_\kappa]$, $k\subseteq k^*:M_U[G_{\kappa_1}*F_{\kappa_1}]\rightarrow M_{U^2}[G_{\kappa_2}*F_{\kappa_2}]$. 
\begin{center}
\begin{tikzpicture}
 \draw (0,0)--(0,7);
 \draw (-0.1,2)--(0.1,2);
 \draw [-stealth] (0.55,2.15)--(4,3.7);
 \draw [dashed] (0.55,2)--(4,2);
 \draw (5,0)--(5,7);
 \draw (4.9,4)--(5.1,4);
 \draw [-stealth] (5.55,4.1)--(9,5.7);\draw [dashed] (5.55,4)--(8.2,4);

 \draw (10,0)--(10,7);
 \draw (9.9,6)--(10.1,6);
\node at (0,8){$V$};
\node at (5,8){$M_U$};
\node at (10,8){$M_{U^2}$};
    \node at (0.3,2){$\kappa$};
    
    \node at (-0.3,1){$G_\kappa$};
    \node at (4.5,1){$G_\kappa$};
    \node at (-0.3,2){$F_\kappa$};
    \node at (4.4,2.8){$G_{(\kappa,\kappa_1)}$};
    \node at (11.3,2.8){$G_{\kappa_1} \ \ \ (=G_\kappa*F_\kappa*G_{(\kappa,\kappa_1)})$};
    \node at (9.05,4){$F_{\kappa_1}\times H_{\kappa_1}$};
    
    \node at (4.5,2){$F_\kappa$};
    
    \node at (5.3,4){$\kappa_1$};
    
    \node at (3.7,4){$j_1''F_\kappa\subseteq F_{\kappa_1}$};
    \node at (10.3,6){$\kappa_2$};
    \node at (8.5,6){$k''F_{\kappa_1}\subseteq F_{\kappa_2}$};
    \node at (0.3,2){$\kappa$};
    \node at (2.7,2.8){$j_1$};
    \node at (7.5,4.7){$k$};
    \end{tikzpicture}
    \end{center}

Let $j_2^*=k^*\circ j_1^*$, $V^*=V[G_{\kappa}*F_\kappa]$, $M_1^*=M_U[G_{\kappa_1}*F_{\kappa_1}]$ and $M_2^*=M_{U^2}[G_{\kappa_2}*F_{\kappa_2}]$. 

In $V^*$, define 
$$U^*=\{X\subseteq\kappa\mid \kappa\in j_2^*(U)\}$$
$$W=\{X\subseteq \kappa\mid \kappa_1\in j^*_2(X)\}$$
and for every $\alpha<\kappa^+$:
$$A_\alpha=\{\nu<\kappa\mid f_{\kappa,\alpha}(\nu)=1\}.$$
Then as in Claim \ref{final claim}, we have that
$W$ is a $\kappa$-complete ultrafilter over $\kappa$ such that:
\begin{enumerate}\item $j_1^*=j_U^*$, $j^*_2=j_W$ and $[id]_W=\kappa_1$.
    \item $\l A_\alpha\mid\alpha<\kappa^+\r$ is a strong witness for $W$ being non-Galvin.
    \item $Cub_\kappa\subseteq W$.
    \item $L_0=\{\alpha<\kappa\mid \Add(\alpha,\alpha^+)\times \Add(\alpha,\alpha^+)\text{ was forced in }G_\kappa\}\in W$.
\end{enumerate}
Also, recall that $j_2:V\rightarrow M_2$ is also the ultrapower by $U\times U$ under the identification(isomorphism): $$j_{U^2}(f)(\kappa,\kappa_1)=j_{2,1}(j_1(\nu\mapsto f(\nu,*))(\kappa))(\kappa_1).$$
Clearly, the projections $\pi_{1},\pi_2:\kappa\times\kappa\rightarrow \kappa$ on the first and second coordinates resp. Rudin-Keisler project $U^2$ on $U$. Also, $W\cap V=U^*\cap V=U$ and $U^*\leq_{R-K} W$ and the projection map is denoted by $\nu\mapsto \pi_{nor}(\nu)$. \footnote{Explicitly, one can define in $V[G]$ the function $f(\alpha)=f_{\kappa,\alpha}(\alpha)$. Then $j^*_2(f)(\kappa_1)=f_{\kappa_2,\kappa_1}(\kappa_1)=\kappa$.}.  

Let us prove that $W$ witnesses the theorem:
\begin{theorem}\label{Proof for W is Cohen}
Let  $H\subseteq \Pri(W)$ be a $V^*$-generic filter. There is $G^*\in V^*[H]$ which is $V^*$-generic for $Cohen(\kappa,\kappa^+)^{V^*}$.  
\end{theorem}
\begin{proof}[ Proof of Theorem \ref{Proof for W is Cohen}:]
Let $\l c_n \mid n<\omega \r $ be the $W$-Prikry sequence corresponding to $H$. Suppose without loss of generality that for every $n<\omega$, $c_n\in L_0$, this will hold from a certain point and the proof can be adjusted in a straightforward way. This guarantees that the generic  $H_{c_n}=\l h_{c_n,\gamma}\mid \gamma<\alpha^+\r$ for the second component of the generic we have in $G_\kappa$ for $\Add(c_n,c_n^+)\times \Add(c_n,c_n^+)$ is defined  for every $n<\omega$. The functions $h_{c_n,\gamma}$ will be used below to define the Cohen generic functions.

Define, for every $n<\omega$, the set
$$Z_n=\{\alpha<\kappa^+\mid \{c_m \mid n\leq m<\omega\} \subseteq A_\alpha \text{ and } n \text{ is least possible } \}.$$
For every $\alpha<\kappa^+$, let $n_\alpha$ be the unique $n$ such that $\alpha\in Z_n$.
Let $\alpha<\kappa^+$, define $f^*_\alpha:\kappa\to \kappa$ as follows:

Fix a sequence $\l s_\alpha \mid \alpha<\kappa^+ \r\in V^*$ of canonical functions in $\prod_{\nu<\kappa}\nu^+$.

$$f^*_\alpha\restriction c_{n_\alpha}= h_{c_{n_\alpha} s_\alpha(c_{n_\alpha})},$$

$$f^*_\alpha\restriction [c_{m-1}, c_m)= h_{ c_m, s_\alpha(c_m)}\restriction [c_{m-1}, c_m), \text{for }m, n_\alpha< m<\omega.$$

Let us argue that $F=\l f^*_\alpha \mid \alpha<\kappa^+\r $ induces a $\Add(\kappa,\kappa^+)^{V^*}$ generic filter over $V^*$.

\begin{claim}
Let $G^*=\{p\in \Add(\kappa,\kappa^+)^{V^*}\mid p\subseteq F\}$, then $G^*$ is a $V^*$-generic filter.
\end{claim}
Let $\mathcal{A}\in V^*$ be a maximal antichain in the forcing $\Add(\kappa, \kappa^+)^{V^*}$. Note that since $\Add(\kappa,\kappa^{+})^{V^*}$ is $\kappa-$closed then $$Cohen(\kappa,\kappa^{+})^{V[G_\kappa]}=Cohen(\kappa,\kappa^{+})^{V^*}.$$ 
By $\kappa^+-$cc of the forcing $\mathcal{P}_{\kappa+1}$, there is $Y\subseteq\kappa^{+}$, $Y\in V$ such that $|Y|=\kappa$ and $\mathcal{A}\subseteq \Add(\kappa,Y)^{V^*}$. Also, since $|\mathcal{A}|=\kappa$, $\mathcal{A}\in V[G_\kappa*F_\kappa]$, there is $Z\subseteq\kappa^{+}$ such that $|Z|=\kappa$ such that $\mathcal{A}\in V[G_\kappa*F_\kappa\restriction Z]$. Without loss of generality assume that $Z=Y\in V$ (Otherwise just take the union). Let $V\ni\phi:\kappa\rightarrow Y$ be a bijection.

\begin{claim}\label{elementary chain construction}
There is an $\in-$increasing continuous chain $\l N_\beta\mid \beta<\kappa\r$ of elementary submodels of $H_\chi$ such that

\begin{enumerate}
  \item $|N_\beta|<\kappa$,
  \item $G_\kappa,F_\kappa,\mathcal{A},\phi,\l s_\alpha\mid \alpha<\kappa^+\r\in N_0$,
  \item $N_\beta\cap \kappa=\gamma_\beta$ is a cardinal $<\kappa$, $\gamma_{\beta+1}$ is regular.
  \item for every $\rho,\delta\in\phi''\gamma_\beta.\rho<\delta\rightarrow\forall\gamma_\beta\leq \mu<\kappa , s_\rho(\mu)<s_\delta(\mu)$.
  \item If $\gamma_\beta$ is regular, then $N_\beta^{<\gamma_\beta}\subseteq N_\beta$. In particular $\Add(\gamma_\beta,\phi''\gamma_\beta)=\Add(\kappa,Y)\cap N_\beta$.
\end{enumerate}
\end{claim}
\textit{Proof of Claim \ref{elementary chain construction}}
Let us construct such a sequence inductively. Note that $(4)$ follows from elementarity and $(2)$. Requirements $(1)-(5)$ are preserved at limit stages due to continuity. At successor stages, suppose we have constructed $N_\beta$,  find an elementary submodel $N^0_{\beta+1}$ such that $N_\beta\subseteq N^0_{\beta+1}, \ \l N_\alpha\mid\alpha<\beta\r\in N^0_{\beta+1}$, then we construct an auxiliary $\in$-increasing and continuous chain of elementary submodels $\l N^\alpha_{\beta+1}\mid \alpha<\kappa\r$ as follows: $N^0_{\beta+1}$ is already defined. At limits we take the union and at successor let us take care of requirements 3,5. Let $\gamma'_\alpha=\sup(N^{\alpha}_{\beta+1}\cap\kappa)<\kappa$. Let $N^{\alpha+1}_{\beta+1}$ be an elementary submodel such that $N^{\alpha}_\beta,^{<\gamma'_\alpha},\subseteq N^{\alpha+1}_{\beta+1}$ and $|N^{\alpha+1}_{\beta+1}|<\kappa$. Note that the sets $$C_1=\{\alpha<\kappa\mid N^{\alpha}_{\beta+1}\cap\kappa=\gamma'_\alpha\in\kappa\}$$ 
$$C_2=\{\alpha\in C_1\mid \text{if }\gamma_\alpha\text{ is regular then } N_{\alpha}^{<\gamma_\alpha}\subseteq N_\alpha\}$$
are clubs and also $\bar C=C_1\cap C_2$ is. It follows that $\{\gamma'_\alpha\mid \alpha\in \bar C\}$ is a club and since $\kappa$ is measurable, there is a $\alpha^*\in \bar C$ limit such that $\gamma'_{\alpha^*}$ is regular. Let $N_{\beta+1}=N^{\alpha^*}_{\beta+1}$, to conclude $2$ since
$\gamma_{\beta+1}=\gamma'_{\alpha^*}$ is regular. $\qed_{\text{Claim }\ref{elementary chain construction}}$
\vskip 0.2 cm

Set
$$C=\{ \beta<\kappa \mid \gamma_\beta=\beta \}.$$
This is club in $\kappa$ since the sequence $\gamma_\beta$ is continuous and since the set $\{\beta\mid \gamma_\beta=\beta\}$ is a club.  

\begin{claim}\label{small claim}
Let $$E:=\{\beta<\kappa\mid\forall\gamma\in\phi''\beta.\exists \delta<\beta^+.f_{\kappa,\gamma}\restriction\beta=f_{\beta,\delta}\}.$$
Then $E\in W$.
\end{claim}
\textit{Proof of claim \ref{small claim}.} By construction, for every $\alpha<\kappa_1^{+}$, $f_{\kappa_2,k(\alpha)}\restriction\kappa_1=f_{\kappa_1,\alpha}$ and therefore for every for every $\alpha\in j^*_2(\phi)''\kappa_1$, there is $\nu<\kappa_1^+$ such that $\alpha=k^*(j_1^*(\phi))(\nu)=k^*(j_1^*(\phi)(\nu))$ and $j_1^*(\phi)(\nu)<\kappa_1^{+}$. Hence $f_{\kappa_2,\alpha}\restriction\kappa_1=f_{\kappa_1,\beta}$ for some $\beta<\kappa_1^{+}$.
Reflecting this we obtain the set $E\in W$. $\qed_{\text{Claim }\ref{small claim}}$

\vskip 0.2 cm

To see that $G^*\cap \mathcal{A}\neq\emptyset$,  we will need to catch a piece of $\mathcal{A}$ in the elementary submodels constructed and pick the Prikry points in the club $C$ prepared:

\begin{claim}\label{condition in model}
For every $\nu_0\in  C\cap E$, there is $d=d^{\nu_0}\in N_{\nu_0}\cap \mathcal{A}$ such that $d$ is extended by $\l h_{\nu_0,s_{\tau}(\nu_0)}\mid \tau\in \phi''\nu_0\r$.
\end{claim}
\textit{Proof of Claim \ref{condition in model}.} Fix any $\nu_0\in C\cap E$. Consider the transitive collapse of $\pi:N_{\nu_0}\rightarrow N_{\nu_0}^*$.
Then the critical point of $\pi^{-1}:N_{\nu_0}^*\rightarrow N_{\nu_0}$ is $\nu_0$ and $\pi^{-1}(\nu_0)=\kappa$. 
Denote by $\overline{F}_\kappa=\pi(F_\kappa), \overline{\phi}=\pi(\phi)$. Denote $\overline{F}_\kappa=\l \overline{f}_{\kappa,\gamma}\mid \gamma< \pi(\kappa^+)\r$.  For every $\gamma\in\overline{\phi}''{\nu_0}$, there is some $\delta<{\nu_0}$ such that, $$\gamma=\pi(\phi)(\delta)=\pi(\phi(\delta))\text{ and  }\overline{f}_{\kappa,\gamma}=\pi(f_{\kappa,\phi(\delta)}).$$ Moreover, since ${\nu_0}\in E$, $f_{\kappa,\phi(\delta)}\restriction{\nu_0}=f_{{\nu_0},\rho}$ for some $\rho<{\nu_0}^{+}$ and therefore $\overline{f}_{\kappa,\gamma}=f_{{\nu_0},\rho}$. 
Recall that $\mathcal{A}=(\lusim{A})_{G_\kappa*F_\kappa\restriction Y}$ hence $\overline{\mathcal{A}}=(\lusim{A})_{G_{\nu_0}*\overline{F}_\kappa\restriction \overline{Y}}$. We conclude that for some subset $Z\subseteq{\nu_0}^{+}$, $$\overline{\mathcal{A}}=(\lusim{A})_{G_{\nu_0}*F_{\nu_0}\restriction Z}\in V[G_{\nu_0}*F_{\nu_0}\restriction Z].$$ Since ${\nu_0}\in L_0$, in $V[G_{\kappa}*F_\kappa]$ we also have $H_{\nu_0}=\l h_{\nu_0,\alpha}\mid \alpha<\nu_0^+\r$ which are mutually Cohen-generic  over $V[G_{\nu_0}*F_{\nu_0}\restriction Z]$.

 By construction, $\forall\tau_1<\tau_2\in\phi''\nu_0$, $s_{\tau_1}(\nu_0)<s_{\tau_2}(\nu_0)$, hence $\l h_{\nu_0,s_\tau(\nu_0)}\mid \tau\in \phi''\nu_0\r$ are  Cohen functions over $\nu_0$ which are
 distinct mutually $V[G_{\nu_0}*F_{\nu_0}\restriction Z]$-generic. Also,  $\overline{\mathcal{A}}\subseteq \pi(\Add(\kappa,Y))= \Add(\nu_0,\pi(\phi)''\nu_0)=\Add(\nu_0,\pi''[\phi''\nu_0])$ is a maximal antichain. Since $|\pi''\phi''\nu_0|=\nu_0=|\phi''\nu_0|$, we can change the enumeration of the functions $\l h_{\nu_0,s_{\tau}(\nu_0)}\mid \tau\in \phi''\nu_0\r$ to $h'_{\pi(\tau)}=h_{\nu_0,s_{\tau}(\nu_0)}$ so that $\l h'_{\rho}\mid \rho\in\pi''\phi''\nu_0\r$ is generic for $\Add(\nu_0,\pi''\phi_0)$. Thus pick $d_0 \in \overline{\mathcal{A}}$ such that $d_0$ is extended by $\l h'_{\rho}\mid \rho\in\pi''\phi''\nu_0\r$. It follows that $$d:=\pi^{-1}(d_0)\in \mathcal{A}\cap N_{\nu_0}$$
is a condition with $\dom(d)=\pi^{-1}(\dom(d_0))$. Since the critical point of $\pi$ is $\nu_0$, for every $\l\alpha,\beta\r\in \dom(d_0)$, $\pi^{-1}(\l\alpha,\beta\r))=\l\alpha,\pi^{-1}(\beta)\r$, hence $$d(\l\alpha,\pi^{-1}(\beta)\r)=\pi^{-1}(d_0(\alpha,\beta))=d_0(\alpha,\beta).$$
In particular for every $\l\gamma,\alpha\r\in \dom(d)$, $$d(\gamma,\alpha)=d_0(\gamma,\pi(\alpha))=h'_{\pi(\alpha)}(\gamma)=h_{\nu_0,s_{\alpha}(\nu_0)}(\gamma).$$  Thus $d$ is extended by $\l h_{\nu_0,s_\tau(\nu_0)}\mid\tau\in \phi''\nu_0 \r$.
$\qed_{\text{Claim }\ref{condition in model}}$
\vskip 0.2 cm

 It suffices to show that any condition in $\Pri(W)$ has an extension which forces that $G^*$ meets a member of $\mathcal{A}$. 
 
Let $p=\l \l\r, B\r$ be a condition (we assume for simplicity that its finite sequence is empty) and shrink $B$ to $B\cap C\cap E$.  For any $\nu_0\in B\cap C\cap E$, we split $\phi''\nu_0$ into two sets: $$X^{\nu_0}_0:=\{\tau\in\phi''\nu_0\mid \nu_0\in A_\tau\}\text{ and }X^{\nu_0}_1=\phi''\nu_0\setminus X^{\nu_0}_0.$$
The condition  $p_0=\l \l \nu_0\r, B\cap C\cap E\cap X\cap (\bigcap_{\tau\in\phi''\nu_0}A_\tau)\r$, forces the following:
\begin{enumerate}
  \item the Prikry sequence is included in each $A_\tau$, $\tau\in X^{\nu_0}_0$, i.e., $n_\tau=0$,

  \item $n_\tau=1$, for every $\tau\in X^{\nu_0}_1$.
\end{enumerate}
In particular, this condition forces some information about the Cohen functions. Namely that:
\begin{enumerate}
    \item for $\tau\in X_0^{\nu_0}$,  $f^*_{\tau}\restriction \nu_0=h_{\nu_0,s_\tau(\nu_0)}$ 
    \item for $\tau\in X_1^{\nu_0}$, $f^*_{\tau}\restriction \nu_0=h_{\lusim{c}_1,s_{\tau}(\lusim{c}_1)}\restriction \nu_0$.
\end{enumerate}

We would like to find a condition in $\mathcal{A}$ which is below these decided parts of the Cohen. By the previous proposition, there is $d\in N_{\nu_0}\cap \Add(\kappa,Y)=\Add(\nu_0,\phi''\nu_0)$, which is extended by $\l h_{\nu_0,s_\tau(\nu_0)}\mid \tau\in \phi''\nu_0\r$. However, by $(1)-(2)$ we can only ensure that the generic $f^*_\tau$   to extend $d\restriction \nu_0\times X^{\nu_0}_0$ in $X^{\nu_0}_0$.  We are left to extend  $d\restriction\nu_0\times X^{\nu_0}_1$. Let us show that for many $\nu_0$, $X^{\nu}_0$ is a relatively large subset of $\phi''\nu_0$:
\begin{claim}\label{property of R}
  Let 
  $$R=\{\nu<\kappa\mid \forall\alpha\in\phi''\pi_{nor}(\nu), \nu\in A_\alpha\}.$$ 
  Then $R\in W$.
\end{claim}
\begin{proof}
Clearly, for every $\alpha\in j^*_2(\phi)''\kappa$, $\alpha=j^*_2(\phi(\gamma))$ and $f_{\kappa_2,\alpha}(\kappa_1)=1$, reflecting this, we can find a $W$-large set of $\nu$'s such that for every $\alpha\in \phi''\pi_{nor}(\nu)$, $f_{\kappa,\alpha}(\nu)=1$. And by definition of $A_\alpha$, $\nu\in A_\alpha$.
\end{proof}
Denote $B_0:=B\cap C\cap E\cap R$.
In order to extend $d\restriction \nu_0\times X_1$, we will need to pick $\nu_0$ high enough in $B_0$, but also the next point $\nu_1\in  B_0\setminus \nu_0+1$ in the Prikry sequence such that it will belong to all $A_\tau$ with $\tau\in X_1$ and in addition the relevant Cohen functions over $\nu_1$ extend $d\restriction\nu_0\times X_1$. 

Let us look at $B_0$ more carefully.
Let $\lusim{B_0}$ be its name in $V$. We fix a condition $m_0\in G_\kappa*F_\kappa$ which forces that if $\nu_0\in\lusim{B_0}$ then the properties of Claims \ref{condition in model}, \ref{property of R} holds, namely there is $d\in \Add(\nu_0,\phi''\nu_0)\cap\lusim{\mathcal{A}}$ which is extended by $\l \lusim{h}_{\nu_0,s_{\tau}(\nu_0)}\mid \nu_0\in\phi''\nu_0\r$, and $\forall \alpha\in \phi''\lusim{\pi_{nor}(\nu_0)}.\ \nu_0\in \lusim{A}_\alpha$. Recall that by the construction of $G_{\kappa_2}$, we have $m_0\in G_{\kappa_2}*F_{\kappa_2}$, 
Let $ m_0\leq t\in G_{\kappa_2}*F_{\kappa_2}$ be a condition such that
$$(1) \ \ \ t\Vdash\kappa_1\in j_2(\lusim{B_0}).$$
By the construction of $G_{\kappa_2}*F_{\kappa_2}$, $t$ has the form:
$$t=\l t_{<\kappa}, t_{\kappa},t_{(\kappa,\kappa_1)},\underset{t_{\kappa_1}}{\underbrace{\l t_{\kappa_1}^0,t_{\kappa_1}^1\r}},t_{(\kappa_1,\kappa_2)},t_{\kappa_2}\r.$$ 
Since $f_{\kappa_2,j_2(\alpha)}(\kappa_1)=1$ for every $\alpha<\kappa^+$, this will hold for every $\alpha\in\phi''\kappa$ as well. Also, recall that $Y\in V$, hence $\phi\in V$. Thus $j_2(\phi)\in M_2$ and  $j_2(\phi)''\kappa\in M_2$. Also, for $(t_{\kappa_2})_{G_{\kappa_2}}\in M_2[G_{\kappa_2}]$, $$j_2''\kappa^+\cap \supp((t_{\kappa_2})_{G_{\kappa_2}})\in M_2[G_{\kappa_2}]$$ and $(t_{\kappa_2})_{G_{\kappa_2}}\restriction \kappa\times \{j_2(\alpha)\}\subseteq f_{\kappa,\alpha}$. We also fix $\theta<\kappa^+$ such that $\supp((t_{\kappa_2})_{G_{\kappa_2}})\subseteq j_2(\theta)$, there is such $\theta$ since $j_2''\kappa^+$ is unbounded in $j_2(\kappa^+)$.
Therefore, we can extend if necessary $t$ such that
$$ (2a)\ \ t_{<\kappa_2}\Vdash (\kappa\cup\{\kappa_1\})\times j_2(\phi)''\kappa \subseteq \dom(t_{\kappa_2})\wedge (0,\kappa_1)\in\dom(t_{\kappa_2})\wedge \supp(t_{\kappa_2})\subseteq j_2(\theta)$$
$$(2b) \ \ t_{<\kappa_2}\Vdash \  t_{\kappa_2}(\kappa_1,\alpha)=1,\text{ for every }\alpha\in j_2(\phi)''\kappa\text{ and } t_{\kappa_2,\kappa_1}(0)=\kappa.$$
$$(2c) \ \  t_{<\kappa_2}\Vdash t_{\kappa_2,j_2(\alpha)}\restriction \kappa=\lusim{f}_{\kappa,\alpha}\text{  for every } j_2(\alpha)\in j_2''\kappa^+\cap\supp(t_{\kappa_2}).$$

Next consider $t_{\kappa_1}=\l t_{\kappa_1}^0,t_{\kappa_1}^1\r$, it is a $\calP_{\kappa_1}-$name for a condition in $F_{\kappa_1}\times H_{\kappa_1}$. By the construction of the generic $F_{\kappa_1}\times H_{\kappa_1}$, for every $\alpha<\kappa^+$, we made sure that $h_{\kappa_1,j_1(\alpha)}\restriction\kappa=f_{\kappa,\alpha}$. Also, $j_1(\phi)''\kappa\in M_2$. Let $$\mu_1=(j_1\restriction \phi''\kappa)^{-1}\in M_1$$ Note that for every $\beta<\kappa^+$, $j_1(s_\beta)=s_{j_1(\beta)}:\kappa_1\rightarrow\kappa_1$ is the canonical function for $j_1(\beta)$ defined in $M_U$, hence $j_2(s_\beta)(\kappa_1)=k(s_{j_1(\beta)})(\kappa_1)=j_1(\beta)$. Hence $$\dom(\mu_1)=j_1(\phi)''\kappa=\{s_\gamma(\kappa_1)\mid \gamma\in j_2(\phi)''\kappa\}, \ rg(\mu_1)=\phi''\kappa\subseteq\kappa^+$$
Extend if necessary $t_{<\kappa_1}$, and assume that $$(3) \ \ t_{<\kappa_1}\Vdash\kappa\times j_1(\phi)''\kappa\subseteq\dom(t^1_{\kappa_1})\wedge \forall j_1(\alpha)\in j_1(\phi)''\kappa, \  t_{\kappa_1,j_1(\alpha)}^1\restriction\kappa= \lusim{f}_{\kappa,\alpha}.$$
 As for the lower part, due to the Easton support, we have $$(4) \ \ t_{<\kappa}\in V_\kappa.$$

 Fix functions $r,\Gamma_1$ which represents $t,\mu$ resp. in the ultrapower $M_{U^2}$, namely $j_2(r)(\kappa,\kappa_1)=t, \ j_2(\Gamma_1)(\kappa,\kappa_1)=\mu$. Without loss of generality, suppose that for every $(\nu',\nu)$, it takes the form $$r(\nu',\nu)=\l r_{<\nu'} r_{\nu'},r_{(\nu',\nu)},\l r_{\nu}^0,r_{\nu}^1\r,r_{(\nu,\kappa)},r_\kappa\r.$$

Reflecting some of the properties of $t$ we obtain a set $B'\in U^2$ such that for every $(\nu',\nu)\in B'$:
\begin{enumerate}
    \item [$(1)_{(\nu',\nu)}$] $r(\nu',\nu)\Vdash \nu\in \lusim{B_0}$.
    \item [$(2a)_{(\nu',\nu)}$] $ r_{<\kappa}\Vdash (\nu'\cup\{\nu\})\times \phi''\nu' \subseteq \dom(r_{\kappa})\wedge \l0,\nu\r\in\dom(r_{\kappa})\wedge \supp(r_{\kappa})\subseteq \theta$

    \item [$(2b)_{(\nu',\nu)}$] $r_{<\kappa}\Vdash  \forall\alpha\in\phi''\nu'.r_{\kappa,\alpha}(\nu)=1$ and $r_{\kappa,\nu}(0)=\nu'$. 
   
    \item  [$(3)_{(\nu',\nu)}$] $ r_{<\nu}\Vdash \nu'\times\dom(\Gamma_1(\nu',\nu))\subseteq\dom(r^1_\nu)$ and for every
    
    $\beta\in\dom(\Gamma_1(\nu',\nu)),  r^1_{\nu,\beta}\restriction\nu'=\lusim{f}_{\nu',\Gamma_1(\nu',\nu)(\beta)}$.
    
    \item [$(4)_{(\nu',\nu)}$] $r_{<\nu'}=t_{<\kappa}\in V_{\nu'}$.
    
\end{enumerate}
\vskip 0.2 cm
Let $$B''=\{\nu \mid\exists(\nu',\nu)\in B'. r(\nu',\nu)\in G_\kappa*F_\kappa\}.$$
Since $B'\in U^2$ we have that $(\kappa,\kappa_1)\in j_2(B')$ and since $j_2(r)(\kappa,\kappa_1)=t\in j^*_2(G_\kappa*F_\kappa)=G_{\kappa_2}*F_{\kappa_2}$, we conclude that $B''\in W$. Also, $B''\subseteq B_0$ by clause $(1)$. 

We proceed by a density argument, recalling that by the definition of $G_2$, we have that $\l t_{<\kappa},t_\kappa\r\in G_\kappa*F_\kappa$. 
\begin{claim}
Let $D$ be the set of all conditions $q\in\mathcal{P}_{\kappa+1}$, such that exists $(\nu'_0,\nu_0),(\nu'_1,\nu_1)\in B', \ \nu'_1>\nu_0$ and a $\mathcal{P}_{\nu_0}-$name $\lusim{d}^{\nu_0}$ such that \begin{enumerate}
    \item [(a)] $r(\nu'_0,\nu_0),r(\nu'_1,\nu_1)\leq q$.
    \item [(b)] $q\Vdash \lusim{d}^{\nu_0}\in \lusim{A}\cap\Add(\nu_0,\phi''\nu_0)$.
    \item [(c)] $q\Vdash \forall \tau\in X_1^{\nu_0}.\lusim{h}_{\nu_1,s_\tau(\nu_1)}\restriction \nu_0=\lusim{d}^{\nu_0}_\tau.$
    
\end{enumerate}
Then $D$ is dense (open) above $\l t_{<\kappa},t_\kappa\r$ and thus $D\cap G_\kappa*F_\kappa\neq\emptyset$
\end{claim}
\begin{proof}
Work in $V$, let $\l t_{<\kappa},t_\kappa\r\leq p:=\l p_{<\kappa},p_\kappa\r \in \calP_{\kappa+1}$.  We will define two extensions $p\leq q\leq q^*$ which corresponds to the choice of $(\nu'_0,\nu_0),(\nu'_1,\nu_1)$ and such that $q^*\in D$. By definition of $\mathcal{P}_{\kappa+1}$, $p_{<\kappa}\Vdash p_\kappa\in \Add(\kappa,\kappa^+)$, by $\kappa-$cc of $\mathcal{P}_\kappa$, for some $Z\subseteq\kappa^+,\ Z\in V$, $|Z|<\kappa$ and some $\gamma<\kappa$, $p_{<\kappa}\Vdash \dom(p_\kappa)\subseteq \gamma\times Z$. Applying $j_2$, we have that $$j_2(p_{<\kappa})=p_{<\kappa}\Vdash\dom(j_2(p_\kappa))\subseteq j_2(\gamma\times Z)= \gamma\times j_2''Z\text{ and } j_2(p_{\kappa})_{j_2(\alpha)}=p_{\kappa,\alpha}\geq t_{\kappa,\alpha}$$ Combining with $(2c)$, we have both
$$p_{<\kappa}\Vdash Z\supseteq \supp(t_\kappa)\wedge \forall \beta\in Z.j_2(p_{\kappa})_{j_2(\beta)}\geq t_{\kappa,\beta}$$  $$t_{<\kappa_2}\Vdash \forall j_2(\tau)\in \supp(t_{\kappa_2})\cap j_2(Z). t_{\kappa_2,j_2(\tau)}\restriction \gamma= \lusim{f}_{\kappa,\tau}\restriction \gamma.$$
To reflect this, denote $\mu=(j_2\restriction (Z\cup \theta))^{-1}\in M_2$, then
$$\dom(\mu)=j_2(Z)\cup j_2''\theta, \ \rng(\mu)=Z\cup\theta, \ \mu\text{ is 1-1}.$$
and we can reformulate:
$$p_{<\kappa}\Vdash \mu''j_2(Z)\supseteq \supp(t_\kappa)\wedge \forall \beta\in j_2(Z).j_2(p_{\kappa})_{\beta}\geq t_{\kappa,\mu(\beta)}$$  $$t_{<\kappa_2}\Vdash \forall \tau\in \supp(t_{\kappa_2})\cap j_2(Z). t_{\kappa_2,\tau}\restriction \gamma= \lusim{f}_{\kappa,\mu(\tau)}\restriction \gamma.$$
Also, since we can find $\delta<\kappa$ such that $t_{<\kappa}\Vdash \phi''(\delta,\kappa)\cap  Z=\emptyset.$ There exists such $\delta$ since $|Z|<\kappa$, $t_{<\kappa}\Vdash |\supp(t_{\kappa})|<\kappa$ and by $\kappa-$cc of $\mathcal{P}_\kappa$.
 Recall that by the definition of $\mu_1$, $\phi''(\delta,\kappa)=\mu_1''\{s_\gamma(\kappa_1)\mid \gamma\in j_2(\phi)''(\delta,\kappa)\}$ and that $\mu''\supp(j_2(p_\kappa))=Z$. Therefore in $M_2$ we will have that
 $$p_{<\kappa}\Vdash [\mu_1''\{s_\gamma(\kappa_1)\mid \gamma\in j_2(\phi)''(\delta,\kappa)\}]\cap[ \mu''\supp(j_2(p_\kappa))]=\emptyset.$$ 
Let $\Gamma$ be such that $j_2(\Gamma)(\kappa,\kappa_1)=\mu$, there is a set $\overline{B}_0\subseteq B'$, $\overline{B}_0\in U^2$ such that for every $(\nu',\nu)\in\overline{B}_0$,
$$(i) \ \ p_{<\kappa}\Vdash \Gamma(\nu',\nu)''Z\supseteq \supp(r_{\nu'})\wedge  \forall\beta\in Z. p_{\kappa,\beta}\geq r_{\nu',\Gamma(\nu',\nu)(\beta)},$$ 
$$(ii) \ \ r_{<\kappa}\Vdash \forall \tau\in Z\cap \supp(r_{\kappa}). r_{\kappa,\tau}\restriction \gamma=\lusim{f}_{\nu',\Gamma(\nu',\nu)(\tau)}\restriction \gamma.$$
$$(iii) \ \  p_{<\kappa}\Vdash\Gamma_1(\nu',\nu)''\{s_\gamma(\nu)\mid \gamma\in \phi''(\delta,\nu')\}\cap [\Gamma(\nu',\nu)''\supp(p_\kappa)]=\emptyset.$$

Let us move to the choice of $(\nu'_0,\nu_0),(\nu'_1,\nu_1)$. In $V[G_{\kappa}*F_{\kappa}]$, there exists  $(\nu^0_0,\nu_0),(\nu^0_1,\nu_1)\in \overline{B}_0$ such that $r(\nu^0_0,\nu_0),r(\nu^0_1,\nu_1)\in G_{\kappa}*F_{\kappa}$ (hence they are compatible) such that $\nu^0_0>\delta,\gamma,\sup(\supp(p_{<\kappa}))$ and $\nu^0_1>\nu_0,\supp(r_{<\kappa}(\nu^0_0,\nu_0))$.  
In particular, in $V$ we can find $(\nu '_0,\nu_0),(\nu'_1,\nu_1)\in \overline{B}_0$ such that $r(\nu_0',\nu_0),r(\nu_1',\nu_1)$ are compatible, $\nu'_0>\delta,\gamma,\sup(\supp(p_{<\kappa}))$, and $\nu'_1>\nu_0,\sup(\supp(r_{<\kappa}(\nu_0',\nu_0)))$. Denote  $$r^0:=r(\nu'_0,\nu_0)=\l r^0_{<\nu'_0},r^0_{\nu'_0},r^0_{(\nu'_0,\kappa)},r^0_\kappa\r,$$
$$r^1:=r(\nu'_1,\nu_1)=(r^1_{<\nu'_1},r_{\nu'_1},r_{(\nu'_1,\nu_1)},\l r^{0,1}_{\nu_1},r^{1,1}_{\nu_1}\r,r^1_{(\nu_1,\kappa)},r^1_\kappa\r$$
 Let us define the first extension $q$, it has the form:
$$q=p_{<\kappa}{}^{\smallfrown}q_{\nu'_0}{}^{\smallfrown}r^0_{(\nu'_0,\kappa)}{}^{\smallfrown}q_\kappa$$
First, $q_{\nu'_0}$ is a $\mathcal{P}_{\nu'_0}-$name for a condition with $\supp(q_{\nu'_0})= \Gamma(\nu'_0,\nu_0)''Z$, by $(i)$ $\supp(q_{\nu'_0})\supseteq\supp(r^0_{\nu'_0})$. Set
$q_{\nu'_0,\Gamma(\nu'_0,\nu_0)(\beta)}= p_{\kappa,\beta}$. As for $q_\kappa$, we set it to be a $\mathcal{P}_\kappa-$name for $r^0_\kappa\cup p_\kappa$.

Once we will prove that $p_{<\kappa},r^0_{<\kappa}\leq q_{<\kappa}$, from $(i),(ii)$ it will follow that $q_{<\kappa}$ forces $q_\kappa$ to be a partial function. Indeed, for every $\beta\in \supp(r^0_\kappa)\cap Z$, $q_{<\kappa}$ will force
$$r^0_{\kappa,\beta}\restriction \gamma=\lusim{f}_{\nu',\Gamma(\nu'_0,\nu_0)(\beta)}\restriction \gamma\geq q_{\nu'_0,\Gamma(\nu'_0,\nu_0)(\beta)}=p_{\kappa,\beta}$$

Clearly $p\leq q$. To see that $r^0\leq q$, up to $\nu'_0$, we have that by $(4)_{(\nu'_0,\nu_0)}$ that $$q_{<\nu'_0}=p_{<\kappa}\geq t_{<\kappa}=r^0_{<\nu'_0}.$$ At $\nu'_0$, if $\alpha= \Gamma(\nu'_0,\nu_0)(\beta)$, then $(i)$  insures that $q_{\nu'_0,\alpha}=p_{\kappa,\beta}\geq r^0_{\nu',\alpha}$. Since in the interval $(\nu'_0,\kappa)$, $q$ and $r^0$ are the same, it follows that $q_{<\kappa}\geq r^0_{<\kappa}$ and at $\kappa$ it is clear that $q_{<\kappa}\Vdash r^0_\kappa\leq q_\kappa$. 

Next let us move to the choice of $\lusim{d}^{\nu_0}$. Since $r^0\leq q$ and $m_0\leq \l t_{<\kappa},t_\kappa\r\leq q\Vdash \nu_0\in \lusim{B}_0$, use the maximality principal to find a $\mathcal{P}_{\nu_0}-$name, $\lusim{d}^{\nu_0}$ such that $q$ forces $(b)$.\footnote{Since the tail forcing $\mathcal{P}_{[\nu_0,\kappa]}$ is $\nu_0-$closed, if there is such $d^{\nu_0}\in V[G_{\kappa}*F_\kappa]$ then $|d^{\nu_0}|<\nu_0$, hence $d^{\nu_0}\in V[G_{\nu_0}]$.} 

Define the final condition $q\leq q^*$,  $$q^*=q_{<\kappa}{}^{\smallfrown}q^*_{\nu'_1}{}^{\smallfrown}r^1_{(\nu_1',\kappa)}{}^{\smallfrown}q^*_\kappa.$$ The crucial point here is that by $(2b)_{(\nu'_1,\nu_1)}$ $$r^0_{<\kappa}\Vdash \nu_0^0=\lusim{f}_{\kappa,\nu_0}(0)=r^0_{\kappa,\nu_0}(0)=\nu_0'$$ and since $r^0\Vdash \nu_0\in \lusim{R}$ we have that $r^0\Vdash X_1^{\nu_0}\subseteq \phi''(\nu'_0,\nu_0)\subseteq\phi''(\nu'_0,\nu_1')$. By $(iii)$ we have that  $q_{<\kappa}\Vdash [\Gamma_1(\nu'_1,\nu_1)''\{s_\gamma(\nu_1)\mid \gamma\in X^{\nu_0}_1\}]\cap [\Gamma(\nu_1',\nu_1)''Z]=\emptyset$. This will permit to code $d^{\nu_0}$, let $$\supp(q^*_{\nu'_1})=[\Gamma_1(\nu'_1,\nu_1)''\{s_\gamma(\nu_1)\mid \gamma\in X^{\nu_0}_1\}]\uplus [\Gamma(\nu_1',\nu_1)''Z]$$ and
$$q^*_{\nu'_1,\alpha}=\begin{cases} q_{\kappa,\beta} &\exists \beta\in \Gamma(\nu_1',\nu_1)''Z. \alpha=\Gamma(\nu'_1,\nu_1)(\beta)\\
\lusim{d}^{\nu_0}_\tau & \exists \tau\in X^{\nu_0}_1.\alpha=\Gamma_1(\nu'_1,\nu_1)(s_\tau(\nu_1))\end{cases}$$ 
and $q^*_\kappa=q_\kappa\cup r^1_\kappa$. Note that if $\tau\in \supp(q_{\kappa})\cap\supp(r^1_{\kappa})$ then either $\tau\in \supp(r^0_{\kappa})\cap\supp(r^1_{\kappa})$, and $r^0_{\kappa},r^1_{\kappa}$ are forced to be compatible by $q_{<\kappa}$ and if $\tau\in Z\cap\supp(r^1_{\kappa})$ then the same argument as before works. 
We conclude that $r^0\leq q\leq q^*$, $r^1\leq q^*$, namely $(a)$. Finally, for every $\tau\in X^{\nu_0}_1$, $s_\tau(\nu_1)\in \dom(\Gamma_1(\nu'_1,\nu_1))$ and by $(3)_{(\nu'_1,\nu_1)}$ we have that $q^*$ forces that
$$\lusim{h}_{\nu_1,s_\tau(\nu_1)}\restriction \nu_0=\lusim{f}_{\nu'_1,\Gamma_1(\nu'_1,\nu_1)(s_\tau(\nu_1))}\restriction \nu_0\geq q^*_{\nu'_1,\Gamma_1(\nu'_1,\nu_1)(s_\tau(\nu_1))}=\lusim{d}^{\nu_0}_\tau$$


Then $p\leq q^*$ and $q^*\in D$ 
\end{proof}
By density, we can find such a condition $p^*\in G_\kappa*F_\kappa\cap D$ and points $(\nu'_0,\nu_0),(\nu_1',\nu_1)\in B'$ witnessing $p^*\in D$.
It follows that $r(\nu'_0,\nu_0),r(\nu'_1,\nu_1)\in G_\kappa*F_\kappa$, and by $(1)_{(\nu'_0,\nu_0)},(1)_{(\nu'_1,\nu_1)}$, $\nu_0,\nu_1\in B_0$. Extend $\l \l\r,B\r$ by $p^*=\l \nu_0,\nu_1,B_0\cap(\cap_{\tau\in\phi''\nu_0}A_\tau\r\setminus \nu_1+1\r$. By $(2b)_{(\nu'_1,\nu_1)}$, for every $\tau\in \phi''\nu_0\subseteq\phi''\nu'_1$, $f_{\kappa,\tau}(\nu_1)=r_{\kappa,\tau}(\nu_1)=1$, hence $\nu_1\in \cap_{\tau\in\phi''\nu_0}A_\tau$ and $p^*\Vdash n_\tau=\begin{cases} 0 &\tau\in X_0\\
1 & \tau\in X_1\end{cases}$. In other words,
since $\nu_0\in B_0$,  $$p^*\Vdash \forall\tau\in X_0. \lusim{f}^*\tau\restriction\nu_0=h_{\nu_0,s_\tau(\nu_0)}$$
$$p^*\Vdash \forall\tau\in X_1. \lusim{f}^*\tau\restriction\nu_1=h_{\nu_1,s_\tau(\nu_1)}$$
Let $d=(\lusim{d}^{\nu_0})_{G_{\nu_0}}\in \Add(\nu_0,\phi''\nu_0)\cap \mathcal{A}$, it follows that $p^*\Vdash \forall\tau\in X_0. \lusim{f}^*_\tau$ extends $d_\tau$, and by $(c)$ of the definition of $D$, $p^*\Vdash \forall\tau\in X_1 \lusim{f}^*_\tau$ extends $d_\tau$. Thus $p^*\Vdash d\in \lusim{G}^*\cap\mathcal{A}$. This concludes the genericity proof.
\end{proof}

\end{proof}
\section{The results where $2^\kappa=\kappa^{++}$}
\subsection{Strong  non-Galvin witnesses of length $2^\kappa=\kappa^{++}$}
In this section we produce a model with a non-Galvin ultrafilter with a strong  witnessing sequence of length $2^{\kappa}=\kappa^{++}$. This will of course require to violate \textrm{GCH} on a measurable cardinal and in turn to start with a stronger large cardinal assumption (see \cite{Gitik1989TheNO},\cite{mitchell_1984}).
We will follow a similar construction to the one given in the case of $\kappa^+$ addressed in previous sections. Indeed, instead of iterating $\Add(\alpha,\alpha^+)$ we will iterate $\Add(\alpha,\alpha^{++})$ aiming to force $\Add(\kappa,\kappa^{++})$, from which we will be able to define a non-Galvin ultrafilter and a strong witness of length $\kappa^{++}$ in a similar fashion to the one we have on $\kappa^+$, distinguishing between $\alpha$'s which are in the  image of the  second iteration and those which are in the image of the factor map. The difficulty is, as always, to extend a ground model embedding. By the large cardinal lower bound, we can no longer work with an ultrapower by an ultrafilter. The usual embedding to lift in the context of violation of GCH at measurables is a $(\kappa,\kappa^{++})$-extender ultrapower embedding, which we will use here. This makes the lifting argument more involved and the existence of generic filters for the iteration requires variations of \textit{Woodin's surgery method} (See \cite[Sec. 25]{CummingsHand}).
\begin{theorem}\label{Non-galvin for longer}
Assume GCH and that there is a $(\kappa,\kappa^{++})$-extender over $\kappa$ in $V$. Then there is a cofinality preserving forcing extension $V^*$ such that $V^*\models 2^\kappa=\kappa^{++}$, in $V^*$ there is a $\kappa$-complete ultrafilter $W$ over $\kappa$ which concentrates on regulars, extends $Cub_\kappa$, and has a strong witness of length $\kappa^{++}$ for the failure of Galvin's property.
\end{theorem}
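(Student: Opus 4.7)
The plan is to follow the blueprint of Theorem~\ref{universe theorem}, replacing $\Add(\alpha,\alpha^+)$ by $\Add(\alpha,\alpha^{++})$ throughout the Easton-support iteration and using the $(\kappa,\kappa^{++})$-extender $E$ in place of the normal measure. First I would force with the iteration $\langle\mathcal{P}_\alpha,\lusim{Q}_\beta\mid\alpha\leq\kappa+1,\beta\leq\kappa\rangle$ where $\lusim{Q}_\alpha=\Add(\alpha,\alpha^{++})$ at each inaccessible $\alpha$, and let $G_\kappa*F_\kappa$ be generic, writing $F_\kappa=\langle f_{\kappa,\gamma}\mid\gamma<\kappa^{++}\rangle$. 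Setting $V^*=V[G_\kappa*F_\kappa]$ and $A_\alpha=\{\nu<\kappa:f_{\kappa,\alpha}(\nu)=1\}$, the candidate strong witness will be $\langle A_\alpha\mid\alpha<\kappa^{++}\rangle$. Take $j_1=j_E:V\to M_1$, $\kappa_1=j_1(\kappa)$, and $k:M_1\to M_2$ the factor embedding corresponding to the iterated extender $j_2=k\circ j_1$, with $crit(k)=\kappa_1$.

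The essential work is lifting $j_1$ and $k$ to $V^*$. In $V^*$, I would first build an $M_1[G_\kappa*F_\kappa]$-generic $G_{(\kappa,\kappa_1)}$ for the tail of the iteration above $\kappa$: this is routine, since the tail is sufficiently closed in $M_1[G_\kappa*F_\kappa]$ and, under GCH with $|M_1|=\kappa^{++}$, only $\kappa^{++}$ dense sets need be met. The real difficulty is producing an $M_1[G_{\kappa_1}]$-generic $F_{\kappa_1}$ for $\Add(\kappa_1,\kappa_1^{++})^{M_1[G_{\kappa_1}]}$ that agrees with $j_1''F_\kappa$ on coordinates indexed by $j_1''\kappa^{++}$, while arranging $f_{\kappa_1,j_1(\alpha)}(\kappa)=\alpha$ so that the $A_\alpha$'s are captured by the derived measure. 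Because $M_1$ is only $\kappa$-closed under the $(\kappa,\kappa^{++})$-extender, the master-condition approach that worked for the normal measure in Theorem~\ref{universe theorem} collapses --- the sequence $j_1''F_\kappa$ has size $\kappa^{++}$ and cannot be collected inside $M_1$. The remedy is Woodin's surgery: first build an arbitrary generic $F'_{\kappa_1}$ in $V^*$ (using $<\kappa_1$-closure and the bound $\kappa^{++}$ on the number of dense sets), then surgically modify each $p\in F'_{\kappa_1}$ on $j_1''\kappa^{++}\cap\dom(p)$. The crucial size estimate is that $\dom(p)_{\leq\kappa}$ is bounded in $(\kappa_1^{++})^{M_1}$ while $j_1''\kappa^{++}$ is cofinal there, so $j_1''\kappa^{++}\cap\dom(p)_{\leq\kappa}\subseteq j_1''\theta_0$ for some $\theta_0<\kappa^{++}$; hence each condition is altered on at most $\kappa^+$ coordinates.

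Preservation of genericity should then reduce to a stronger analog of Lemma~\ref{less thank kappa many changes}: for each dense open $D\subseteq\Add(\kappa_1,\kappa_1^{++})^{M_1[G_{\kappa_1}]}$, the set $D^*$ of conditions $p$ such that every modification of $p$ on at most $\kappa^+$ coordinates of $\dom(p)$ still lies in $D$ is itself dense. The inner enumeration of candidate modifications now has length at most $\kappa^{++}$ under GCH, and the outer fusion must run through $\kappa^{++}$ stages to absorb all $\leq\kappa^+$-subsets of the eventual domain --- both swallowed by the $\kappa_1$-closure of $\Add(\kappa_1,\kappa_1^{++})$. The lifting of $k$ to $k^*$ proceeds symmetrically on the $M_2$-side: build $F_{\kappa_2}$ by surgery so that $f_{\kappa_2,k(\alpha)}\restriction\kappa_1=f_{\kappa_1,\alpha}$, $f_{\kappa_2,k(\alpha)}(\kappa_1)=1$ for $\alpha\in j_1''\kappa^{++}$, and $f_{\kappa_2,k(\alpha)}(\kappa_1)=0$ for $\alpha\in\kappa_1^{++}\setminus j_1''\kappa^{++}$. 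This last prescription is exactly what will make $\langle A_\alpha\mid\alpha<\kappa^{++}\rangle$ a strong witness.

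Finally, set $j_2^*=k^*\circ j_1^*$ and $W=\{X\subseteq\kappa:\kappa_1\in j_2^*(X)\}$. That $W$ is $\kappa$-complete with $[id]_W=\kappa_1$, contains $Cub_\kappa$, and concentrates on regulars goes through verbatim as in Claim~\ref{final claim}. For the strong witness property, given a size-$\kappa$ subfamily with distinct $\alpha_\xi$'s, a L\'{o}s-style computation in the extender ultrapower shows that $j_1^*(\xi\mapsto\alpha_\xi)(\eta)$ cannot equal $j_1(\beta)$ for any $\beta<\kappa^{++}$ when $\kappa\leq\eta<\kappa_1$ --- otherwise the function representing $\eta$ would have to be constant with value $f^{-1}(\beta)<\kappa$, forcing $\eta<\kappa$. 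Combined with the prescription that $f_{\kappa_2,k(\alpha)}(\kappa_1)=0$ outside $j_1''\kappa^{++}$, this yields $\kappa_1\notin j_2^*(A_{\alpha_\eta})$, which is the required strong-witness conclusion. The Woodin surgery step --- verifying density of $D^*$ when modifications touch $\kappa^+$ coordinates and $M_1$ carries only $\kappa$-closure --- is clearly the main technical obstruction, and will demand a more careful fusion than the one in Lemma~\ref{less thank kappa many changes}.
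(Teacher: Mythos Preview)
Your overall architecture is right, but the surgery step contains a genuine gap, and the encoding you propose is too coarse to recover $[id]_W=\kappa_1$.

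\textbf{The surgery estimate.} Your bound ``each condition is altered on at most $\kappa^+$ coordinates'' is not sharp, and the fusion you build around it cannot close the hole. The problem is not the density of your $D^*$ --- that part is plausible since $\kappa^{++}<\kappa_1$ --- but rather that the \emph{specific} modification you perform uses the set $j_1''\theta_0\cap\dom(p)_{\leq\kappa}$, which has size $\kappa^+$ and is defined from $j_1\restriction\theta_0$. Since $M_1=M_E$ is only closed under $\kappa$-sequences, this set need not lie in $M_1[G_{\kappa_1}]$, so the altered $p^*$ is not guaranteed to be a condition in $\Add(\kappa_1,\kappa_1^{++})^{M_1[G_{\kappa_1}]}$. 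Your $D^*$ quantifies over modifications that \emph{are} conditions; it says nothing about a $p^*$ that lives only in $V^*$.

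The correct estimate is $\kappa$, and it comes from the extender structure rather than from cofinality in $j_1(\kappa^{++})$. Write $p=j_E(P)(a)$ with $a\in[\kappa^{++}]^{<\omega}$ and $P:[\kappa]^{|a|}\to Q_\kappa$. If $\langle\gamma,j_1(\beta)\rangle\in\dom(p)$ with $\gamma<\kappa$, then $M_E\models\exists x\,\langle\gamma,j_E(\beta)\rangle\in\dom(j_E(P)(x))$, so by elementarity $\langle\gamma,\beta\rangle\in\dom(P(x))$ for some $x\in[\kappa]^{|a|}$; hence such $\beta$ range over a set of size $\leq\kappa$. A separate cofinality argument handles the row $\gamma=\kappa$. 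With only $\kappa$ modifications per condition, $\kappa$-closure of $M_1$ puts $p^*$ back into $M_1[G_{\kappa_1}]$, and the \emph{standard} Woodin surgery goes through --- no strengthened Lemma~\ref{less thank kappa many changes} is needed.

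\textbf{The encoding.} Setting $f_{\kappa_2,k(\alpha)}(\kappa_1)\in\{0,1\}$ suffices for $A_\alpha\in W$ and for the non-Galvin dichotomy, but it does \emph{not} let you prove $j_W=j_2^*$ (equivalently $[id]_W=\kappa_1$). In the $\kappa^+$ case this worked because $M_{U^2}$ is generated by the pair $(\kappa,\kappa_1)$. Here $M_2$ is an iterated extender ultrapower whose generators include arbitrary $\alpha<(\kappa_1^{++})^{M_1}$, and to show the factor map $k_W:M_W\to M_2^*$ is onto you must recover each such $\alpha$ as $j_2^*(g)(\kappa_1)$ for some $g\in V^*$. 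The paper solves this by encoding the ordinal into the value: set $f_{\kappa_2,j_{2,1}(\alpha)}(\kappa_1)=\alpha\cdot 2+1$ when $\alpha\in j_1''\kappa^{++}$ and $\alpha\cdot 2$ otherwise, then define $A_\alpha=\{\nu:f_{\kappa,\alpha}(\nu)\text{ is odd}\}$. Now $\alpha=\lfloor f_{\kappa_2,j_{2,1}(\alpha)}(\kappa_1)/2\rfloor$ is recoverable, the onto argument for $k_W$ goes through, and the parity still records the $j_1''\kappa^{++}$ dichotomy needed for the strong witness. Without this (or an equivalent coding), your appeal to ``Claim~\ref{final claim} verbatim'' for $[id]_W=\kappa_1$ does not go through.
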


\begin{proof}
Let $E$ be a $(\kappa,\kappa^{++})-$ extender.
Let $j_1=j_E:V\to M_E=:M_1$ be its ultrapower embedding with $crit(j_E)=\kappa$ and ${}^\kappa M_E\subseteq M_E$. Denote by $E_\alpha$ the ultrafilter $$E_\alpha:=\{X\subseteq \kappa\mid \alpha\in j_E(X)\}$$
Denote $U:=E_\kappa$  the normal ultrafilter and let $k:M_U \to M_E$ be the factor map defined by setting $k(j_U(f)(\kappa))=j_E(f)(\kappa)$ such that $j_E=k\circ j_U$.
Define an Easton support iteration
$\l \calP_\alpha, \lusim{Q}_\beta\mid \alpha\leq\kappa+1, \beta\leq\kappa\r$
as follows:
\\$\lusim{Q}_\beta$ is trivial unless $\beta$ is  inaccessible, in which case $Q_{\beta }=\Add(\beta, \beta^{++})$.

Let $G_{\kappa+1}:=G_\kappa*g_\kappa$ be a $V$-generic subset of $\calP_{\kappa+1}=\calP_\kappa*\lusim{Q}_\kappa$. Keeping similar notations to those from previous sections, let $\l f_{\kappa,\alpha} \mid \alpha<\kappa^{++}\r $ be the Cohen generic functions from
$\kappa$ to $2$ introduced by $g_\kappa$.

Now we apply Woodin's argument (see \cite[Section 25]{CummingsHand}, and Ben Shalom \cite{shalom2017woodin} for constructing generics without additional forcing) to see that there will be $G_{j_E(\kappa)+1}*H^*\subseteq j_E(\calP_{\kappa+1})*\mathbb{S}$ in $V^*_1:=V[G_{\kappa+1}][H]$, where $H\subseteq \mathbb{S}$ is a $V[G_{\kappa+1}]$-generic filter, where $\mathbb{S}_0$ is some $\kappa^+$-distributive in $V[G_{\kappa+1}]$ (In the case of Ban-Shalom, there is no need for $H^*$ and we can work directly in $V[G_{\kappa+1}]$) generic over $M_E$ and an elementary embedding
$$j^*_1: V^*_1\to M_E[G_{j_E(\kappa)+1}*f^*]$$ which extends $j_1$.
Recall that the generic filter constructed for $j_1(Q_\kappa)$ is obtained by a surgery argument,  making small changes on an $M_1[G_{j_1(\kappa)}]$-generic filter $f$ to be compatible with $j''_1g_\kappa$. For our purposes, we need some additional changes to be made, for every $p\in f$ we change $p$ to $p^*$ such that $\dom(p^*)=\dom(p)$ and $$p^*(\l\gamma,\alpha\r)=\begin{cases}f_\beta(\gamma) & \gamma<\kappa \wedge \alpha=j_1(\beta)\\ \beta & \gamma=\kappa\wedge \alpha=j_1(\beta)\\
p(\l\gamma,\alpha\r) & else\end{cases}$$ 
To see that $p$ was only changed at $\kappa$-many places, find $a\in[\kappa^{++}]^{<\omega}$ such that $j_E(P)(a)=p$, where $P:\kappa^{|a|}\rightarrow Q_\kappa$. By elementarity, for every $\l \alpha,j_1(\beta)\r\in\kappa\times j_1''\kappa^{++}\cap\dom(p)$, there is $x\in[\kappa]^{|a|}$ such that $\l\alpha,\beta\r\in \dom(P(x))$. It follows that   $|\kappa\times j_1''\kappa^{++}\cap\dom(p)|\leq\kappa$. Moreover,  $|\{\kappa\}\times j''_1\kappa^{++}\cap\dom(p)|\leq\kappa$, since otherwise there would be some $\alpha<\kappa^{++}$ such that $$\cf(\alpha)=\kappa^+\text{ and  }\sup\{j_E(\beta)\mid \l\kappa,j_E(\beta)\r\in\dom(p)\}=j_E(\alpha).$$
But $|\dom(p)|^{M_1}<j_1(\kappa)$ and $\cf^{M_1}(j_1(\alpha))=j_1(\kappa)^+$ which is a contradiction. Hence $p^*\in M_1[G_{j_1(\kappa)}]$ since we have only changed $p$ at $\kappa$-many values and ${}^\kappa M_1[G_{j_1(\kappa)}]\subseteq M_1[G_{j_1(\kappa)}]$. 

The argument that such changes do not affect the genericity is the same as in \cite{CummingsHand}. So we additionally obtain that 
$f_{\kappa_2,\alpha}(\kappa)=\alpha,$ for every $\alpha<\kappa^{++}$.

We also claim that $j^*_1$ is actually the ultrapower embedding by the normal ultrafilter
$$U^*=\{X\subseteq \kappa\mid \kappa\in j^*_1(X)\}$$ extending $U$. To see this, consider  $k^*:M_{U^*}\rightarrow M_1[G_{j_1(\kappa)+1}*H^*]$ defined by $k^*([f]_{U^*})=j^*_1(f)(\kappa)$, which is clearly elementary. To see that $k^*=id$, let us prove that $k^*$ is onto. Fix $A=(\lusim{A})_{G_{j_1(\kappa)+1}*H^*}\in M_1[G_{j_1(\kappa)+1}]$ and let $f\in V$, $a=\{\alpha_1,...,\alpha_r\}\in[\kappa^{++}]^{<\omega}$ be such that $j_1(f)(a)=\lusim{A}$.  Define in $V[G_{\kappa+1}]$ the function $f^*(x)=(f(\{f_{\alpha_1}(x),...,f_{\alpha_r}(x)\}))_{G_{\kappa+1}*H}$. Then
$$k^*(j_{U^*}(f^*)(\kappa))=j^*_1(f^*)(\kappa)=(j_1(f)(\{j_1^*(f_{\alpha_1})(\kappa),....,j^*_1(f_{\alpha_2})(\kappa)\}))_{G_{j_1(\kappa)+1}*H^*}$$
$$=(j_1(f)(a))_{G_{j_1(\kappa)+1}}=(\lusim{A})_{G_{j_1(\kappa)+1}*H^*}=A$$

We would like now to construct a $\kappa-$complete ultrafilter $W\in V[G_{\kappa+1}]$ over $\kappa$ which includes $Cub_\kappa$ and the family
$\l A_\alpha \mid \alpha<\kappa^{++}\r$ which is a strong witness that $W$ fails to satisfy the Galvin Property. Set
$$A_\alpha:=\{\nu<\kappa\mid f_\alpha(\nu) \text{ is odd}\},$$
 for every $\alpha<\kappa^{++}$.

Consider the second ultrapower (of $V$) by $E$, i.e., $\Ult(M_E, j_E(E))$. In order to simplify the notation let us denote $M_E$ by $M_1$ and $\Ult(M_1, j_1(E))$ by $M_2$ and $j_{2,1}:=j_{j_1(E)}:M_1\rightarrow M_2$.
Also, let $\kappa_1=j_1(\kappa), E_1=j_1(E),$ and $\kappa_2=j_{2,1}(\kappa_1)$.
 Let $j_2:V\to M_2$ be the composition of $j_1$ with $j_{2,1}$.

Work in $M_1[G_{\kappa_1+1}*H^*]$ apply there the Woodin argument to $E_1$. There will be $G_{\kappa_2+1}*H^{**}\subseteq j_2(P_{\kappa}*Q_\kappa*\mathbb{S}_0)$ (in $M_1[G_{\kappa_1+1}*H^*]$) generic over $M_2$ and an elementary embedding
$$j_{2,1}^*: M_1[G_{\kappa_1+1}*H^*]\to M_2[G_{\kappa_2+1}*H^{**}]$$ which extends $j_{E_1}$. Additionally, for every $\alpha<(\kappa_1^{++})^{M_1}$ let us arrange the following:

\begin{enumerate}
  \item $f_{\kappa_2,j_{2,1}(\alpha)}(\kappa_1)$ is odd, if $\alpha \in j_E{}''\kappa^{++}$,
  \item $f_{\kappa_2, j_{2,1}(\alpha)}(\kappa_1)$ is an even, if $\alpha \in (\kappa_1^{++})^{M_1}\setminus j_E{}''\kappa^{++}$.
  \item $f_{\kappa_2,\kappa_1}(\kappa_1)=\kappa$.
\end{enumerate}
The point being that this requires only small changes of conditions in $(\Add(\kappa_2, \kappa_2^{++}))^{M_2}$, and so preserves the genericity.
\\Namely, given $p \in  (\Add(\kappa_2, (\kappa_2)^{++}))^{M_2}$, define $p^*$ such that $\dom(p^*)=\dom(p)$ and
$$p^*(\l\gamma,\alpha\r)=\begin{cases} f_{\kappa_1,\beta}(\gamma) & \gamma<\kappa_1\wedge\exists \beta<\kappa_1^{++} \alpha=j_{2,1}(\beta)\\
\beta\cdot 2+1 & \gamma=\kappa_1\wedge \exists\beta\in j_1''\kappa^{++}.\alpha=j_{2,1}(\beta)\\ \beta\cdot 2  & \gamma=\kappa_1\wedge \exists\beta\in\kappa_1^{++}\setminus j_1''\kappa^{++}.j_{2,1}(\beta)=\alpha\\
\kappa & \gamma=\alpha=\kappa_1\\
p(\l\gamma,\alpha\r) & otherwise\end{cases}$$
In $V[G_{\kappa+1}*H]$, $|\supp(p)\cap j_2^{*''}\kappa^{++}|\leq \kappa$ and $M_1[G_{\kappa_1+1}*H^*]$ is closed under $\kappa$-sequences hence $p^*\in M_1$. The argument we have seen before applied in $M_1[G_{\kappa_1+1}*H^{*}]$ shows that  $$M_1[G_{\kappa_1+1}^*]\models |\dom(p) \cap (\kappa_1+1)\times j''_{2,1}{}(\kappa_1^{++})^{M_1[G_{\kappa_1+1}]}|\leq \kappa_1.$$
 This implies that $p^*\in M_2[G_{\kappa_2+1}*H^{**}]$ since $M_2[G_{\kappa_2+1}*H^{**}]$ is closed under $\kappa_1$-sequences from $M_1[G_{\kappa_1+1}*H^*]$. Then the embedding $j_2:V\to M_2$ extends to
$$j^{*}_2: V[G_{\kappa+1}*H^*]\to M_2[G_{\kappa_2+1}*H^{**}].$$
Define now
$$W=\{X\subseteq \kappa \mid \kappa_1 \in j_2^{*}(X)\}.$$
\begin{claim}\label{final claim 2}
\begin{enumerate}
    \item $j_W=j^*_2$, $[id]_W=\kappa_1$, $U^*\leq_{R-K}W$.
    \item $Cub_\kappa\subseteq W$, $\{\alpha<\kappa\mid \cf(\alpha)=\alpha)\}\in W$.
    \item The sequence $\l A_\alpha\mid \alpha<\kappa^{++}\r$ is a strong witness for $\neg\text{Gal}(W,\kappa,\kappa^{++})$, where
    $$A_\alpha:=\{\nu<\kappa\mid f_{\kappa,\alpha}(\nu)\text{ is odd}\}$$
\end{enumerate}
\end{claim}
\begin{proof}
Indeed  $Cub_\kappa\subseteq W$ and $\{\alpha<\kappa\mid \cf(\alpha)=\alpha\}\in W$, is the same as in Claim \ref{final claim} from the last section.
 To see $(1)$, we let $k_W:M_W\rightarrow M_2[j_2^*(G)]$ be the usual factor map $k_W([f]_W)=j^*_2(f)(\kappa_1)$ and we prove that $k_W=id$ by proving that $k_W$ is onto. Let $A\in M_2[G_{\kappa_2+1}*H^{**}]$, then $A=(\lusim{A})_{G_{\kappa_2+1}*H^{**}}$ where $\lusim{A}\in M_2$ is a $\mathcal{P}_{\kappa_2+1}*j_2(\mathbb{S}_0)$-name. Since $j_{2,1}$ is an $(\kappa_1,\kappa_1^{++})-$extender ultrapower, there is $f\in M_1$ and $a\in[\kappa_1^{++}]^{<\omega}$ such that $\lusim{A}=j_{2,1}(f)(a)$. Suppose that $a=\{\alpha_1,..,\alpha_n\}$ is an increasing enumeration. Then by construction, $f_{\kappa_2,j_{2,1}(\alpha_i)}(\kappa_1)\in\{\alpha_i\cdot2,\alpha_i\cdot 2+1\}$. In particular we derive $\alpha_i$ from $f_{\kappa_2,j_{2,1}(\alpha_i)}(\kappa_1)$
 \footnote{ An easy transfinite induction, proves that if an ordinal $\gamma=\beta\cdot 2$ or $\gamma=\beta \cdot2+1$, then $\beta$ is unique, we denote  $\beta=\lfloor \frac{\gamma}{2}\rfloor$.}. Define $g_{\alpha_i}:\kappa_1\rightarrow\kappa_1\in M_1[G_{\kappa_1+1}*H^*]$ by $g_{\alpha_i}(\alpha)=\lfloor \frac{f_{\kappa_1,\alpha_i}(\alpha)}{2}\rfloor$, then $j^*_{2,1}(g_{\alpha_i})(\kappa_1)=\lfloor \frac{f_{\kappa_2,j_{2,1}(\alpha_i)}(\kappa_1)}{2}\rfloor=\alpha_i$. 
 Finally, let $g(\alpha)=f(g_{\alpha_1}(\alpha),...,g_{\alpha_n}(\alpha))$.
 Then, $$j^*_{2,1}(g)(\kappa_1)=j_{2,1}(f)(j^*_{2,1}(g_{\alpha_1})(\kappa_1),...,j^*_{2,1}(g_{\alpha_n})(\kappa_1))=j_{2,1}(f)(a)=\lusim{A}$$
 We already know that $M_1[G_{\kappa_1+1}*H^*]$ is the ultrapower by $U^*$, hence $g=j^*_1(h)(\kappa)$ for some $h\in V[G_{\kappa+1}*H]$ and in turn $\lusim{A}=j^*_2(h)(\kappa,\kappa_1)$. Finally, we made sure that $\kappa$ is expressible by $\kappa_1$, so we define in $V[G_{\kappa+1}*H]$ $f^*:\kappa\rightarrow\kappa$ by
 $$f^*(\alpha)=(h(f_{\kappa,\alpha}(\alpha),\alpha))_G$$
 It follows that:
 $$k_W([f^*]_W)=j_2(f^*)(\kappa_1)=(j_2^*(h)(f_{\kappa_2,\kappa_1}(\kappa_1),\kappa_1))_{G_{\kappa_2+1}*H^{**}}$$
 $$=(j^*_2(h)(\kappa,\kappa_1))_{G_{\kappa_2+1}*H^{**}}=(\lusim{A})_{G_{\kappa_2+1}*H^{**}}=A,$$
 this concludes $(1)$,
 $(2),(3)$ is completely analogous to Claim \ref{final claim}.
\end{proof}

\end{proof}

\subsection{Adding $\kappa^{++}$-Cohens using Prikry forcing}

The construction of the previous section can be modified to obtain a model in which there is a $\kappa$-complete ultrafilter $U^*$ over $\kappa$ such that $\Pri(U^*)$ adds a generic filter for $\Add(\kappa,\kappa^{++})$. This will require the violation of $SCH$ and in turn larger cardinals \cite{Gitikstrength},\cite{Mit}. 

\begin{theorem}\label{Many-Cohens}
Assume $GCH$ and that $E$ is a $(\kappa,\kappa^{++})-$ extender in $V$. Then there is a cofinality preserving forcing extension $V^*$ in which $2^{\kappa}=\kappa^{++}$ and a non-Galvin ultrafilter $W\in V^*$ such that forcing with $\Pri(W)$ introduces a $V^*$-generic filter for $Cohen^{V^*}(\kappa,\kappa^{++})$-generic filter.
\end{theorem}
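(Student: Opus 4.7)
The plan is to merge the extender-based construction from Theorem \ref{Non-galvin for longer} with the lottery/canonical-function technique from Theorem \ref{main theorem}. I would iterate with Easton support, where at each inaccessible $\alpha<\kappa$ we force the lottery sum $\Lot(\Add(\alpha,\alpha^{++}),\Add(\alpha,\alpha^{++})\times\Add(\alpha,\alpha^{++}))$, and at $\kappa$ itself we force $\Add(\kappa,\kappa^{++})$. Let $G_\kappa*F_\kappa$ be generic, set $V^*=V[G_\kappa*F_\kappa]$, and for inaccessibles $\alpha\in L_0:=\{\alpha<\kappa:\text{the }2\text{-copy lottery was chosen}\}$ write $F_\alpha=\langle f_{\alpha,\gamma}\mid\gamma<\alpha^{++}\rangle$ and $H_\alpha=\langle h_{\alpha,\gamma}\mid\gamma<\alpha^{++}\rangle$ for the two blocks of Cohen functions added.

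Next I would extend $j_1=j_E$ and $j_{2,1}=j_{j_1(E)}$ in $V^*$ using the Woodin surgery argument as in Theorem \ref{Non-galvin for longer}, but with extra alterations in the spirit of the proof of Theorem \ref{main theorem}. On the $M_1$-side, when building the generic for $j_1(\Add(\kappa,\kappa^{++}))^2$ at $\kappa_1$ (choosing the $2$-copy branch of the lottery at $\kappa_1$), surger the two generics $F'_{\kappa_1}\times H'_{\kappa_1}$ to $F_{\kappa_1}\times H_{\kappa_1}$ so that for every $\alpha<\kappa^{++}$ we have $f_{\kappa_1,j_1(\alpha)}\restriction\kappa=h_{\kappa_1,j_1(\alpha)}\restriction\kappa=f_{\kappa,\alpha}$ and $f_{\kappa_1,j_1(\alpha)}(\kappa)=\alpha$. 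On the $M_2$-side, surger $F'_{\kappa_2}$ to $F_{\kappa_2}$ so that for $\alpha<\kappa_1^{++}$, $f_{\kappa_2,k(\alpha)}\restriction\kappa_1=f_{\kappa_1,\alpha}$; $f_{\kappa_2,k(\alpha)}(\kappa_1)$ is odd if $\alpha\in j_1''\kappa^{++}$, even otherwise; and $f_{\kappa_2,\kappa_1}(\kappa_1)=\kappa$. Each surgery only affects $\leq\kappa$-many (resp.\ $\leq\kappa_1$-many) coordinates by the same extender-representation counting argument as in Theorem \ref{Non-galvin for longer}, so preservation of genericity goes through. Let $j_1^*,j_{2,1}^*,j_2^*$ be the resulting extensions and define $W=\{X\subseteq\kappa:\kappa_1\in j_2^*(X)\}$ and $A_\alpha=\{\nu<\kappa:f_{\kappa,\alpha}(\nu)\text{ is odd}\}$ for $\alpha<\kappa^{++}$. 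An argument analogous to Claims \ref{final claim} and \ref{final claim 2} shows $j_W=j_2^*$, $[id]_W=\kappa_1$, $W\supseteq Cub_\kappa$, $W$ concentrates on regulars, and $\langle A_\alpha\mid\alpha<\kappa^{++}\rangle$ is a strong non-Galvin witness for $W$ of length $\kappa^{++}$.

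For the main Cohen-adding claim, let $H\subseteq\Pri(W)$ be $V^*$-generic with Prikry sequence $\langle c_n\mid n<\omega\rangle$. By Lemma \ref{easy direction of Mathias} and since $L_0\in W$, we may assume $c_n\in L_0$ for all $n$, so the $H_{c_n}$'s are available. Fix in $V^*$ a sequence $\langle s_\alpha\mid\alpha<\kappa^{++}\rangle$ of canonical functions with $s_\alpha(\nu)<\nu^{++}$ (these exist in $V^*$ since $2^\kappa=\kappa^{++}$ holds there). For $\alpha<\kappa^{++}$ define $n_\alpha=\min\{n:\{c_m\mid m\geq n\}\subseteq A_\alpha\}$ and put
\[
f^*_\alpha\restriction c_{n_\alpha}=h_{c_{n_\alpha},s_\alpha(c_{n_\alpha})},\qquad f^*_\alpha\restriction[c_{m-1},c_m)=h_{c_m,s_\alpha(c_m)}\restriction[c_{m-1},c_m)\text{ for }m>n_\alpha.
\]
Let $G^*$ be the filter on $\Add(\kappa,\kappa^{++})^{V^*}$ induced by $F=\langle f^*_\alpha\mid\alpha<\kappa^{++}\rangle$.

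The heart of the argument, and the main obstacle, is the genericity of $G^*$. I would imitate the density argument of Theorem \ref{main theorem}: given a maximal antichain $\mathcal{A}\in V^*$, use the $\kappa^{++}$-c.c.\ of $\calP_\kappa*\lusim{Q}_\kappa$ and $|\mathcal{A}|\leq\kappa^{++}$ to find $Y\in V$, $|Y|=\kappa^+$, with $\mathcal{A}\subseteq\Add(\kappa,Y)^{V^*}$ and $\mathcal{A}\in V[G_\kappa*F_\kappa\restriction Y]$ (so fix a bijection $\phi\in V$, $\phi:\kappa^+\to Y$). Build an $\in$-chain $\langle N_\beta\mid\beta<\kappa\rangle$ of elementary submodels capturing $\phi$, $\mathcal{A}$, the $s_\alpha$'s, with $\gamma_\beta:=N_\beta\cap\kappa$ regular, $N_\beta^{<\gamma_\beta}\subseteq N_\beta$, and $s_\rho(\mu)<s_\delta(\mu)$ for $\rho<\delta$ in $\phi''\gamma_\beta$, $\mu\geq\gamma_\beta$. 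For $\nu_0$ in the club $C=\{\beta:\gamma_\beta=\beta\}$ intersected with the appropriate $W$-large reflection sets $E$ and $R$, the transitive collapse argument of Claim \ref{condition in model} produces a condition $d^{\nu_0}\in N_{\nu_0}\cap\mathcal{A}$ extended by $\langle h_{\nu_0,s_\tau(\nu_0)}\mid\tau\in\phi''\nu_0\rangle$. Then, working with a $V$-name $\lusim{B_0}\in V$ for the relevant $W$-large set and a condition $t\in G_{\kappa_2+1}*H^{**}$ forcing $\kappa_1\in j_2^*(\lusim{B_0})$ together with the normalized surgery values, one represents $t$ as $j_2(r)(\kappa,\kappa_1)$ by extender functions and reflects the six clauses $(1)-(4),(2a)(2b)(2c),(3)$ to get a $U_\kappa\times U_{\kappa_1}$-large set $B'$ of pairs $(\nu',\nu)$. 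The crucial coding step uses the choice $f_{\kappa_2,\kappa_1}(\kappa_1)=\kappa$ to recover $\nu_0'$ from $\nu_0$, together with the disjointness clause $(iii)$, so that two well-chosen pairs $(\nu_0',\nu_0),(\nu_1',\nu_1)\in B'$ support a common extension $q^*$ that extends $\l\l\r,B\r$ to $\langle\nu_0,\nu_1,\ldots\rangle$, forces $n_\tau=0$ for $\tau\in X_0^{\nu_0}$ and $n_\tau=1$ for $\tau\in X_1^{\nu_0}$, and codes $d^{\nu_0}$ into the generic $H_{\nu_1}$ at the coordinates $s_\tau(\nu_1)$, $\tau\in X_1^{\nu_0}$. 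The two principal difficulties will be (i) ensuring that the Woodin surgery at the extender step is compatible with the additional value-prescriptions required for the $\kappa^{++}$-scale construction, and (ii) adapting the factor-map/extender-representation counting of coordinates to the length-$\kappa^{++}$ scale so that the reflection giving $B'$ still lies in the respective normal measure, which is the technically heaviest point.
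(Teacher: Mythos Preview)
Your overall architecture is right and matches the paper's: the lottery iteration with $\Add(\alpha,\alpha^{++})$ versus $\Add(\alpha,\alpha^{++})^2$, the two-step extender lift with Woodin-style surgery, the definition of $W$ and of the sets $A_\alpha$, and the density argument via elementary submodels and a reflected $W$-large set $B'$. The one substantive point where your plan diverges from the paper, and where it breaks, is the use of an external sequence $\langle s_\alpha\mid\alpha<\kappa^{++}\rangle$ of ``canonical functions'' to index into the $H$-blocks.

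The role of the canonical functions in Theorem~\ref{main theorem} is not merely that they are eventually increasing; the crucial identity is $j_2(s_\beta)(\kappa_1)=j_1(\beta)$, which is what links the surgery $h_{\kappa_1,j_1(\beta)}\restriction\kappa=f_{\kappa,\beta}$ to the reflected index $s_\beta(\nu_1)$. This identity holds because $s_\beta$ represents $\beta$ in any \emph{normal} ultrapower, and in Theorem~\ref{main theorem} the second step $k=j_{j_1(U)}$ is normal at $\kappa_1$. In the present setting the second step is the extender embedding $j_{2,1}=j_{j_1(E)}$, and the seed $\kappa_1$ only recovers its normal factor, so functions $\kappa\to\kappa$ (or even $\kappa\to\kappa^{++}$) evaluated at the seed $\kappa_1$ can represent at most the ordinals below $(\kappa_1^+)^{M_1}$; for $\alpha\geq\kappa^+$ you cannot arrange $j_2(s_\alpha)(\kappa_1)=j_1(\alpha)$. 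A mere scale in $\prod_\nu\nu^{++}$ (which is all that $2^\kappa=\kappa^{++}$ gives you) has no such representation property, so your clause~(3) and its reflection $(3)_{\vec\nu}$ do not go through. Relatedly, your surgery of $H_{\kappa_1}$ at index $j_1(\alpha)$ and of $F_{\kappa_2}$ at $\kappa_1$ recording only \emph{parity} is not enough.

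The paper's fix is to dispense with external canonical functions altogether and let the Cohen functions themselves play that role. The $M_2$-side surgery sets $f_{\kappa_2,j_{2,1}(\alpha)}(\kappa_1)=\alpha\cdot 2+1$ for $\alpha\in j_1''\kappa^{++}$ and $=\alpha\cdot 2$ otherwise (encoding the \emph{ordinal} $\alpha$, not just its parity), and the $H$-surgery is $h_{\kappa_1,\,j_1(\alpha)\cdot 2+1}\restriction\kappa=f_{\kappa,\alpha}$. One then defines
\[
f^*_\alpha \;=\; h_{c_{n_\alpha},\,f_{\kappa,\alpha}(c_{n_\alpha})}\;\cup\;\bigcup_{n>n_\alpha} h_{c_n,\,f_{\kappa,\alpha}(c_n)}\restriction[c_{n-1},c_n),
\]
using $f_{\kappa,\alpha}(c_n)$ as the index into $H_{c_n}$ (together with the $W$-large set $L_{1,\alpha}=\{\nu:f_{\kappa,\alpha}(\nu)<\nu^{++}\}$ to make this a legal index). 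The identity linking surgery to reflection is now built in by construction: $j_2^*(f_{\kappa,\alpha})(\kappa_1)=f_{\kappa_2,j_2(\alpha)}(\kappa_1)=j_1(\alpha)\cdot 2+1$, so the index $f_{\kappa,\alpha}(\nu_1)$ reflects exactly the surgered coordinate. Once you make this change, your outline is essentially the paper's proof.

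One small correction: $\Add(\kappa,\kappa^{++})^{V^*}$ is $\kappa^+$-c.c., so $|\mathcal A|\le\kappa$ and one takes $|Y|=\kappa$ with $\phi:\kappa\to Y$, exactly as in the $\kappa^+$-case; your ``$\kappa^{++}$-c.c., $|Y|=\kappa^+$'' should be adjusted accordingly.
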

\begin{proof}

Let $j_1:V\to M_E=:M_1$ be the ultrapower embedding of $E$ with $crit(j_1)=\kappa$ and ${}^\kappa M_1\subseteq M_1$ and $\kappa_1=j_1(\kappa)$. Denote by $E_\alpha$ the ultrafilter $\{X\subseteq \kappa\mid \alpha\in j_E(X)\}$. As before, denote $E_\kappa$ by $U$ and let $k:M_U \to M_E$ be defined by setting $k(j_U(f)(\kappa))=j_E(f)(\kappa)$. 
Define an Easton support iteration
$\l \calP_\alpha, \lusim{Q}_\beta\mid \alpha\leq\kappa+1, \beta<\kappa\r$ as follows:

$\lusim{Q}_\beta$ is trivial unless $\beta$ is  inaccessible. If $\beta<\kappa$ is inaccessible, then $$\lusim{Q}_\beta=\Lot(\Add(\beta, \beta^{++}),\Add(\beta, \beta^{++})\times \Add(\beta, \beta^{++}))$$
Over $\kappa$, we let $\lusim{Q}_\kappa=\Add(\kappa, \kappa^{++})$.

Let $G_{\kappa+1}=G_\kappa*F_\kappa$ be a $V$-generic filter of $\calP_{\kappa+1}$. We denote by $F_\alpha:=\l f_{\alpha, \gamma}\mid \gamma<\alpha^{++}\r$ the generic Cohen function if $\Add(\alpha,\alpha^{++})$ was forced in $G_\kappa$ and by $$F_\alpha:=\l f_{\alpha, \gamma}\mid \gamma<\alpha^{++}\r,\ H_\alpha:=\l h_{\alpha,\gamma}\mid \gamma<\alpha^{++}\r$$ if $\Add(\alpha,\alpha^{++})\times \Add(\alpha,\alpha^{++})$ was. The elementary embedding
$j_1$ extends to $j^*_1:V[G_{\kappa+1}]\to M_1[G_{\kappa_1+1}]$ such that at $\kappa$ we forced one block of Cohen's,  $\Add(\kappa,\kappa^{++})$, and for every $\alpha<\kappa^{++}$, $$f_{\kappa_1,j_1(\alpha)}(\kappa)=\alpha.$$ Indeed, in the Woodin and Ben-Shalom argument we first build the generic $G_{\kappa_1}$ up to $\kappa_1$ not including $\kappa_1$ in the same standard fashion as in \cite{CummingsHand}.
The original construction of Woodin or Ben-Shalom of the Cohen generic $F_{\kappa_1}$ which is $M_1[G_{\kappa_1}]$-generic for $\Add(\kappa_1,\kappa_1^{++})^{M_1[G_{\kappa_1}]}$ applies in our case, as it only uses the fact that $M_1[G_{\kappa_1}]$ is closed under $\kappa$-sequences and properties of
$\Add(\kappa_1,\kappa_1^{++})$. Since $$\Add(\kappa_1,\kappa_1^{++})\simeq \Add(\kappa_1,\kappa_1^{++})\times \Add(\kappa_1,\kappa_1^{++}),$$
we can split the generic $F_{\kappa_1}$ and assume it is of the form $F_{\kappa_1}\times H_{\kappa_1}$, which is $M_1[G_{\kappa_1}]-$generic for $\Add(\kappa_1,\kappa_1^{++})\times \Add(\kappa_1,\kappa_1^{++})$. Work inside $V[G_{\kappa}*F_\kappa]$, modify the values of $F_{\kappa_1}$ and $H_{\kappa_1}$, as in the previous section so that for every $\alpha<\kappa^{++}$, $$f_{\kappa_1,j_1(\alpha)}\restriction \kappa= h_{\kappa_1,j_1(\alpha)\cdot2+1}\restriction \kappa=f_{\kappa,\alpha}$$ and for every $\alpha<\kappa^{++}$, $f_{\kappa_1,j_1(\alpha)}(\kappa)=\alpha$.

Lift $j_1$ to the embedding $j_1\subseteq j_1^*:V[G_{\kappa+1}]\rightarrow M_E[G_{\kappa_1}*F_{\kappa_1}]$. Note that $H_{\kappa_1}$ will be used only later.
Set $$U^*=\{X\subseteq\kappa\mid \kappa\in j^*_1(X)\},$$ then $U\subseteq U^*$ and $j^*_1$ is actually the ultrapower embedding by  $U^*$.
Continuing as before, consider the second ultrapower (of $V$) by $E$. Denote $M_E$ by $M_1$ and $\Ult(M_E, j_E(E))$ by $M_2$, $j_{2,1}=j_{j_1(E)}:M_1\rightarrow M_2$ the ultrapower embedding.
Also, let $ E_1=j_1(E)$ and $\kappa_2=j_{2,1}(\kappa_1)$.
 Let $j_2:V\to M_2$ be the composition of $j_1$ with $j_{2,1}$. The extension of $j_{2,1}$ will be such that at $\kappa_1$ we force with  $\Add(\kappa_1,\kappa_1^{++})\times \Add(\kappa_1,\kappa_1^{++})$ part of the Lottery sum. 
 To realize this, we define in $M_1[G_{\kappa_1}*(F_{\kappa_1}\times H_{\kappa_1})]$ we take the generic $G_{\kappa_1}$ up to $\kappa_1$. At $\kappa_1$ we take $F_{\kappa_1}\times H_{\kappa_1}$, then in $M_1[G_{\kappa_1}*(F_{\kappa_1}\times H_{\kappa_1})]$ we construct as in Wooding and Ben-shalom argument in $V[G_{\kappa}*F_\kappa]$ an $M_2[G_{\kappa_1}*(F_{\kappa_1}\times H_{\kappa_1})]$-generic $G_{(\kappa_1,\kappa_2)}*F_{\kappa_2}$  such that $j_{2,1}''G_{\kappa_1}*F_{\kappa_1}\subseteq G_{\kappa_2}*F_{\kappa_2}$. Denote by $\l f_{\kappa_2,\alpha}\mid\alpha<(\kappa_2^{++})^{M_2}\r$ the Cohen function induced by $F_{\kappa_2}$. We also secure that for every $\alpha<(\kappa_1^{++})^{M_1}$: 
\begin{enumerate}
  \item $f_{\kappa_2 k(\alpha)}(\kappa_1)=\alpha\cdot 2+1$, if $\alpha \in j_E{}''\kappa^{++}$,
  \item $f_{\kappa_2 k(\alpha)}(\kappa_1)=\alpha\cdot 2$, if $\alpha \in (\kappa_1^{++})^{M_1}\setminus j_E{}''\kappa^{++}$.
  \item $f_{\kappa_2,\kappa_1}(\kappa_1)=\kappa$.
\end{enumerate}
Formally, given $p \in  (\Add(\kappa_2, (\kappa_2)^{++}))^{M_2[G_{\kappa_2}]}$, define $p^*$ such that $\dom(p^*)=\dom(p)$ and
$$p^*(\l\gamma,\beta\r)=\begin{cases} f_{\kappa_1,\alpha}(\gamma) & \gamma<\kappa_1\wedge \beta=k(\alpha)\\
\alpha\cdot 2+1 & \gamma=\kappa_1\wedge \beta=k(\alpha)\wedge \alpha\in j''_E\kappa^{++}\\ \alpha\cdot 2  & \gamma=\kappa_1\wedge \beta=k(\alpha)\wedge \alpha\in (\kappa_1^{++})^{M_1}\setminus j_E''\kappa^{++}\\
\kappa & \alpha=\gamma=\kappa_1\\ p(\l\gamma,\alpha\r) & otherwise\end{cases}$$
In $V[G_{\kappa+1}]$, $|\dom(p)\cap j''_{E^2}\kappa^{++}|\leq \kappa$ and $M_1[G_{\kappa_1+1}]$ is closed under $\kappa$-sequences hence $p^*\in M_1[G_{\kappa_1+1}]$. The argument we have seen before applied in $M_1[G_{\kappa_1+1}]$, thus $$M_1[G_{\kappa_1+1}]\models |\dom(p) \cap (\kappa_1+1)\times j''_{1 2}{}(\kappa_1^{++})^{M_1[G_{\kappa_1+1}]}|\leq \kappa_1.$$
 This implies that $p^*\in M_2[G_{\kappa_2+1}]$ since $M_2[G_{\kappa_2+1}]$ is closed under $\kappa_1$-sequences from $M_1[G_{\kappa_1+1}]$.
 
 Extend in $V[G_{\kappa}*F_\kappa]$, $j_{2,1}\subseteq j^*_2:M_1[G_{\kappa_1}*F_{\kappa_1}\rightarrow M_2[G_{\kappa_2}*F_{\kappa_2}]$
 and let $j^*_2:V[G_{\kappa}*F_\kappa]\rightarrow M_2[G_{\kappa_2}*F_{\kappa_2}]$ be the composition $j^*_{2,1}\circ j_1^*$. Note that $j_{2,1}^*$ is definable only in $V[G_{\kappa}*(F_{\kappa}]$. Denote by $V[G_{\kappa}*F_\kappa]=V^*$, define $$W=\{X\subseteq\kappa\mid \kappa_1\in j^*_2(X)\}\in V^*\text{ and  }A_\alpha=\{\beta<\kappa\mid f_\alpha(\beta)\text{ is odd}\}.$$

\begin{claim}
$W$ is a $\kappa$-complete ultrafilter over $\kappa$ such that

\begin{enumerate}
    \item $j_W=j^*_2$, $[id]_W=\kappa_1$, $U^*\leq_{R-K}W$.
    \item $Cub_\kappa\subseteq W$, $\{\alpha<\kappa\mid \cf(\alpha)=\alpha)\}\in W$.
    \item $L_0:=\{\beta<\kappa\mid \Add(\beta,\beta^{++})\times \Add(\beta,\beta^{++})\text{ was forced in }G_{\kappa+1}\}\in W.$
    \item For every $\alpha<\kappa^{++}$, $L_{1,\alpha}:=\{\nu<\kappa\mid f_{\kappa,\alpha}(\nu)<\nu^{++}\}\in W$
    \item The sequence $\l A_\alpha\mid \alpha<\kappa^{++}\r$ is a strong witness for $\neg\text{Gal}(W,\kappa,\kappa^{++})$. Moreover, the sequence $\l A_\alpha\cap L_{1,\alpha}\mid \alpha<\kappa^{++}\r$ is a witness for $\neg\text{Gal}(W,\kappa,\kappa^{++})$.
\end{enumerate}

\end{claim}
\begin{proof}
 $(1),(2)$ and the first part of $(5)$ is the same argument as in Claim \ref{final claim 2}.
As for $(3)$, note that we have constructed the generic $G_{\kappa_2+1}=j^*_2(G_{\kappa+1})$ so that on $\kappa_1$ we have forced $\Add(\kappa_1,\kappa_1^{++})\times \Add(\kappa_1,\kappa_1^{++})$. To see $(4)$, for every $\alpha<\kappa^{++}$, $$j_2^*(f_{\kappa,\alpha})(\kappa_1)=f_{\kappa_2,j_{2,1}(j_1(\alpha))}(\kappa_1)=j_1(\alpha)\cdot 2+1<\kappa_1^{++}.$$ 
Hence by elementarity, $\kappa_1\in j^*_2(L_{1,\alpha})$. Finally, the moreover part of $(5)$, toward a contradiction if there would be a set $I\in [\kappa^{++}]^{\kappa}$ such that $\cap_{i\in I}A_\alpha\cap L_{1,\alpha}\in W$ then clearly $\cap_{i\in I}A_\alpha\in W$, contradicting the first part of $(5)$ that $A_\alpha$'s form a witness for $\neg\text{Gal}(W,\kappa,\kappa^{++})$. 
\end{proof}
Denoted by $\nu\mapsto \pi_{nor}(\nu)$ the Rudin-Keisler projection from $W$ to $U^*$, and
let us prove that $W$ witnesses the theorem:
\begin{proposition}\label{Proof for W is Cohen 2}
Let  $H\subseteq \Pri(W)$ be a $V^*$-generic filter. There  is $G^*\in V^*[H]$ which is $V^*$-generic for $Cohen(\kappa,\kappa^{++})^{V^*}$.  
\end{proposition}
\begin{proof}[ Proof of proposition \ref{Proof for W is Cohen 2}:]
Let $\l c_n \mid n<\omega \r $ be the $W$-Prikry sequence corresponding to $H$. Suppose without loss of generality that for every $n<\omega$, $c_n\in L_0$.

Define, for every $n<\omega$, the set
$$Z_n=\{\alpha<\kappa^{++}\mid \{c_m \mid n\leq m<\omega\} \subseteq A_\alpha\cap L_{1,\alpha} \text{ and } n \text{ is least possible } \}.$$
For every $\alpha<\kappa^{++}$, let $n_\alpha$ be the unique $n$ such that $\alpha\in Z_n$.
Let $\alpha<\kappa^+$, define $f^*_\alpha:\kappa\to \kappa$ as follows:

 Denote by $$\l f_{c_n,\alpha}\mid\alpha<c_n^{++}\r, \ \l h_{c_n,\alpha}\mid\alpha<c_n^{++}\r$$
the generic $c_n-$Cohen functions forced by $G$ and define the function
$f^*_\alpha:\kappa\rightarrow\kappa$ by
$$f^*_\alpha=h_{c_{n_\alpha},f_{\kappa,\alpha}(c_{n_\alpha})}\cup(\bigcup_{n_\alpha<n<\omega}h_{c_n,f_{\kappa,\alpha}(c_n)}\restriction[c_{n-1},c_n))$$

Note that the Cohen functions on $\kappa$ play the role of the canonical functions from the previous section.
Let us prove that $F=\l f^*_\alpha\mid \alpha<\kappa^{++}\r$ are Cohen generic functions over $V^*$.

\begin{claim}
Let $G^*=\{p\in \Add(\kappa,\kappa^{++})^{V^*}\mid p\subseteq F\}$, then $G^*$ is a $V^*$-generic filter.
\end{claim}
Let $\mathcal{A}\in V^*$ be a maximal antichain in the forcing $\Add(\kappa, \kappa^{++})^{V^*}$. Note that since $\Add(\kappa,\kappa^{++})^{V^*}$ is $\kappa-$closed then $$Cohen(\kappa,\kappa^{++})^{V[G_\kappa]}=Cohen(\kappa,\kappa^{++})^{V^*}.$$ 
By $\kappa^+-$cc of the forcing, there is $Y'\subseteq\kappa^{++}$, $Y'\in V$ such that $|Y'|=\kappa$ and $\mathcal{A}\subseteq \Add(\kappa,Y')^{V^*}$.  Also, since $|\mathcal{A}|=\kappa$, $\mathcal{A}\in V[G_\kappa*F_\kappa]$, there is $Z\subseteq\kappa^{++}$ such that $|Z|=\kappa$ such that $\mathcal{A}\in V[G_\kappa*F_\kappa\restriction Z]$. Without loss of generality assume that $Z=Y\in V$. Let $V\ni\phi:\kappa\rightarrow Y$ be a bijection.

As in claim \ref{elementary chain construction}, we can construct an $\in-$increasing continuous chain $\l N_\beta\mid \beta<\kappa\r\in V^*$ of elementary submodels of $H_\chi$ such that

\begin{enumerate}
  \item $|N_\beta|<\kappa$,
  \item $G_{\kappa+1},\mathcal{A},\phi,Y\in N_0$,
  \item $N_\beta\cap \kappa=\gamma_\beta$ is a cardinal $<\kappa$, $\gamma_{\beta+1}$ is regular.
\item If $\gamma_\beta$ is regular, then $\Add(\gamma_\beta,\phi''\gamma_\beta)=\Add(\kappa,Y)\cap N_\beta$.
\end{enumerate}

Set
$$C=\{ \beta<\kappa \mid \gamma_\beta=\beta \}.$$
This is club in $\kappa$ since the sequence $\gamma_\beta$ is continuous and since the set $\{\beta\mid \gamma_\beta=\beta\}$ is a club.  

Recall that by construction $j^*_2(\l f_{\kappa,\alpha}\mid \alpha<\kappa^{++}\r)=\l f_{\kappa_2,\alpha}\mid\alpha<\kappa_2^{++}\r$. Also, for every $\nu\in j_2(\phi)''\kappa_1$ there is $\gamma<\kappa_1$ such that $\nu=j_2(\phi)(\gamma)$, and since $crit(j_{2,1})=\kappa_1$, $\nu=j_{2,1}(j_1(\phi)(\gamma))$. Since $j_1(\phi):\kappa_1\rightarrow \kappa_1^{++}$ we conclude that $\nu=j_{2,1}(\alpha)$ for some $\alpha<(\kappa_1^{++})^{M_1}$ which implies that $$f_{\kappa_2,\nu}(\kappa_1)\in\{\alpha\cdot 2,\alpha\cdot 2+1\}.$$ Since $\phi$ is a bijection, for every distinct $\nu_1,\nu_2\in j_2(\phi)''\kappa_1$,  $f_{\kappa_2,\nu_1}(\kappa_1)\neq f_{\kappa_2,\nu_2}(\kappa_1)$. Reflecting this, we obtain that the set $$E:=\{\nu<\kappa\mid \forall\nu_1,\nu_2\in \phi''\nu.\nu_1\neq\nu_2\rightarrow f_{\kappa,\nu_1}(\nu)\neq f_{\kappa,\nu_2}(\nu)\}\in W.$$ 

Also, by construction, for every $\alpha<\kappa_1^{++}$, $f_{\kappa_2,j_{2,1}(\alpha)}\restriction\kappa_1=f_{\kappa_1,\alpha}$ and therefore for every for every $\alpha\in j_2(\phi)''\kappa_1$, there is $\nu<\kappa_1^{++}$ such that $$\alpha=j_{2,1}(j_1(\phi))(\nu)=j_{2,1}(j_1(\phi)(\nu))$$ and $j_1(\phi)(\nu)<\kappa_1^{++}$. Hence $f_{\kappa_2,\alpha}\restriction\kappa_1=f_{\kappa_1,\beta}$ for some $\beta<\kappa_1^{++}$.
Reflecting this we obtain that the set $$F:=\{\beta<\kappa\mid \forall \gamma\in \phi''\beta.\exists\delta<\beta^{++}.f_{\kappa,\gamma}\restriction\beta=f_{\beta,\delta}\} \in W.$$ 

Now the argument of  Claim \ref{condition in model} applies since for every $\nu_0\in C\cap E\cap F$, $\forall\tau_1<\tau_2\in\phi''\nu_0$, $f_{\kappa,\tau_1}(\nu_0)\neq f_{\kappa,\tau_2}(\nu_0)$, hence   $\l h_{\nu_0,f_{\kappa,\tau}(\nu_0)}\mid \tau\in \phi''\nu_0\r$ are  distinct mutually  $V[G_{\nu_0}*F_{\nu_0}]$-generic Cohen functions over $\nu_0$. Thus , we can find $d\in\mathcal{A}\cap \Add(\nu_0,\nu_0^{++})$ such that $d$ is extended by $\l h_{\nu_0,f_{\kappa,\alpha}(\nu_0)}\mid \alpha\in \phi''\nu_0\r$.
Finally we note that
$$R:=\{\nu<\kappa\mid \forall \alpha\in\phi''\pi_{nor}(\nu). f_{\kappa,\alpha}(\nu)\text{ is odd}\}\in W$$

Let $p=\l \l\r, B\r$ be a condition, shrink $B$ to $B_0:=B\cap C\cap E\cap F\cap R\in W$ and pick now any $\nu_0\in B_0$. Split $\phi''\nu_0$ into two sets: $$X^{\nu_0}_0:=\{\tau\in\phi''\nu_0\mid \nu_0\in A_\tau\}\text{ and }X^{\nu_0}_1=\phi''\nu_0\setminus X^{\nu_0}_0.$$
Since $\nu_0\in R$ we have that $X_1\subseteq \phi''(\pi_{nor}(\nu_0),\nu_0)$.
The condition  $p_0=\l \l \nu_0\r, B_0\cap (\bigcap_{\tau\in\phi''\nu_0}A_\tau)\r$, forces the following:
\begin{enumerate}
  \item the Prikry sequence is included in each $A_\tau$, $\tau\in X^{\nu_0}_0$, i.e., $n_\tau=0$,

  \item $n_\tau=1$, for every $\tau\in X^{\nu_0}_1$.
\end{enumerate}
In particular, this condition forces some information about the Cohen functions. Namely that:
\begin{enumerate}
    \item for $\tau\in X_0^{\nu_0}$,  $f^*_{\tau}\restriction \nu_0=h_{\nu_0,f_{\kappa,\tau}(\nu_0)}$ 
    \item for $\tau\in X_1^{\nu_0}$, $f^*_{\tau}\restriction \nu_0=h_{\lusim{c}_1,f_{\kappa,\tau}(\lusim{c}_1)}\restriction \nu_0$.
\end{enumerate}

We would like to find a condition in $\mathcal{A}$ which is below these decided parts of the Cohen. By the previous paragraph, there is $d\in N_{\nu_0}\cap \Add(\kappa,Y)=\Add(\nu_0,\phi''\nu_0)$, which is extended by $\l h_{\nu_0,f_{\kappa,\tau}(\nu_0)}\mid \tau\in \phi''\nu_0\r$. As before we will need to pick $\nu_0,\nu_1$ so that $d^{\nu_0}\in G^*$.

Let $\lusim{B_0}$ be a name in $V$ for $B_0$.
We fix a condition $m_0\in G_\kappa*F_\kappa$ which forces that if $\nu_0\in\lusim{B_0}$ then  there is $d\in \Add(\nu_0,\phi''\nu_0)\cap\lusim{\mathcal{A}}$ which is extended by $\l \lusim{h}_{\nu_0,f_{\kappa,\tau}(\nu_0)}\mid \nu_0\in\phi''\nu_0\r$, and $\forall \alpha\in \phi''\lusim{\pi_{nor}(\nu_0)}.\ \nu_0\in \lusim{A}_\alpha$. Recall that by the construction of $G_{\kappa_2}$, we have $m_0\in G_{\kappa_2}*F_{\kappa_2}$. 
Let $ m_0\leq t\in G_{\kappa_2}*F_{\kappa_2}$ be a condition such that
$$(1) \ \ \ t\Vdash\kappa_1\in j_2(\lusim{B_0}).$$
By the construction of $G_{\kappa_2}*F_{\kappa_2}$, $t$ has the form:
$$t=\l t_{<\kappa}, t_{\kappa},t_{(\kappa,\kappa_1)},\underset{t_{\kappa_1}}{\underbrace{\l t_{\kappa_1}^0,t_{\kappa_1}^1\r}},t_{(\kappa_1,\kappa_2)},t_{\kappa_2}\r.$$ 
Distinguishing from the case of $\kappa^+$, we now have that  $f_{\kappa_2,j_2(\alpha)}(\kappa_1)=j_1(\alpha)\cdot 2+1$ for every $\alpha<\kappa^+$, this will hold for every $\alpha\in\phi''\kappa$ as well. Also, recall that $Y\in V$, hence $\phi\in V$. Thus $j_2(\phi)\in M_2$ and  $j_2(\phi)''\kappa\in M_2$. Also, for $(t_{\kappa_2})_{G_{\kappa_2}}\in M_2[G_{\kappa_2}]$, $$j_2''\kappa^{++}\cap \supp((t_{\kappa_2})_{G_{\kappa_2}})\in M_2[G_{\kappa_2}]$$ and $(t_{\kappa_2})_{G_{\kappa_2}}\restriction \kappa\times \{j_2(\alpha)\}\subseteq f_{\kappa,\alpha}$. We also fix $X\in V$, $X\subseteq\kappa^{++}$, $|N_0|\leq\kappa$ such that $\supp((t_{\kappa_2})_{G_{\kappa_2}})\subseteq j_2(N_0)$.

Therefore, we can extend if necessary $t$ such that
$$ (2a)\ \ t_{<\kappa_2}\Vdash (\kappa\cup\{\kappa_1\})\times j_2(\phi)''\kappa \subseteq \dom(t_{\kappa_2})\wedge (0,\kappa_1)\in\dom(t_{\kappa_2})\wedge \supp(t_{\kappa_2})\subseteq j_2(N_0)$$
$$(2b) \ \ t_{<\kappa_2}\Vdash \  t_{\kappa_2}(\kappa_1,j_2(\alpha))=j_1(\alpha)\cdot 2+1,\text{ for every }j_2(\alpha)\in j_2(\phi)''\kappa\text{ and } t_{\kappa_2,\kappa_1}(0)=\kappa.$$
$$(2c) \ \  t_{<\kappa_2}\Vdash t_{\kappa_2,j_2(\alpha)}\restriction \kappa=\lusim{f}_{\kappa,\alpha}\text{  for every } j_2(\alpha)\in j_2''\kappa^+\cap\supp(t_{\kappa_2}).$$

Next consider $t_{\kappa_1}=\l t_{\kappa_1}^0,t_{\kappa_1}^1\r$, it is a $\calP_{\kappa_1}-$name for a condition in $F_{\kappa_1}\times H_{\kappa_1}$. By the construction of the generic $F_{\kappa_1}\times H_{\kappa_1}$, for every $\alpha<\kappa^{++}$, we made sure that, $h_{\kappa_1,j_1(\alpha)2+1}\restriction\kappa=f_{\kappa,\alpha}$. Also, $(j_1(\phi)''\kappa)\cdot 2+1\in M_2$\footnote{ For a set of ordinals $A$, let $A\cdot 2+1=\{\alpha\cdot 2+1\mid \alpha\in A\}$.}. Let $$\mu_1=\{\l j_1(\alpha)\cdot2+1,\alpha\r\mid \alpha\in \phi''\kappa\}\in M_1$$ The fact that for every $\beta<\kappa^{++}$,  $f_{\kappa_2,j_2(\beta)}(\kappa_1)=j_1(\beta)\cdot2+1$ implies $$\dom(\mu_1)=(j_1(\phi)''\kappa)\cdot 2+1=\{f_{\kappa_2,\gamma}(\kappa_1)\mid \gamma\in j_2(\phi)''\kappa\}, \ \rng(\mu_1)=\phi''\kappa\subseteq\kappa^{++}$$
Extend if necessary $t_{<\kappa_1}$, and assume that $$(3) \ \ t_{<\kappa_1}\Vdash\kappa\times (j_1(\phi)''\kappa)\cdot 2+1\subseteq\dom(t^1_{\kappa_1})\wedge \forall j_1(\alpha)\in j_1(\phi)''\kappa, \  t_{\kappa_1,j_1(\alpha)\cdot 2+1}^1\restriction\kappa= \lusim{f}_{\kappa,\alpha}.$$
 As for the lower part, due to the Easton support, we have $$(4) \ \ t_{<\kappa}\in V_\kappa.$$
Fix functions $r,\Gamma_1$ which represents $t,\mu$ resp. in the ultrapower $M_{E^2}$, namely for some 
$\vec{\xi}\in [\kappa_1^{++}]^{<\omega}$, $j_2(r)(\vec{\xi})=t, \ j_2(\Gamma_1)(\vec{\xi})=\mu$. 
Without loss of generality, suppose that   both $\kappa$ and $\kappa_1$ appear in $\vec{\xi}$, $\kappa=\min(\vec{\xi})=\vec{\xi}(0)$ and $\kappa_1=\vec{\xi}(i_0)$. Then the functions $\vec{\nu}\in[\kappa]^{|\vec{\xi}|}\mapsto (\vec{\nu}(0),\vec{\nu}(i_0))$ represent $(\kappa,\kappa_1)$.
Without loss of generality, suppose that for every $\vec{\nu}$, it takes the form $$r(\vec{\nu})=\l r_{<\vec{\nu}(0)}, r_{\vec{\nu}(0)},r_{(\vec{\nu}(0),\vec{\nu}(i_0))},\l r_{\vec{\nu}(i_0)}^0,r_{\vec{\nu}(i_0)}^1\r,r_{(\vec{\nu}(i_0),\kappa)},r_\kappa\r.$$

Reflecting some of the properties of $t$ we obtain a set $B'\in E(\vec{\xi})$ such that for every $\vec{\nu}\in B'$:
\begin{enumerate}
    \item [$(1)_{\vec{\nu}}$] $r(\vec{\nu})\Vdash \vec{\nu}(i_0)\in \lusim{B_0}$.
    \item [$(2a)_{\vec{\nu}}$] $ r_{<\kappa}\Vdash (\vec{\nu}(0)\cup\{\vec{\nu}(i_0)\})\times \phi''\vec{\nu}(0) \subseteq \dom(r_{\kappa})\wedge$
    
    $\l0,\vec{\nu}(i_0)\r\in\dom(r_{\kappa}) \wedge\supp(r_{\kappa})\subseteq N_0$

    \item [$(2b)_{\vec{\nu}}$] $r_{<\kappa}\Vdash  \forall\alpha\in\phi''\vec{\nu}(0).r_{\kappa,\alpha}(\vec{\nu}(i_0))$ is odd and $r_{\kappa,\vec{\nu}(i_0)}(0)=\vec{\nu}(0)$. 
   
    \item  [$(3)_{\vec{\nu}}$] $ r_{<\vec{\nu}(i_0)}\Vdash \vec{\nu}(0)\times\dom(\Gamma_1(\vec{\nu}))\subseteq\dom(r^1_{\vec{\nu}(i_0)})$ and for every
    
    $\beta\in\dom(\Gamma_1(\vec{\nu})),  r^1_{\vec{\nu}(i_0),\beta}\restriction\vec{\nu}(0)=\lusim{f}_{\vec{\nu}(0),\Gamma_1(\vec{\nu})(\beta)}$.
    
    \item [$(4)_{\vec{\nu}}$] $r_{<\vec{\nu}(0)}=t_{<\kappa}\in V_{\vec{\nu}(0)}$.
    
\end{enumerate}
\vskip 0.2 cm
Let $$B''=\{\nu(i_0) \mid\exists\vec{\nu}\in B'. r(\vec{\nu})\in G_\kappa*F_\kappa\}.$$
Since $B'\in E(\vec{\xi})$ we have that $\vec{\xi}\in j_2(B')$ and since $j_2(r)(\vec{\xi})=t\in j^*_2(G_\kappa*F_\kappa)=G_{\kappa_2}*F_{\kappa_2}$, we conclude that $B''\in W$. Also, $B''\subseteq B_0$ by clause $(1)$. 

We proceed by a density argument, recall that by the definition of $G_2$, we have that $\l t_{<\kappa},t_\kappa\r\in G_\kappa*F_\kappa$. 
\begin{claim}
Let $D$ be the set of all conditions $q\in\mathcal{P}_{\kappa+1}$, such that exists $\vec{\nu}_0,\vec{\nu}_1\in B', \ \vec{\nu}_1(0)>\vec{\nu}_0(i_0)$ and a $\mathcal{P}_{\vec{\nu}_0(i_0)}-$name $\lusim{d}^{\vec{\nu}_0(i_0)}$ such that \begin{enumerate}
    \item [(a)] $r(\vec{\nu}_0),r(\vec{\nu}_1)\leq q$.
    \item [(b)] $q\Vdash \lusim{d}^{\vec{\nu}_0(i_0)}\in \lusim{A}\cap\Add(\vec{\nu}_0(i_0),\phi''\vec{\nu}_0(i_0))$.
    \item [(c)] $q\Vdash \forall \tau\in X_1^{\vec{\nu}_0(i_0)}.\lusim{h}_{\nu_1,f_{\kappa,\tau}(\vec{\nu}_1(i_0))}\restriction \vec{\nu}_0(i_0)=\lusim{d}^{\vec{\nu}_0(i_0)}_\tau.$
    
\end{enumerate}
Then $D$ is dense (open) above $\l t_{<\kappa},t_\kappa\r$ and thus $D\cap G_\kappa*F_\kappa\neq\emptyset$
\end{claim}
\begin{proof}

Work in $V$, let $\l t_{<\kappa},t_\kappa\r\leq p:=\l p_{<\kappa},p_\kappa\r \in \calP_{\kappa+1}$.  We will define two extensions $p\leq q\leq q^*$ as before such that $q^*\in D$. By definition of $\mathcal{P}_{\kappa+1}$, $p_{<\kappa}\Vdash p_\kappa\in \Add(\kappa,\kappa^{++})$, by $\kappa-$cc of $\mathcal{P}_\kappa$, for some $Z\subseteq\kappa^{++},\ Z\in V$, $|Z|<\kappa$ and some $\gamma<\kappa$, $p_{<\kappa}\Vdash \dom(p_\kappa)\subseteq \gamma\times Z$ The same argument as before indicate that
$$p_{<\kappa}\Vdash Z\supseteq \supp(t_\kappa)\wedge \forall \beta\in Z.j_2(p_{\kappa})_{j_2(\beta)}\geq t_{\kappa,\beta}$$  $$t_{<\kappa_2}\Vdash \forall j_2(\tau)\in \supp(t_{\kappa_2})\cap j_2(Z). t_{\kappa_2,j_2(\tau)}\restriction \gamma= \lusim{f}_{\kappa,\tau}\restriction \gamma.$$
Denote $\mu=(j_2\restriction (Z\cup N_0))^{-1}\in M_2$, then
$$\dom(\mu)=j_2(Z)\cup j_2''N_0, \ \rng(\mu)=Z\cup\theta, \ \mu\text{ is 1-1}.$$
and we can reformulate:
$$p_{<\kappa}\Vdash \mu''j_2(Z)\supseteq \supp(t_\kappa)\wedge \forall \beta\in j_2(Z).j_2(p_{\kappa})_{\beta}\geq t_{\kappa,\mu(\beta)}$$  $$t_{<\kappa_2}\Vdash \forall \tau\in \supp(t_{\kappa_2})\cap j_2(Z). t_{\kappa_2,\tau}\restriction \gamma= \lusim{f}_{\kappa,\mu(\tau)}\restriction \gamma.$$
Also, find $\delta<\kappa$ such that $t_{<\kappa}\Vdash \phi''(\delta,\kappa)\cap  Z=\emptyset.$ 
We have that  $$\phi''(\delta,\kappa)=\mu_1''\{f_{\kappa_2,\gamma}(\kappa_1)\mid \gamma\in j_2(\phi)''(\delta,\kappa)\},\text{ and }\mu''\supp(j_2(p_\kappa))=Z.$$ Therefore in $M_2$ we will have that
 $$p_{<\kappa}\Vdash [\mu_1''\{\lusim{f}_{\kappa_2,\gamma}(\kappa_1)\mid \gamma\in j_2(\phi)''(\delta,\kappa)\}]\cap[ \mu''\supp(j_2(p_\kappa))]=\emptyset.$$ 
Let $\Gamma$ be such that $j_2(\Gamma)(\vec{\xi})=\mu$, there is a set $\overline{B}_0\subseteq B'$, $\overline{B}_0\in E(\vec{\xi})$ such that for every $\vec{\nu}\in\overline{B}_0$,
$$(i) \ \ p_{<\kappa}\Vdash \Gamma(\vec{\nu})''Z\supseteq \supp(r_{\vec{\nu}(0)})\wedge  \forall\beta\in Z. p_{\kappa,\beta}\geq r_{\vec{\nu}(0),\Gamma(\vec{\nu})(\beta)},$$ 
$$(ii) \ \ r_{<\kappa}\Vdash \forall \tau\in Z\cap \supp(r_{\kappa}). r_{\kappa,\tau}\restriction \gamma=\lusim{f}_{\vec{\nu}(0),\Gamma(\vec{\nu})(\tau)}\restriction \gamma.$$
$$(iii) \ \  p_{<\kappa}\Vdash\Gamma_1(\vec{\nu})''\{\lusim{f}_{\kappa,\gamma}(\vec{\nu}(i_0))\mid \gamma\in \phi''(\delta,\vec{\nu}(0))\}\cap [\Gamma(\vec{\nu})''\supp(p_\kappa)]=\emptyset.$$
 Find $\vec{\nu}_0,\vec{\nu}_1\in \overline{B}_0$ such that $r(\vec{\nu}_0),r(\vec{\nu}_1)$ are compatible, $\vec{\nu}_0(0)>\delta,\gamma,\sup(\supp(p_{<\kappa}))$, and $\vec{\nu}_1(0)>\vec{\nu}_0(i_0),\sup(\supp(r_{<\kappa}(\vec{\nu}))$. Denote  $$r^0:=r(\vec{\nu}_0)=\l r^0_{<\vec{\nu}_0(0)},r^0_{\vec{\nu}_0(0)},r^0_{(\vec{\nu}_0(0),\kappa)},r^0_\kappa\r,$$
$$r^1:=r(\vec{\nu}_1)=(r^1_{<\vec{\nu}_1(0)},r_{\vec{\nu}_1(0)},r_{(\vec{\nu}_1(0),\vec{\nu}_1(i_0))},\l r^{0,1}_{\vec{\nu}_1(i_0)},r^{1,1}_{\vec{\nu}_1(i_0)}\r,r^1_{(\vec{\nu}_1(i_0),\kappa)},r^1_\kappa\r$$
As before, $q$ has the form:
$q=p_{<\kappa}{}^{\smallfrown}q_{\vec{\nu}_0(0)}{}^{\smallfrown}r^0_{(\vec{\nu}_0(0),\kappa)}{}^{\smallfrown}q_\kappa$.
We have $q_{\vec{\nu}_0(0)}$ is a $\mathcal{P}_{\vec{\nu}_0(0)}-$name for a condition with $\supp(q_{\vec{\nu}_0(0)})= \Gamma(\vec{\nu}_0)''Z$ and
$q_{\nu'_0,\Gamma(\nu'_0,\nu_0)(\beta)}= p_{\kappa,\beta}$. As for $q_\kappa$, we set it to be a $\mathcal{P}_\kappa-$name for $r^0_\kappa\cup p_\kappa$.

The argument that $r^0\leq q$ is the same as in the case of $\kappa^+$.

The choice of $\lusim{d}^{\vec{\nu}_0(i_0)}$ is possible since $r^0\leq q$ and $m_0\leq \l t_{<\kappa},t_\kappa\r\leq q\Vdash \vec{\nu}_0(i_0)\in \lusim{B}_0$.

Define the final condition $q\leq q^*$,  $$q^*=q_{<\kappa}{}^{\smallfrown}q^*_{\vec{\nu}_1(0)}{}^{\smallfrown}r^1_{(\vec{\nu}_1(0),\kappa)}{}^{\smallfrown}q^*_\kappa.$$ Again we have that $r^0\Vdash X_1^{\vec{\nu}_0(i_0)}\subseteq \phi''(\vec{\nu}_0(0),\vec{\nu}_0(i_0))\subseteq\phi''(\vec{\nu}_0(0),\vec{\nu}_1(0))$ and by $(iii)$   $$q_{<\kappa}\Vdash [\Gamma_1(\vec{\nu}_1)''\{\lusim{f}_{\kappa,\gamma}(\nu_1)\mid \gamma\in X^{\nu_0}_1\}]\cap [\Gamma(\vec{\nu}_1)''Z]=\emptyset.$$ Now for the code  of $\lusim{d}^{\vec{\nu}_0(i_0)}$, let $$\supp(q^*_{\vec{\nu}_1(0)})=[\Gamma_1(\vec{\nu}_1)''\{\lusim{f}_{\kappa,\gamma}(\vec{\nu}_1(i_0))\mid \gamma\in X^{\vec{\nu}_0(i_0)}_1\}]\uplus [\Gamma(\vec{\nu}_1)''Z]$$ and
$$q^*_{\vec{\nu}_1(0),\alpha}=\begin{cases} q_{\kappa,\beta} &\exists \beta\in \Gamma(\vec{\nu}_1)''Z. \alpha=\Gamma(\vec{\nu}_1)(\beta)\\
\lusim{d}^{\vec{\nu}_0(i_0)}_\tau & \exists \tau\in X^{\vec{\nu}_0(0)}_1.\alpha=\Gamma_1(\vec{\nu}_1)(\lusim{f}_{\kappa,\tau}(\vec{\nu}_1(i_0)))\end{cases}$$ 
and $q^*_\kappa=q_\kappa\cup r^1_\kappa$. We conclude that $r^0\leq q\leq q^*$, $r^1\leq q^*$, namely $(a)$. Finally, for every $\tau\in X^{\vec{\nu}_0(i_0)}_1$, $\lusim{f}_{\kappa,\tau}(\vec{\nu}_1(i_0))\in \dom(\Gamma_1(\vec{\nu}))$ and by $(3)_{(\vec{\nu}_1)}$ we have that $q^*$ forces that
$$\lusim{h}_{\vec{\nu}_1(i_0),\lusim{f}_{\kappa,\tau}(\vec{\nu}_1(i_0))}\restriction \vec{\nu}_0(i_0)=\lusim{f}_{\vec{\nu}_1(0),\Gamma_1(\vec{\nu}_1)(\lusim{f}_\tau(\vec{\nu}_1(i_0)))}\restriction \vec{\nu}_0(i_0)\geq$$
$$\geq q^*_{\vec{\nu}_1(0),\Gamma_1(\vec{\nu}_1)(\lusim{f}_{\kappa,\tau}(\vec{\nu}_1(i_0)))}=\lusim{d}^{\vec{\nu}_0(i_0)}_\tau$$


Then $p\leq q^*$ and $q^*\in D$ 
\end{proof}
The rest of the argument remains unchanged.
\end{proof}

\end{proof}

\section{On the Extender-based Prikry forcings and adding subsets to $\kappa$ }

H. Woodin asked in the early 90s whether, assuming that there is no inner model with a strong cardinal, it is possible to have a model $M$ in which $2^{\aleph_\omega}\geq \aleph_{\omega+3}$, GCH holds  below $\aleph_\omega$, there is an inner model $N$ such that $\kappa=(\aleph_\omega)^M$ is a measurable and 
$2^\kappa\geq (\aleph_{\omega+3})^M$. His question was natural given the results known back then: Magidor proved \cite{MagAnnals} that it is consistent relative to a supercompact cardinal and a huge cardinal above it to have $2^{\aleph_\omega}\geq \aleph_{\omega+m}$ and $GCH_{<\aleph_\omega}$ using the supercompact Prikry forcing with collapses. Woodin, in an unpublished work which can be found in \cite{CummingWoodin} reduced Magidor's large cardinal assumption to get $2^{\aleph_{\omega}}=\aleph_{\omega+2}+GCH_<\aleph_{\omega}$ to a strong cardinal (actually to a $p_2\kappa-$hypermeasurable). 
Later, Gitik and Magidor \cite{Git-Mag} proved using the Extender-based Prikry forcing with collapses that starting from the optimal large cardinal assumption, it is possible to obtain $\aleph_{\omega+m}=2^{\aleph_\omega}$ and $GCH_{<\aleph_\omega}$. 
However, Woodin's question remained unanswered.

 A natural approach to answer Woodin's question is to force with the Extender-based Prikry forcing over $\kappa$ and then argue that in some intermediate where $\kappa$ is measurable we added $\lambda\geq\kappa^{++}$ many subsets to $\kappa$.
\\Our purpose will be to show that this direction is doomed. More precisely, we will prove that in any intermediate model of the Extender-based Prikry forcing where $\kappa^{++}$-many subsets of $\kappa$ were introduced, $\kappa$ is singularized (and in particular not measurable.
We will analyze the situation in both the original version of Gitik and Magidor from \cite{Git-Mag} and Merimovich version of the Extender-based Prikry forcing from \cite{Mer,Mer1,Mer2}.
We will rely on the following theorem from \cite[Theorem 6.7]{TomMoti}:
\begin{theorem}\label{ThesisRes}
 Suppose that $\mathbb{U}=\l U_a\mid a\in[\kappa]^{<\omega}\r$ is a tree of $P-$point ultrafilters. Let $G\subseteq P(\mathbb{U})$ be $V-$generic, then for every set of ordinals $A\in V[G]\setminus V$, $cf^{V[A]}(\kappa)=\omega$.
\end{theorem}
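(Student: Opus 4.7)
The plan is to extract from $A$ an $\omega$-sequence cofinal in $\kappa$; since the Prikry generic sequence $\l c_n\mid n<\omega\r$ is cofinal in $\kappa$ in $V[G]$, it suffices to recover from $A$ either this sequence or an unbounded subsequence of it, after which $\cf^{V[A]}(\kappa)=\omega$ is immediate.

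First I would fix in $V$ a name $\name{A}$ for $A$. By the Prikry property of $P(\mathbb{U})$, combined with a diagonal intersection along the tree (using $\kappa$-completeness of each $U_a$), for each ordinal $\alpha$ one obtains a single tree $T_\alpha\in V$ with empty stem whose every branch, after a finite initial segment, decides the statement ``$\alpha\in \name{A}$.'' This yields a function $\varphi_\alpha\in V$ on the nodes of $T_\alpha$ with values in $\{0,1\}$ such that $\chi_A(\alpha)=\varphi_\alpha(\l c_0,\ldots,c_{n-1}\r)$ once $\l c_0,\ldots,c_{n-1}\r$ is a long enough initial segment of the Prikry sequence lying in $T_\alpha$. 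Since $A\notin V$, the family $\l \varphi_\alpha\mid \alpha<\gamma\r$ (for $\gamma$ large enough to contain $A$) must fail to be eventually constant along the generic branch; otherwise $A$ would be computable in $V$ from the trees $T_\alpha$ alone.

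Second, I would inductively decode a cofinal $\omega$-sequence $\l d_n\mid n<\omega\r$ inside $V[A]$. Having defined $d_0<\cdots<d_{n-1}$ together with a set $B_n\in V$ which lies in $U_{\l d_0,\ldots,d_{n-1}\r}$, the $P$-point hypothesis lets me choose in $V$, from the countably many restrictions $\nu\mapsto \varphi_\alpha(\l d_0,\ldots,d_{n-1},\nu\r)$, a single measure-one pseudo-intersection $B_{n+1}\subseteq B_n$ and a query ordinal $\alpha_n$ such that the map $\nu\mapsto \varphi_{\alpha_n}(\l d_0,\ldots,d_{n-1},\nu\r)$ partitions $B_{n+1}$ into two non-empty sets, only one of which is consistent with $\chi_A(\alpha_n)$. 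Set $d_n$ to be the least ordinal in that piece that is above $d_{n-1}$; the definition uses only $\mathbb U$, the names, and the previously constructed $d_i$ plus the bit $\chi_A(\alpha_n)$, so it lives in $V[A]$. By Lemma \ref{easy direction of Mathias}, applied to any club $C\in V$, the actual Prikry points $c_m$ are eventually in $C$, and a straightforward comparison (each $d_n$ dominates $c_{n-1}$ by construction of the pseudo-intersection) gives that $\l d_n\mid n<\omega\r$ is cofinal in $\kappa$.

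The main obstacle will be ensuring that the entire decoding recipe is uniformly definable in $V[A]$ — that is, the trees $T_\alpha$, the shrinking sets $B_n$, and the query ordinals $\alpha_n$ must be canonically chosen in $V$ from parameters already available in $V[A]$. The $P$-point property is precisely what permits this: it collapses the countably many requirements arising at each level (one per potential query $\alpha$) into a single $V$-definable witness, so that no information from $G\setminus V[A]$ leaks into the construction. Once this bookkeeping is in place, the non-normal tree structure poses no further difficulty, because the decoding is local to one branch of the tree at a time.
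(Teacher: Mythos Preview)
The paper does not prove this theorem; it is quoted verbatim from \cite[Theorem~6.7]{TomMoti} and used as a black box in Section~4 (see the sentence immediately preceding the statement: ``We will rely on the following theorem from \cite[Theorem~6.7]{TomMoti}''). So there is no proof here to compare your proposal against.

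As for the proposal itself, the overall strategy --- use the strong Prikry property plus the $P$-point hypothesis to reduce to a single fat tree on which membership in $\lusim{A}$ is decided level by level, and then read off a cofinal $\omega$-sequence from the bits of $A$ --- is indeed the shape of the argument in \cite{TomMoti}. But your sketch has a real gap at the key step. You define $d_n$ to be the \emph{least} element of a certain piece of a measure-one set, and then assert that ``each $d_n$ dominates $c_{n-1}$ by construction of the pseudo-intersection.'' Nothing in your construction forces this: the least element of a measure-one set can be arbitrarily small, and the $P$-point pseudo-intersection only guarantees eventual containment, not that minima are pushed up. The actual argument does not try to build an arbitrary cofinal sequence; it recovers (a tail of) the generic Prikry sequence $\l c_n\mid n<\omega\r$ itself. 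The point is that once the tree is thinned so that every node decides the relevant bits, the value $\chi_A(\alpha)$ together with the already-recovered $c_0,\ldots,c_{n-1}$ pins down which measure-one piece $c_n$ must lie in, and since $A\notin V$ one can always find such a separating $\alpha$; iterating, the entire tail of $\l c_n\r$ is reconstructed in $V[A]$. Your version loses the connection to the actual Prikry points and with it the cofinality conclusion.
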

Note that if $U$ is any $\kappa$-complete ultrafilter, then the forcing $\Pri(U)$ which we use in this paper is forcing equivalent to $P(\mathbb{U})$ where $\mathbb{U}=\l U_a\mid a\in[\kappa]^{<\omega}\r$ is such that $U_a=U$ for every $a$.

Assume $2^\kappa=\kappa^+$.
Let $E$ be an  extender over $\kappa$.
We consider two sorts of Extender-based Prikry forcings - the original one, see \cite{Git-Mag} or \cite{Gitik2010}, and a more elegant version of Carmi Merimovich \cite{Mer, Mer1,Mer2}.

Let us start with Merimovich version, but in which the measures of $E$ are $P-$points as in \cite{Git-Mag}.

\subsection{ The Merimovich version with $P-$points }

Suppose that there is $h:\kappa\to \kappa$ such that all the generators of $E$ are below $j_E(h)(\kappa)$.
\\For example, if $E$ is a $(\kappa, \kappa^{++})-$extender, this holds with $h(\nu)=\nu^{++}$, $\nu<\kappa$. This is sufficient to ensure that for every $\alpha<\lambda$, $U_\alpha$ is a $P-$point ultrafilter.

Denote by $\mathbb{P}_E$ the Merimovich Extender-based Prikry forcing with $E$, as defined in \cite{Mer2}(or see definition \ref{Carmi Version}).

\begin{theorem}\label{Characterization of P-point extension}
Let $G\subseteq \mathbb{P}_E$ be a generic. Suppose that $A\in V[G]\setminus V$ is a subset of $\kappa$.
Then $\kappa$ changes its cofinality to $\omega$ in $V[A]$.

\end{theorem}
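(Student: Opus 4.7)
The plan is to reduce the theorem to the earlier Theorem \ref{ThesisRes} by arguing that, under the $P$-point hypothesis on the measures of $E$, the Merimovich forcing $\mathbb{P}_E$ behaves---insofar as subsets of $\kappa$ are concerned---like a tree-Prikry forcing driven by $P$-point ultrafilters.

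Fix a $\mathbb{P}_E$-name $\lusim{A}$ for a subset of $\kappa$ and a condition $p$ forcing $\lusim{A} \notin V$. The first step is to invoke the Prikry property of $\mathbb{P}_E$ (Merimovich \cite{Mer1,Mer2}) together with the $\kappa$-closure of the direct extension order $\leq^*$, producing $p^* \leq^* p$ such that every statement ``$\check\xi \in \lusim{A}$'' is decided by $(p^*){}^{\smallfrown}\vec\nu$ via some sequence $\vec\nu$ of one-point extensions, and such that the decoding depends only on the restricted values $\nu\restriction d$, where $d=\dom(f^{p^*})$. Thus there is a function $F\in V$ with $\lusim{A} = F(\langle \nu_n\restriction d \mid n<\omega\rangle)$.

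Next I would identify the restricted generic $\langle \nu_n\restriction d \mid n<\omega\rangle$ with a generic branch of a tree-Prikry forcing driven by $P$-point ultrafilters. Inside a good set for $E(d)$, the function $\nu$ is essentially coded by $\nu(\kappa)$: the domain is determined by $\nu(\kappa)$, and each $\nu(\alpha)$ for $\alpha\in d$ is a measure-one function of $\nu(\kappa)$ via the extender structure. The hypothesis on $h$ ensures that each $U_\alpha$ is a $P$-point, so the induced successor ultrafilters along the branch can be organized into a tree $\mathbb{U}=\langle U_a \mid a\in [\kappa]^{<\omega}\rangle$ in $V$ of $P$-point ultrafilters on $\kappa$, whose tree-Prikry forcing $P(\mathbb{U})$ adds the same $\omega$-sequence, and hence the same $A$, as the sub-forcing below $p^*$ with fixed support $d$.

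The main technical task will then be to verify that a $P(\mathbb{U})$-generic $H$ can be read off from $G$ in such a way that $A\in V[H]$, so that $A\in V[H]\setminus V$. Once that is in place, Theorem \ref{ThesisRes} applied to $P(\mathbb{U})$ delivers $\cf^{V[A]}(\kappa)=\omega$, as desired. The main obstacle I expect is the bookkeeping in the previous paragraph: producing the tree $\mathbb{U}$ in $V$ explicitly from $E(d)$ and the projections to the $U_\alpha$'s, while keeping track of the fact that branching in $\mathbb{P}_E$ happens over an ultrafilter on good functions rather than directly over $\kappa$. The good-set axioms from Definition \ref{Carmi Version} should make this identification routine but notationally awkward.
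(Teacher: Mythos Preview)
Your overall strategy---reduce to a Prikry-type forcing with $P$-point measures and invoke Theorem~\ref{ThesisRes}---is the right one, and it is what the paper does. But the central technical claim in your second paragraph is false, and without it the reduction collapses.

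You assert that inside a good set, ``each $\nu(\alpha)$ for $\alpha\in d$ is a measure-one function of $\nu(\kappa)$ via the extender structure.'' The projections go the other way: the normal coordinate $\nu(\kappa)$ is the \emph{least} informative, and for $\alpha\in d$ with $\alpha>\kappa$ the value $\nu(\alpha)$ is not determined by $\nu(\kappa)$ on any measure-one set (otherwise $U_\alpha$ would be Rudin--Keisler equivalent to the normal $U_\kappa$, contradicting that $\alpha$ is a generator). The good-set axiom only says that $\dom(\nu)$ is determined by $\nu(\kappa)$, not the values. So your proposed coding of the restricted generic by the normal Prikry sequence does not work, and there is no tree $\mathbb{U}$ built from $\nu(\kappa)$ that recovers $A$.

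The paper repairs this by going \emph{above} the support rather than below. After using $\kappa^+$-properness to trap the name $\lusim{A}$ inside an elementary submodel $N$ of size $\kappa$ (this plays the role of your iterated Prikry-property argument), one collects $X=\bigcup\{\supp(q):q\in N\cap\mathbb{P}_E\}$, a set of size $\kappa$, and finds a single coordinate $\alpha^*<\lambda$ dominating $X$ in the extender order, i.e.\ with $(j\restriction X)^{-1}=j_E(f)(\alpha^*)$ for some $f\in V$. On a measure-one set this gives $\mu\restriction X=f(\mu(\alpha^*))$, so the $\alpha^*$-coordinate alone codes the entire $X$-restricted generic. One then checks $V[G\restriction(X\cup\{\alpha^*\})]=V[G_{\alpha^*}]$, where $G_{\alpha^*}$ is generic for the ordinary Prikry forcing $\Pri(U_{\alpha^*})$ with the single $P$-point $U_{\alpha^*}$. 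Theorem~\ref{ThesisRes} then applies directly---no tree of ultrafilters is needed.
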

\begin{proof}

Work in $V$. Suppose that $\lusim{A}$ is a name of a subset of $\kappa$ and some $p\in \mathbb{P}_E$ forces that it is a new subset.

Let us use $\kappa^+-$properness of the forcing $\mathbb{P}_E$, see \cite[Claim 2.7]{Mer2} or \cite[Claim 3.29]{Mer}.
Pick now $N\preceq H_\chi$, for some $\chi$ large enough such that:

\begin{enumerate}
  \item $|N|=\kappa$,
  \item $N\supseteq {}^{\kappa>}N$,
  \item $E, \mathbb{P}_E, p, \lusim{A} \in N$.
\end{enumerate}

The properness implies that there is $p^*\geq^* p$ which is $\l N, \mathbb{P}_E\r-$generic, i.e.
$$p^*\Vdash (\forall D\in N (\text{ if } D \text{ is a dense open, then } D\cap N\cap \lusim{G}\not = \emptyset)).$$
In particular, for every $\nu<\kappa$, the dense open set
$$D_\nu:=\{q \mid \exists\alpha. q\Vdash otp(\lusim{A})>\nu\rightarrow\text{  the } \nu-\text{th element of } \lusim{A} \text{ is } \alpha\}$$
is definable from $\lusim{A}$ and $\nu$, hence in $N$ and it is  dense open by elementarity.

Consider $X=\cup_{p\in \mathbb{P}_E\cap N}\supp(p)$, since $\supp(p),N$ are of size $\kappa$, we have that $|X|\leq\kappa$. There exists $\alpha^*<\lambda$ such that for some $f\in V$, $j_E(f)(\alpha^*)=(j\restriction X)^{-1}$(See for example \cite[Lemma 3.3]{Gitik2010}). 

Denote $Y=X\cup\{\alpha^*\}$ and fix a set $R\in E_{Y}$ such that if $\mu\in R$, then $f(\mu(\alpha^*))=\mu\restriction X$. Such a set exists since $j_E(f)(j^{-1}(j(\alpha^*)))=(j\restriction X)^{-1}$, hence $$(j\restriction Y)^{-1}\in j_E(\{\mu\in ob(Y)\mid f(\mu(\alpha^*))=\mu\restriction X\}).$$
Find a condition $p_*\in G$ such that $Y\subseteq \supp(p)$ and $A^{p_*}\restriction Y\subseteq R$. 
Define $G\restriction Y=\{p\restriction Y \mid  p\in G/p_*\}$. Then by genericity of $p^*$ and definition of $Y$, for every $\alpha<\kappa$ there is $p_\alpha\in G\cap D_\nu\cap N$, hence $\supp(p_\alpha)\subseteq Y$ and we can find $p_\alpha\leq p^*_\alpha\in G\restriction Y\cap D_\nu$. It follows that $A\in V[G\restriction Y]$.  Let $G_{\alpha^*}=\{p\restriction\{\alpha^*\}\mid  p\in G/p_*\}$, in particular, $p_0:=p_*\restriction \{\alpha^*\}\in G_{\alpha^*}$. Note that $G_{\alpha^*}$ is essentially a Prikry generic filter for  $\Pri(U_{\alpha^*})$
\begin{claim}
$V[G\restriction Y]=V[G_{\alpha^*}]$.
\end{claim}
\begin{proof}
Inclusion from right to left is clear as $\alpha^*\in Y$. For the other direction,  let  $p_0=\l t_0,B_0\r\leq q=\l t, B\r\in G_{\alpha^*}$. For every $|t_0|<i\leq |t|$ $t(i)\in B\subseteq B_0$, by the property of $R$, we have that $\mu_i:=f(t(i))\smallfrown t(i)\in A^{p^*}$ such that $\mu_i(\alpha^*)=t(i)$. Now define $q'=\l f,B'\r$ as follows:
 $\dom(f)=Y$ and $$f=f^{p^*}{}^{\smallfrown}\mu_{|t_0|+1}{}^{\smallfrown}...{}^{\smallfrown}\mu_{|t|}.$$ In particular $f(\alpha^*)=t\geq f^{p_*}(\alpha^*)$. Also, let $B'=\{\mu\mid \mu(\alpha^*)\in B', \ f(\mu(\alpha^*))=\mu\restriction X\}$. We claim that $G\restriction Y=\{q'\mid q\in G_{\alpha^*}/p_0\}$. Indeed if $p\in G/p_*$ then $q=p\restriction\{\alpha^*\}\in G_{\alpha^*}$ and it is straightforward to check that $q'=p\restriction Y$.  It follows that $G\restriction Y$ is definable in $V[G_{\alpha^*}]$.
\end{proof}

By our assumption $U_{\alpha^*}$ is a $P-$point ultrafilter.
Now, Theorem~\ref{ThesisRes} applies, so
$$V[A]\models \cof(\kappa)=\omega.$$

\end{proof}

\subsection{ The original version }

The difference here from the forcing of the previous section is that the order $\leq^*$ is not $\kappa^+-$closed.
However, we will show that the forcing is still  $\kappa^+-$proper.

Assume for simplicity that $E$ is a $(\kappa, \kappa^{++})-$extender and the function $\nu\mapsto \nu^{++}$ represents $\kappa^{++}$ in the ultrapower.

Let $\calP_E$ be the forcing of \cite{Git-Mag} with $E$.

\begin{lemma}
Assume $p\in \calP_E$.
Let $N\preceq H_\chi$, for some $\chi$ large enough such that:

\begin{enumerate}
  \item $|N|=\kappa$,
  \item $N\supseteq {}^{\kappa>}N$,
  \item $E, \calP_E, p \in N$.
\end{enumerate}
Then there is $p^*\geq p$ which is $\l N, \calP_E\r-$generic.

\end{lemma}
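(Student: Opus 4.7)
The strategy is to mimic the $\kappa^+$-properness argument used in the previous subsection for Merimovich's version (which proceeded via \cite[Claim~2.7]{Mer2}), compensating for the lack of $\kappa^+$-closure of $\leq^*$ by leveraging the $\leq_E$-structure on the extender. The key asymmetry is that $N\cap\kappa^{++}$ has cardinality $\kappa$, while the generators of $E$ form a set of size $\kappa^{++}$ that is $\leq_E$-directed: any $\kappa$-sized subset admits a common $\leq_E$-upper bound. Pick one such $\gamma^*\in\kappa^{++}\setminus N$ with $\gamma\leq_E\gamma^*$ for every $\gamma\in N\cap\kappa^{++}$. This $\gamma^*$ will be the master coordinate of the desired $p^*$ and will \emph{absorb} every coordinate that appears in any condition of $\calP_E\cap N$ through the projections $\pi_{\gamma^*,\gamma}$.

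\textbf{Construction.} First, directly extend $p$ to $p_0\geq^*p$ with $mc(p_0)=\gamma^*$ by lifting $f^p(mc(p))$ and $T^p$ through $\pi_{\gamma^*,mc(p)}^{-1}$ and adjusting the permitted-sets so that clauses (5)--(7) of Definition~\ref{Original Version} hold. Enumerate the dense open subsets of $\calP_E$ that lie in $N$ as $\langle D_\alpha\mid\alpha<\kappa\rangle$; this is possible since $|N|=\kappa$. By recursion on $\alpha<\kappa$, build a $\leq^*$-increasing chain $\langle p_\alpha\mid\alpha\leq\kappa\rangle$, all of whose master coordinate equals $\gamma^*$, so that at stage $\alpha+1$ the tree $T^{p_{\alpha+1}}$ is shrunk inside $T^{p_\alpha}$ with the following property: for every finite stem-extension $\vec{\nu}$ on $T^{p_{\alpha+1}}$, the projections $\pi_{\gamma^*,\gamma}(\vec{\nu})$ into the coordinates $\gamma\in N\cap\supp(p_{\alpha+1})$ assemble into a condition that extends (in the sense of $\calP_E$) some element of $D_\alpha\cap N$. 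Such a shrinking is obtained by applying the strong Prikry property \emph{inside} $N$ to $D_\alpha$ (valid there by elementarity since $D_\alpha\in N$) and then pulling the resulting tree back through the projections $\pi_{\gamma^*,\gamma}^{-1}$. At limit $\alpha<\kappa$ the intersection of the $<\kappa$-many previously built trees lies in $U_{\gamma^*}$ by $\kappa$-completeness; at stage $\alpha=\kappa$ the $\kappa$-many trees are amalgamated using the non-normal diagonal intersection $\Delta^*$ from Section~1 (available because $U_{\gamma^*}$ is a $\kappa$-complete non-normal ultrafilter projecting to the normal measure via a fixed $\pi_{\gamma^*,\kappa}$). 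Set $p^*:=p_\kappa$.

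\textbf{Verification and main obstacle.} To check that $p^*$ is $\langle N,\calP_E\rangle$-generic, fix $D=D_\alpha\in N$ and $q\geq p^*$. Split $q$ into the finite sequence $\vec{\nu}$ of Prikry points added at coordinate $\gamma^*$ together with a further direct extension; the projections $\pi_{\gamma^*,\gamma}(\vec{\nu})$ for $\gamma\in N\cap\supp(q)$ assemble into a condition in $\calP_E$ that extends $p_{\alpha+1}$ restricted to $N$-coordinates and hence, by the stage-$(\alpha+1)$ shrinking, extends some $a\in D_\alpha\cap N$. Since $mc(q)=\gamma^*\geq_E mc(a)$ and the Prikry points of $q$ on every coordinate in $\supp(a)\subseteq N$ are precisely the projections prescribed by $a$, a common extension of $q$ and $a$ is built by unifying supports at $\gamma^*$ and routing $a$'s information through the projections from $\gamma^*$; this witnesses compatibility. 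The main obstacle, and the reason this lemma demands a separate argument rather than a direct appeal to closure, is the final limit step: intersecting $\kappa$-many $U_{\gamma^*}$-splitting trees in the absence of $\kappa^+$-completeness is exactly what the $\Delta^*$-intersection was designed for, and verifying that the resulting tree still satisfies the ``permitted'' clauses (5)--(7) of Definition~\ref{Original Version} requires a delicate bookkeeping argument exploiting the coherence of the projections $\pi_{\gamma^*,\gamma}$ in the nice system.
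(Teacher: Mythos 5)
Your overall architecture (enumerate the $\kappa$ dense sets of $N$, build a $\leq^*$-increasing $\kappa$-chain, amalgamate with the non-normal diagonal intersection $\Delta^*$ over a coordinate $\leq_E$-above everything in $N$) matches the paper's, but there is a genuine gap in the step that actually carries the weight of the argument, and you have misidentified where the difficulty lies.

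The gap is the \emph{lower-part problem}. Your stage-$(\alpha+1)$ shrinking only guarantees that stem-extensions $\vec{\nu}$ lying \emph{on} $T^{p_{\alpha+1}}$ project onto an element of $D_\alpha\cap N$. But the final tree of $p^*$ is a diagonal intersection: a point $\rho$ of it is only guaranteed to lie in (the pullback of) $T^{p_{\alpha+1}}$ when $\alpha<\rho^0$. Hence an arbitrary $q\geq p^*$ meeting $D_\alpha$ has a stem $\langle\xi_1,\dots,\xi_{k}\rangle$ whose initial segment (the $\xi_i$ with $\xi_i^0$ small relative to $\alpha$, equivalently with $\xi_i$ below $\min(A^0_{\alpha})$) need \emph{not} lie on $T^{p_{\alpha+1}}$ at all, so your stage-$(\alpha+1)$ property says nothing about it. The paper closes this hole with its condition (d): for \emph{every} potential lower part $\vec{\xi}\in[\min(A^0_\nu)]^{<\omega}$ (not just those coming from $A_\nu$), there is a $k$ such that all $k$-step extensions of $p_\nu{}^\frown\vec{\xi}$ by points of $A_\nu$ land in $D_\nu$. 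Securing (d) is the technical heart of the proof: it forces a sub-recursion over all $\vec{\xi}\in[\eta]^{<\omega}$ for increasing $\eta$, seeking a fixed point $\eta=\min(A^0)$, whose termination requires an auxiliary elementary submodel of size $\kappa^+$ and a $\Delta^*$-argument. By contrast, the step you flag as the ``main obstacle'' --- intersecting $\kappa$-many splitting trees without $\kappa^+$-completeness --- is handled routinely by $\Delta^*$ exactly as in the Prikry property proof, and is not where the work is.

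A secondary issue: you fix the master coordinate $\gamma^*\notin N$ from the outset, so your intermediate conditions $p_\alpha$ are not in $N$, and ``$p_{\alpha+1}$ restricted to $N$-coordinates'' is not a condition of $\calP_E$ in the Gitik--Magidor version (the support must contain its own $\leq_E$-maximum and the tree lives on that maximum). This is why the paper builds the $p_\nu$ \emph{inside} $N$ --- also needed so that the element of $D_\nu$ eventually met belongs to $N$, as genericity requires --- and only adjoins the outside coordinate $\alpha$ in the very last step.
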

\begin{proof}
Let $\l D_\nu \mid \nu<\kappa\r$ be an enumeration of all dense open subsets of $\calP_E$ which are in $N$.
Proceed by induction and define a $\leq^*-$increasing sequence $\l p_\nu\mid \nu<\kappa\r$ of extensions of $p$ such that,
for every $\nu<\kappa$,
\begin{enumerate}
  \item [(a)]$p_\nu\in N$.
  \item [(b)] $\min(A_\nu^0)>\nu$, where $A_\nu^0=\{\rho^0\mid \rho\in A_\nu\}$ is the projection of $A_\nu$ to the normal measure,
  \item [(c)] there is $k<\omega$ such that for every $\l \rho_1,..., \rho_k\r \in [A_\nu]^k$, $p_\nu{}^\frown \l \rho_1,..., \rho_k\r \in D_\nu$.
\end{enumerate}

It is natural now to move now to a coordinate $\eta$ which is above everything in $N$ and to take the diagonal intersection $\Delta^*$ of the pre-images of $A_\nu$'s according to the normal measure. 
However, in order to have the property (c) above, something more is needed.
Namely, we would like to have the following:

\begin{enumerate}
  \item[{(d)}] for every $\l \xi_1, ..., \xi_m\r \in [\min(A_\nu^0)]^{<\omega}$, if $p_\nu{}^\frown \l \xi_1, ..., \xi_m\r\in\calP_E$ then there is $k<\omega$ such that: $$\text{for every }\l \rho_1,..., \rho_k\r \in [A_\nu]^k, \ p_\nu{}^\frown \l \xi_1, ..., \xi_m\r^\frown \l \rho_1,..., \rho_k\r \in D_\nu.$$
\end{enumerate}

Given (d), as we will see, the idea above works fine.
Let us construct a sequence which satisfies the conditions (a)-(d).

Pick $p_0\in N$ such that $p_0\geq^* p$ and (d) is satisfied. To define $p_1$, use the strong Prikry property to pick a condition $p_1' \in N$, $p_1'\geq^*p_0$ and
$$\text{ there is } k<\omega \text{ such that for every } \l \rho_1,..., \rho_k\r \in [A_1']^k, p_1'{}^\frown \l \rho_1,..., \rho_k\r \in D_1.$$
 Let $\eta_0=\min((A_1')^0)$, by definition of $\pi_{\alpha,\kappa}$ it follows that $\eta_0$ is an inaccessible cardinal.
\\Let $\l \vec{\xi}_i \mid i<\eta_0\r $ be an enumeration of $[\eta_0]^{<\omega}$.
\\Define $\leq^*-$increasing sequence $\l q_i\mid i<\eta_0\r$.
\\Consider $p_1'{}^\frown \vec{\xi}_0$. If it does not extend $p_0$, then set $q_0=p_1'$.
Otherwise, pick (inside $N$) $r_0\geq^* p_1'{}^\frown \vec{\xi}_0$ such that
$$\text{ there is } k<\omega \text{ such that for every } \l \rho_1,..., \rho_k\r \in [A(r_0)]^k, r_0{}^\frown \l \rho_1,..., \rho_k\r \in D_1.$$
Let $q_0=\l f^{q_0},A^{q_0}\r$  be obtained from $r_0$ by  removing $\vec{\xi}_0$ from all coordinates which appear in $p_1'$ (and leaving at new ones), and then, adding a larger maximal coordinate. Namely, $\dom(f^{q_0})=\dom(f^{r_0})\cup\{\alpha_0\}$ where $\alpha_0$ is $\leq_E$ strictly above all the ordinals in $\dom(f^{r_0})$. Let $t$ be such that $\pi_{\alpha,\kappa}''t=f^{p'_1}(\kappa)$ and for every $\gamma\in \dom(f^{q_0})$
$$f^{q_0}(\gamma)=\begin{cases}f^{p'_1}(\gamma) & \gamma\in Supp(p'_1)\\
f^{r_0}(\gamma) &\gamma\in Supp(r_0)\setminus Supp(p'_1)\\ t & \gamma=\alpha_0\end{cases}.$$
Let $A^{q_0}=\pi_{\alpha_0,mc(r_0)}^{-1}[A^{r_0}]$. Then $q_0\in N$ and also $q_0\in\calP_E$. 
By shrinking $A^{q_0}$ a bit more (as in \cite[Lemma 3.10]{Gitik2010}) we secure condition (6), and $p_1'\leq^* q_0$.
\\Define $q_1$ in the exact same fashion only replacing $p_1'$ by $q_0$ and $\vec{\xi}_0$ by $\vec{\xi}_1$.
\\Continue similarly for every $i<\eta_0$, and finally,  let $q_{\eta_0}$ be a $\leq^*-$ extension of all $q_i$'s.
\\If $\eta_0=\min((A(q_{\eta_0}))^0)$, then set $p_1=q_{\eta_0}$.
Otherwise, let $\eta_1=\min((A(q_{\eta_0}))^0)$.
Repeat the process above with $\eta_1$ replacing $\eta_0$ and $q_{\eta_0}$ replacing $p_1'$.
Continuing in a similar fashion, we hope to reach some $\eta$ which is a fixed point, i.e., $\eta=\min((A(q_{\eta}))^0)$. However, we need to do this a bit more carefully at limit stages. Let us pick an elementary substructure $N'\prec V_{\mu}$ for sufficiently large $\mu$ of cardinality $\kappa^+$, closed under $\kappa-$sequences, including $p_1',p_0,\calP_E,E,...$. We can find some $\alpha<\kappa^{++}$ such that for every $p\in N'\cap\calP_E$ and every $\gamma\in\supp(p)$, $\gamma<_E\alpha$. Define a sequence of condition $\l q_{\eta_i}\mid i<\eta\r$ of conditions of $N'$. 

We start with $q_{\eta_0}$ which is already defined. Let $Y_0\in U_\alpha$ such that the commutativity requirement from Definition \ref{Original Version}(6) holds with respect to  $\supp(q_{\eta_0})$.
If $\eta_0=\min(Y_0^0)$ we are done. Otherwise, let $\eta_1=\min(Y_0^0)$ and construct $q_{\eta_1}$ in a similar fashion going over all possible $\vec{\xi}\in[\eta_1]^{<\omega}$, construct $Y_1\in U_\alpha$ to satisfy (6) with respect to $\supp(q_{\eta_1})$.
At a general successor step , we are given $\eta_i, q_{\eta_i}$, and $Y_i$.  Check if $\eta_i=\min(Y_i^0)$, if yes, stop the construction, set $p_1=q_{\eta_i}$ and we are done. Otherwise, let $\eta_{i+1}=\min(Y^0_{i})$, construct $q_{\eta_{i+1}}$ above $q_{\eta_i}$ as we did with $q_{\eta_0}$, going over all possible $\vec{\xi}\in[\eta_{i+1}]^{<\omega}$, then find $Y_{i+1}\in U_\alpha$ satisfying (6) with respect to $\supp(q_{\eta_{i+1}})$.   
At limit stages $\delta$  take $\eta_\delta=\sup_{i<\delta}\eta_i$, check if $\eta_\delta=\min((\cap_{i<\delta}Y_i)^0)$, if yes, stop the construction and consider the condition $p_1=q_{\eta_\delta}$ with maximal coordinate $\alpha$ , putting $\cap_{i<\delta}Y_i$ as his measure one set. Then $q_{\eta_\delta}$ will be as desired. Otherwise,  we find any $q_{\eta_\delta}\in N'$ above all the previous $q_{\eta_i}$, construct $Y_{\delta}\in U_\alpha$ with respect to $\supp(q_{\eta_\delta})$.   We can further require that $\pi_{\alpha,mc(q_{\eta_i})}{}'' Y_i\subseteq A(q_{\eta_i})$ and that $\min(A(q_{\eta_i})^0)>i$.

Assume toward a contradiction that no suitable $q_{\eta_\delta}$ was found and that the process goes all the way up to $\kappa$. Consider $Y^*=\Delta_{i<\kappa}^*Y_i\in U_\alpha$ and let $\mu$ be any limit point of $Y^*$. Consider step $\mu^0$ of the construction, we have $\eta_{\mu^0}=\sup_{i<\mu^0}\eta_i$. For every $i<\mu^0$, we have that $\mu\in Y_i$, hence $\mu\in\cap_{i<\mu^0}Y_i$ and $\mu^0\in (\cap_{i<\mu^0}Y_i)^0$, it follows that $\eta_{\mu^0}\geq\mu^0\geq\min((\cap_{i<\mu^0}Y_i)^0)\geq\eta_{\mu^0}$. This means that $\eta_{\mu^0}=\mu^0=\min((\cap_{i<\mu^0}Y_i)^0)$ which indicates that the construction should have terminated at step $\mu_0$, contradiction. \\
We conclude that $p_1$ is defined. The further construction of $p_\nu$'s is similar, exploiting the $\kappa-$closure of $\leq^*$.

Pick now some $\alpha\geq_E \beta,$ for every $\beta\in N\cap \dom(E)$ which exists since $|N|=\kappa$. Set
$$A=\Delta^*_{\nu<\kappa}\tilde{A}(p_\nu)=\{\rho<\kappa\mid \forall \nu<\rho^0 (\rho\in \tilde{A}(p(\nu)))\},$$
where $\tilde{A}(p_\nu)$ is the pre-image of ${A}(p_\nu)$ under the projection from $\alpha$ to $mc(p_\nu)$.
Define a condition $p^*=\l f^*,A^*\r$ from the sequence $\l p_\nu \mid \nu<\kappa\r$ as follows: $\supp(p^*)=\cup_{\nu<\kappa}\supp(p_\nu)\cup\{\alpha\}$, from the way we defined $p_\nu$ there is no problem defining $f^*=\cup_{\nu<\kappa}f^{p_\nu}\cup\{\l\alpha,t\r\}$ where $t$ is any sequence such that $\pi_{\alpha,\kappa}''t=f^*(\kappa)$. Then we take $A^*=A$. It follows that $p^*\in \calP_E$, and it has the property that for every $\nu<\kappa$ and any sequence $$\xi_1<..,\xi_k<\min(A^0_\nu)\leq\xi_{k+1}<...<\xi_n$$ of ordinals from $A$, $p_\nu^{\smallfrown}\l\xi_1,...,\xi_n\r\leq p^*{}^{\smallfrown}\l \xi_1,..,\xi_n\r$. \footnote{Although $\xi_1,..,\xi_k\notin A_\nu$, the condition $p_\nu^{\smallfrown}\l\xi_1,...,\xi_n\r$ is a legitimate condition which is simply not above $p_\nu$.} Let us argue that it is $\l N,\calP_E\r-$generic. Let $G$ be generic with $p^*\in G$. we need to prove that $G\cap N\cap D_\nu\neq\emptyset$ for every $\nu<\kappa$. By density, pick any $p^{\smallfrown}\l\xi_1,...,\xi_{k_1}\r\leq^*q\in D_\nu\cap G$, and let $m$ be such that $\xi_1,..,\xi_m<\min(A(p_\nu))\leq \xi_{m+1}<...<\xi_{k_1}$. By condition (d), there is $k_2$ such that any $\l\nu_1,...,\nu_{k_2}\r\in[A_\nu]^{k_2}$ extension  $p_\nu^{\smallfrown}\l\xi_1,..,\xi_m\r^{\smallfrown}\l \nu_1,...,\nu_{k_2}\r\in D_\nu$. If necessary, extend $q$ to $$q{}^{\smallfrown}\l \xi_{k_1+1},...,\xi_{k_1+k_2}\r\in G\cap D_\nu,$$ and suppose without loss of generality that $k_1\geq m+k_2$.
Since $\nu<\min(A(p_\nu)^0)\leq \xi_{m+1}$, by definition of $\pi_{\alpha,\kappa}$, it follows that $\nu<\xi_{m+1}^0$, and by diagonal intersection, $\xi_{m+1},...,\xi_{k_1}\in A_\nu$. It follows that  $$p_{\nu}^{\smallfrown}\l\xi_1,..,\xi_m{}\r^{\smallfrown}\l\xi_{m+1},...,\xi_{m+k}\r\in D_\nu.$$ Also, $p_{\nu}^{\smallfrown}\l\xi_1,..,\xi_m{}\r^{\smallfrown}\l\xi_{m+1},...,\xi_{m+k}\r\leq q$ hence in $G$.   Hence $$p_{\nu}^{\smallfrown}\l\xi_1,..,\xi_m\r{}^{\smallfrown}\l\xi_{m+1},...,\xi_{m+k}\r\in G\cap D_\nu\cap N$$ as wanted.

\end{proof}

Now, as in the previous section the following holds.

\begin{theorem}\label{Woodin-answer}
Let $G\subseteq \calP_E$ be a generic. Suppose that $A\in V[G]\setminus V$ is a subset of $\kappa$.
Then $\kappa$ changes its cofinality to $\omega$ in $V[A]$.

\end{theorem}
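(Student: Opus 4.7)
The plan is to mirror the argument of Theorem~\ref{Characterization of P-point extension} for Merimovich's version, now using the $\kappa^+$-properness lemma just established for $\calP_E$. The assumption of Theorem~\ref{Woodin-answer} is that $A$ is a new subset of $\kappa$; let $\lusim{A}$ be a name and $p$ a condition forcing that $\lusim{A} \notin V$. I want to show that $A$ is already definable from the restriction of $G$ to a single coordinate $\alpha^*$, so that $V[A] \subseteq V[G_{\alpha^*}]$, where $G_{\alpha^*}$ is essentially $\Pri(U_{\alpha^*})$-generic. Since in our setup every $U_\alpha$ is a $P$-point (because all generators lie below $j_E(h)(\kappa)$ for some $h:\kappa\to\kappa$), Theorem~\ref{ThesisRes} will then yield $\cf^{V[A]}(\kappa)=\omega$.

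First I would pick $N\prec H_\chi$ of size $\kappa$, closed under ${<}\kappa$-sequences, with $E,\calP_E,p,\lusim{A}\in N$. Apply the previous lemma to obtain $p^*\geq^* p$ that is $\l N,\calP_E\r$-generic. For each $\nu<\kappa$, the set
\[
D_\nu=\{q\mid \exists \alpha.\ q\Vdash \text{``the }\nu\text{-th element of }\lusim{A}\text{ is }\alpha\text{''}\}
\]
lies in $N$ and is dense open, so by genericity $G\cap D_\nu\cap N\neq \emptyset$ below $p^*$. Consequently $A$ is already computable inside $V[G\restriction X]$, where $X:=\bigcup_{q\in N\cap \calP_E}\supp(q)$, a set of size $\kappa$. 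Next, pick $\alpha^*<\lambda$ that is $\leq_E$-above every ordinal in $X$; such $\alpha^*$ exists because $E$ is an extender over $\kappa$ of length $\geq \kappa^{++}$ and $|X|=\kappa$. Fix $f\in V$ with $j_E(f)(\alpha^*)=(j_E\restriction X)^{-1}$, and shrink the measure-one set of $p^*$ at $\alpha^*$ to a set $R$ so that for every $\mu\in R$ and every $\gamma\in X$, $\pi_{\alpha^*,\gamma}(\mu)$ is the $\gamma$-coordinate predicted by $f(\mu)$.

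The key reduction step is then: $V[G\restriction (X\cup\{\alpha^*\})]=V[G_{\alpha^*}]$. The inclusion $\supseteq$ is trivial, and for $\subseteq$ one argues exactly as in the Merimovich case: given any condition $q\in G$ with maximal coordinate $mc(q)\leq_E \alpha^*$, the $q$-information at each $\gamma\in\supp(q)$ is recovered from $q$'s data at $\alpha^*$ via the chosen projections $\pi_{\alpha^*,\gamma}$ and via the recipe encoded by $f$, using the property of $R$ above. This is essentially just the commutativity condition (clause (6) in Definition~\ref{Original Version}), which is what the ``nice system'' structure guarantees. Hence $A\in V[G_{\alpha^*}]$, which is a generic extension by $\Pri(U_{\alpha^*})$. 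Since $U_{\alpha^*}$ is a $P$-point, Theorem~\ref{ThesisRes} (applied to the constant tree $\mathbb U$ with $U_a=U_{\alpha^*}$) gives $\cf^{V[A]}(\kappa)=\omega$.

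The main obstacle is the reduction step showing $V[G\restriction (X\cup\{\alpha^*\})]=V[G_{\alpha^*}]$ in the Gitik--Magidor setup. Unlike Merimovich's version, here conditions carry trees rather than single measure-one sets, and the order involves comparing trees after projection; one must verify that the projected generic at $\alpha^*$ still encodes enough to recover the tree data at all smaller coordinates in $X$, including the permitted/unpermitted distinction and the end-extension clauses (2)--(6) of the order. The closure of $N$ under ${<}\kappa$-sequences, together with the fact that $p^*$ can be taken to force that its Prikry sequence at $\alpha^*$ lies in $R$, should make this go through by the same projection bookkeeping used in the proof of Theorem~\ref{Characterization of P-point extension}.
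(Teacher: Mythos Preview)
Your proposal is correct and follows essentially the same route as the paper, which simply states that Theorem~\ref{Woodin-answer} holds ``as in the previous section'' once the $\kappa^+$-properness lemma for $\calP_E$ is available. The reduction to a single coordinate $\alpha^*$ via the $\l N,\calP_E\r$-generic condition and the application of Theorem~\ref{ThesisRes} to the $P$-point $U_{\alpha^*}$ is exactly what is intended; the only minor discrepancy is that the properness lemma yields $p^*\geq p$ rather than $p^*\geq^* p$, but this is immaterial for the argument.
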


\subsection{ The Merimovich version }

The previous subsection implies in particular that $\calP_E$ and  $\mathbb{P}_E$ with $P-$points cannot add $\kappa^{++}$-many mutually generic Cohen functions. In this subsection, we will provide the general argument that the Extender-based Prikry forcing $\mathbb{P}_E$ cannot add $\kappa^{++}$-many distinct subsets of $\kappa$ which preserves even the regularity of $\kappa$.

\begin{theorem}\label{CarmiVersion theorem}
Assume GCH\footnote{ $2^\kappa=\kappa^+$ is enough, since $\kappa$ is a measurable, and so, $2^\nu=\nu^+$ on  relevant sets.} and let $E$ an extender over $\kappa$.
Let $G$ be a generic subset of $\mathbb{P}_E$ and let
$\l {A}_\alpha\mid \alpha<\kappa^{++} \r$ be different subsets of $\kappa$ in $V[G]$.
Then there is $I\subseteq \kappa^{++}, I\in V, |I|=\kappa$ such that $\kappa$ is a singular cardinal of cofinality $\omega$ in
$V[\l {A}_\alpha\mid \alpha\in I\r]$.
In particular, there is no intermediate model of $V[G]$ where $\kappa$ is measurable and $2^\kappa>\kappa^+$.
\end{theorem}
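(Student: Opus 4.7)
The plan is to extend the properness-based argument of Theorem~\ref{Characterization of P-point extension}, which treated a single new subset of $\kappa$ under the standing hypothesis that all measures of $E$ are $P$-points, so that it handles the whole $\kappa^{++}$-sequence $\l A_\alpha\mid \alpha<\kappa^{++}\r$ for an arbitrary extender $E$. Since $|\calP(\kappa)^V|=\kappa^+$ by GCH, I first discard at most $\kappa^+$ indices to assume $A_\alpha\notin V$ for every $\alpha$, and fix a name $\l \dot A_\alpha\mid \alpha<\kappa^{++}\r$ together with a condition $p\in G$ forcing that the $\dot A_\alpha$ are pairwise distinct and not in $V$.

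For each $\alpha<\kappa^{++}$, I invoke the $\kappa^+$-properness of $\mathbb{P}_E$ (\cite[Claim~2.7]{Mer2}) to fix $N_\alpha\prec H_\chi$ of cardinality $\kappa$ with ${}^{\kappa>}N_\alpha\subseteq N_\alpha$ and $\{E,\mathbb{P}_E,\dot A_\alpha,p\}\subseteq N_\alpha$, together with an $\l N_\alpha,\mathbb{P}_E\r$-generic direct extension $p^*_\alpha\geq^* p$. Setting $X_\alpha:=\bigcup_{q\in\mathbb{P}_E\cap N_\alpha}\supp(q)$, the argument of Theorem~\ref{Characterization of P-point extension} supplies a coordinate $\gamma_\alpha\in\dom(E)$ and a function $f_\alpha\in V$ with $j_E(f_\alpha)(\gamma_\alpha)=(j_E\restriction X_\alpha)^{-1}$, and shows that $A_\alpha\in V[G\restriction Y_\alpha]=V[G_{\gamma_\alpha}]$, where $Y_\alpha:=X_\alpha\cup\{\gamma_\alpha\}$ has size $\kappa$ and $G_{\gamma_\alpha}$ is the induced $\Pri(U_{\gamma_\alpha})$-generic.

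Next, using GCH (which gives $(\kappa^+)^{<\kappa^+}=\kappa^+<\kappa^{++}$) I apply the $\Delta$-system lemma to $\l Y_\alpha\mid\alpha<\kappa^{++}\r$ (each of size $\kappa$) and extract $I\subseteq\kappa^{++}$ of cardinality $\kappa$ so that $Y:=\bigcup_{\alpha\in I}Y_\alpha$ has cardinality $\kappa$ and moreover is bounded below $j_E(h)(\kappa)$ for some $h:\kappa\to\kappa$ in $V$; when $\lambda\leq\kappa^{++}$ this boundedness is automatic with $h(\nu)=\nu^{++}$, matching the hypothesis of subsection~4.1. Under this bound every $U_\beta$ for $\beta\in Y$ is a $P$-point, so the intermediate extension $V\subseteq V[G\restriction Y]$ is a Merimovich Extender-based Prikry extension by $\mathbb{P}_{E\restriction Y}$ with $P$-point measures; the equivalence $V[G\restriction Y]=V[G_{\mathbb{P}_{E\restriction Y}}]$ follows from the standard projection arguments for sub-forcings of Extender-based Prikry forcings.

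By construction $\l A_\alpha\mid\alpha\in I\r\in V[G\restriction Y]$ and every $A_\alpha$ for $\alpha\in I$ is a new subset of $\kappa$. Applying Theorem~\ref{Characterization of P-point extension} inside the extension $V\subseteq V[G\restriction Y]$ yields $\cf^{V[A_\alpha]}(\kappa)=\omega$ for each $\alpha\in I$; since $V[A_\alpha]\subseteq V[\l A_\alpha\mid\alpha\in I\r]$, we obtain $\cf^{V[\l A_\alpha\mid\alpha\in I\r]}(\kappa)=\omega$ as desired, and the ``in particular'' clause follows because measurable cardinals are regular. The main obstacle will be the thinning step: securing that the union $Y$ remains bounded below some $j_E(h)(\kappa)$ with $h\in V$, which for arbitrary $\lambda$ requires combining the $\Delta$-system with the cofinality structure of $\lambda$ and a careful pigeonhole on the $\leq_E$-ranks of the master coordinates $\gamma_\alpha$, so as to land genuinely in the $P$-point regime of subsection~4.1.
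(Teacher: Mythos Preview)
Your strategy is to reduce Theorem~\ref{CarmiVersion theorem} to Theorem~\ref{Characterization of P-point extension} by finding, for enough indices $\alpha$, a master coordinate $\gamma_\alpha$ with $U_{\gamma_\alpha}$ a $P$-point. This is a genuinely different route from the paper's, but it has a real gap that you yourself flag as ``the main obstacle'' and do not resolve.

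The theorem is stated for an \emph{arbitrary} extender $E$ over $\kappa$, and the whole point of this subsection is to go beyond the $P$-point setting of subsection~4.1. Your reduction hinges on bounding the union $Y=\bigcup_{\alpha\in I}Y_\alpha$ below $j_E(h)(\kappa)$ for some $h:\kappa\to\kappa$. For a general $(\kappa,\lambda)$-extender there need be no such $h$: if $\lambda$ exceeds $\sup\{j_E(h)(\kappa):h\in{}^\kappa\kappa\cap V\}$ then some (in fact cofinally many) measures $U_\beta$ are not $P$-points, and the master coordinate $\gamma_\alpha$ dominating $X_\alpha$ under $\leq_E$ will typically be forced into that range. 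No amount of $\Delta$-system thinning or pigeonhole on $\leq_E$-ranks can manufacture a bound that does not exist. So in the generality claimed by the theorem, the step ``land genuinely in the $P$-point regime of subsection~4.1'' simply fails, and with it the appeal to Theorem~\ref{Characterization of P-point extension} and to Theorem~\ref{ThesisRes}.

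The paper's proof avoids this entirely: it never invokes $P$-points or Theorem~\ref{ThesisRes}. Instead it uses a $\Delta$-system on the Cohen parts $f_\alpha$ together with carefully built trees $T_\alpha$ to arrange that, below a single condition $\langle f^*,T^*\rangle\in G$, the decisions about $\lusim{A}_\alpha\cap\vec{\nu}(\kappa)$ depend only on data fixed by the isomorphism type. This forces the synchronization property $(**)$: for all $n$ and all $\alpha,\beta\in[\kappa_{n-1},\kappa_n)$, $A_\alpha\cap\kappa_n=A_\beta\cap\kappa_n$, where $\langle\kappa_n\rangle$ is the normal Prikry sequence. From $(**)$ one reads off an $\omega$-cofinal sequence $\langle\zeta_n\rangle$ \emph{directly} in $V[\langle A_\alpha\mid\alpha<\kappa\rangle]$, with no recourse to any single-coordinate Prikry analysis. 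That combinatorial core is precisely the missing idea in your proposal.
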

\begin{proof}
Let $\l \lusim{A}_\alpha\mid \alpha<\kappa^{++} \r$ be $\mathbb{P}_E-$names of  subsets of $\kappa$.
We will confuse them sometimes with their characteristic functions. Work in $V$, for every $\alpha<\kappa^{++}$, let $N_\alpha$ be an elementary submodel of $H_\theta$ of cardinality $\kappa$ such that
${}^{\kappa>}N_\alpha\subseteq N_\alpha$, $E, P_E, \alpha, \l \lusim{A}_\alpha\mid \alpha<\kappa^{++} \r\in N_\alpha$.
\\Let $f_\alpha\in \mathbb{P}^*_E$ be $N_\alpha-$completely generic, i.e. $f_\alpha^\frown \l \vec{\nu}_1,...,\vec{\nu}_n\r\in \mathbb{P}^*_E$ is $N_\alpha-$generic.

Using $\Delta-$system-like arguments, we can assume that $\l f_\alpha \mid \alpha<\kappa^{++}\r$ form a $\Delta-$system such that for every $\alpha, \beta<\kappa^{++}$,
\begin{enumerate}
  \item $otp(\dom(f_\alpha))=otp(\dom(f_\beta))$, and the order isomorphism between $\dom(f_\alpha)$ and $\dom(f_\beta)$, $\sigma_{\alpha,\beta}$ is constant on the intersection $\dom(f_{\alpha})\cap \dom(f_{\beta})$.
  \item for every $\rho\in \dom(f_\alpha)$, $f_\alpha(\rho)=f_\beta(\sigma_{\alpha\beta}(\rho))$.
\end{enumerate}

Attach to each $\alpha<\kappa^+$ associate an $E(\dom(f_\alpha))-$large tree $T_\alpha$. Define $T_\alpha$ level by level as follows.
Set $Lev_1(T_\alpha)=S_\alpha^0\cup S_\alpha^1$, where


 \begin{enumerate}
        \item for every $\vec{\nu}\in S_\alpha^0$, $\dom(\vec{\nu})$ contains elements in $\dom(f_\alpha)\setminus \dom(f_0)$, if $\alpha>0$,
        \item if $\alpha=0$, then $S_\alpha^0= S_\alpha^1$,
        \item 
        $S_\alpha^1=
        \{\vec{\nu}\mid \vec{\nu} \text{ is an increasing partial function from } \dom(f_0)\cap \dom(f_\alpha) \text{ to } \kappa\}$,
if $\alpha>0$,
        
          \item  for every $ \vec{\nu} \in S_\alpha^0$ the following holds:

           $\l f_\alpha{}^\frown\vec{\nu},B_{\vec{\nu}}\r$
 decides $\lusim{A}_\alpha\cap \vec{\nu}(\kappa)$ for some $E(\dom(f_\alpha))$-tree $B_{\vec{\nu}}$ and such that the decision depends only on
  $\vec{\nu}(\kappa)$.
  \end{enumerate}
In order to find such a tree, we will use the fact that $f_\alpha\in \mathbb{P}_E^*$ is $N_\alpha$-generic, and the set 
$$E=\{f\mid \exists B.\l f,B\r\text{ decides }\lusim{A}_\alpha\cap\vec{\nu}(\kappa)\}$$ being dense open in $\mathbb{P}_E^*$. This implies the existence of a $E(\dom(f_\alpha))-$tree $B_{\vec{\nu}}$   such that
     $$\l f_\alpha{}^\frown  \vec{\nu}, B_{\vec{\nu}}\r \text{ decides }   \lusim{A}_\alpha\cap \vec{\nu}(\kappa).$$
Next, in order to make the decision  to depend only on $\vec{\nu}(\kappa)$, we use ineffability:
Suppose that $\l f_\alpha{}^\frown\vec{\nu}, B_{\vec{\nu}}\r$ forces that $\lusim{A}_\alpha\cap\vec{\nu}(\kappa)=A_\alpha(\vec{\nu})$. Let $g$ be the function $g(\vec{\nu})=A_\alpha(\vec{\nu})$. It follows that:
$$X_\alpha(\l\r):=j(g)( (j\restriction\dom(f_\alpha))^{-1} )\subseteq \kappa.$$
Also, since $crit(j)=\kappa$, it follows that $j(X_\alpha(\l\r))\cap\kappa=X_\alpha(\l\r)$. 
Combine this together with the fact that:
$$j''\dom(f_\alpha))\text{ contains elements not in }j(\dom(f_0))$$
to find a $E(\dom(f_\alpha))$-large set  $S^0_\alpha$, such that $(1)$ holds and for all  $\vec{\nu}\in S^*_\alpha$, $$A_\alpha(\vec{\nu})=X_\alpha(\l\r)\cap \vec{\nu}(\kappa)$$
Finally, we let $Lev_1(T_{\alpha})=S^0_\alpha\cup S^1_\alpha$.
Note that if $\alpha>0$, then   $S^0_\alpha$ and $S^1_\alpha$ are disjoint and therefore    $S_\alpha^1\not \in E(\dom(f_\alpha))$. Define now the next level of $T_\alpha$.
   So let $\vec{\rho}  \in Lev_1(T_\alpha)$ and set $\Succ_{T_\alpha}(\vec{\rho})=S_{\alpha \vec{\rho}}^0\cup S_{\alpha \vec{\rho}}^1$, where

 \begin{enumerate}
 \item for every $\vec{\nu}\in  S_{\alpha \vec{\rho}}^0\cup S_{\alpha \vec{\rho}}^1$, $\vec{\nu}(\kappa)> \sup(\rng(\vec{\rho}))$.
 \item $S_{\alpha \vec{\rho}}^0\subseteq Suc_{B_{\vec{\rho}}}(\vec{\rho})$.
        \item if $\alpha>0$, then for every $\vec{\nu}\in S_{\alpha \vec{\rho}}^0$, $\dom(\vec{\nu})$ contains elements in $\dom(f_\alpha)\setminus \dom(f_0)$, 
        \item if $\alpha=0$, then $S_{\alpha \vec{\rho}}^0= S_{\alpha \vec{\rho}}^1$,
        \item if $\vec{\rho}\in S_\alpha^0$ and $\alpha>0$, then $S_{\alpha \vec{\rho}}^1=\emptyset$,

        \item if $\vec{\rho}\in S_\alpha^1$ and $\alpha>0$, then $S_{\alpha \vec{\rho}}^1
        =\{\vec{\nu}\mid \vec{\nu} \text{ is an increasing partial function from } \\\dom(f_0)\cap \dom(f_\alpha) \text{ to } \kappa,
        \vec{\nu}(\kappa)>\sup(\rng(\vec{\rho}))\}$,

          \item  for every $ \vec{\nu} \in S_{\alpha \vec{\rho}}^0$ the following holds:

           $\l f_\alpha{}^\frown \l \vec{\rho}, \vec{\nu}\r, B_{\vec{\rho} \vec{\nu}}\r$
 decides $\lusim{A}_\alpha\cap \vec{\nu}(\kappa)$ and the decision depends only on
  $\vec{\rho}{}^\frown\vec{\nu}(\kappa)$,\\ for some $E(\dom(f_\alpha))-$tree $ B_{\vec{\rho} \vec{\nu}}$ , which is a subtree of $B_{\vec{\rho}}$.

  \end{enumerate}
The further levels are defined in the same fashion. Denote by $T_\alpha^0$ the tree $T_\alpha$ with $S^1_{\alpha \vec{\nu}_1,...,\vec{\nu}_n}$ removed from $\Succ_{T_\alpha}(\l\vec{\nu}_1,...,\vec{\nu}_n\r)$\footnote{Even if $\l\vec{\nu}_1,...,\vec{\nu}_n\r\in T_\alpha\setminus T^0_\alpha$ the set $\Succ_{T^0_\alpha}(\l\vec{\nu}_1,...,\vec{\nu}_n\r)$ is still defined.}. Clearly, $T_\alpha^0$ is still  $E(\dom(f_\alpha))-$tree.

The tree $T^0_{\alpha}$ has the property that for every $\l\vec{\nu}_1,...,\vec{\nu}_n\r\in T_{\alpha}$, and every $\vec{\nu}\in\Succ_{T^0_\alpha}(\l\vec{\nu}_1,...,\vec{\nu}_n\r)$, item $(2)$ above ensures that $(T^0_\alpha)_{\l\vec{\nu}_1,...,\vec{\nu}_n,\vec{\nu}\r}\subseteq B_{\l\vec{\nu}_1,...,\vec{\nu}_n,\vec{\nu}\r}$ and by item $(7)$ we obtain $$(*) \ \ \l f_{\alpha}^{\smallfrown}\l\vec{\nu}_1,...,\vec{\nu}_n,\vec{\nu}\r,(T^0_{\alpha})_{\l\vec{\nu}_1,...,\vec{\nu}_n,\vec{\nu}\r}\r\Vdash X_\alpha(\l \vec{\nu}_1,...,\vec{\nu}_n\r)\cap\vec{\nu}(\kappa)=\lusim{A}_\alpha\cap\vec{\nu}(\kappa)$$





By shrinking if necessary, we can assume that the trees are isomorphic under the obvious isomorphism induced by the $\Delta-$system.Moreover, by $GCH$, there are only $\kappa^+$-many decisions possible on a fixed isomorphism-type of trees and therefore we can stabilize the decisions so they do not depend on a particular choice of $\alpha$.
Let us now take $\kappa$ elements and combine them into a single condition. Namely, we consider $\l\l f_\alpha, T_\alpha\r \mid 0<\alpha<\kappa\r$ and define a condition $\l f^*,T^*\r$ as follows:
\\Let $f^*=\bigcup_{0<\alpha<\kappa}f_\alpha$. 
Define  a $E(\dom(f^*))-$tree $ T^*$. It will be
 a sort of a diagonal intersection of $T_\alpha, 0<\alpha<\kappa$. Set 

$$X=\{\vec{\nu}\mid \vec{\nu} \text{ is an increasing partial function from } \dom(f^*) \text{ to } \kappa, $$$$
\dom(\vec{\nu})\subseteq \bigcup_{\xi<\vec{\nu}(\kappa)}\dom(f_\xi), (\forall \xi<\vec{\nu}(\kappa))
|\dom(\vec{\nu})\cap \dom(f_\xi)|=\vec{\nu}(\kappa)\}.$$
To see that $X\in E(\dom(f^*))$, note that $$\dom( (j\restriction \dom(f^*))^{-1})=j''\dom(f^*)\subseteq \cup_{\xi<\kappa}\dom(j(f_\xi)).$$ Also, for every $\xi<\kappa$, $|j''\dom(f^*)\cap \dom(j(f_\xi))|=|j''\dom(f_\xi)|=|\dom(f_\xi)|$ and since $f_\xi$ is completely generic we conclude that this cardinality must be $\kappa$. Hence $(j\restriction \dom(f^*))^{-1}\in j(X)$. 
Define the first level of the tree
$$Lev_1(T^*)=\Succ_{T^*}(\l\r):=X \cap \Delta^*_{\xi<\kappa}\pi^{-1}_{\dom(f^*) \dom(f_\xi)} \Succ_{T^0_{\xi}}(\l\r).$$
Let now $\vec{\rho} \in Lev_1(T^*)$, and define $\Succ_{T^*}(\vec{\rho})$.
As above, we consider first the set
$$X_{\vec{\rho}}=\{\vec{\nu}\mid \vec{\nu} \text{ is an increasing partial function from } \dom(f^*) \text{ to } \kappa, \vec{\nu}(\kappa)>\sup(\rng(\vec{\rho})),$$$$
\dom(\vec{\nu})\subseteq \bigcup_{\xi<\vec{\nu}(\kappa)}\dom(f_\xi), (\forall \xi<\vec{\nu}(\kappa))
|\dom(\vec{\nu})\cap \dom(f_\xi)|=\vec{\nu}(\kappa)\}.$$

Clearly, $X_{\vec{\rho}}\in E(\dom(f^*))$. Let
$$\Succ_{T^*}(\vec{\rho})=X_{\vec{\rho}} \cap \Delta^*_{\xi<\kappa}\pi^{-1}_{\dom(f^*) \dom(f_\xi)} \Succ_{T_\xi^0}(\vec{\rho}\restriction \dom(f_\xi)).$$
Continue to define $T^*$ in a similar fashion. We need to ensure that for every $\xi<\kappa$, $\Succ_{T_\xi^0}(\vec{\rho}\restriction \dom(f_\xi))$ is well defined. Namely:
\begin{claim}\label{level1claim}
 For every $\xi<\kappa$, $\vec{\rho}\restriction \dom(f_\xi)\in Lev_1(T_\xi)$. Moreover,  $\xi<\vec{\rho}(\kappa)$ iff $\vec{\rho}\restriction \dom(f_\xi)\in Lev_1(T^0_\xi)$.
\end{claim}  
\begin{proof}[Proof of claim \ref{level1claim}:]
For every $\xi<\vec{\rho}(\kappa)$,  we have $$\vec{\rho}\in \pi^{-1}_{\dom(f^*)\dom(f_\xi)}(Lev_1(T^0_\xi))$$ and therefore $\vec{\rho}\restriction \dom(f_\xi)\in  Lev_1(T^0_\xi)$.
If $\xi\geq \vec{\rho}(\kappa)$, then since $\vec{\rho}\in X$, $\dom(\vec{\rho})\cap\dom(f_\xi)=\dom(\vec{\rho})\cap\dom(f_0)$ and therefore $\vec{\rho}\in S^1_\alpha=Lev_1(T_\alpha)$.
\end{proof}

 $Lev_1(T^*)$ has the property that for all $\vec{\rho}\in Lev_1(T^*)$ and $\alpha<\vec{\rho}(\kappa)$,
 $$\l f^*{}^\frown\vec{\rho}, (T^*)_{\vec{\rho}}\r\geq^* \l f_\alpha{}^\frown\vec{\rho} \restriction \dom(f_\alpha), (T^0_{\alpha})_{\vec{\rho} \restriction \dom(f_\alpha)}\r.$$
 Hence, by $(*)$, $\l f^*{}^\frown\vec{\rho}, (T^*)_{\vec{\rho}}\r$ also forces $X_{\alpha}(\l\r)\cap\vec{\rho}(\kappa)=\lusim{A}_\alpha\cap \vec{\rho}(\kappa)$.
In addition, if we have $\alpha,\beta< \vec{\rho}(\kappa)$, then $\lusim{A}_\alpha\cap \vec{\rho}(\kappa)$, $\lusim{A}_\beta\cap \vec{\rho}(\kappa)$ depends only on $(\vec{\rho}\restriction\dom(f_\alpha))(\kappa)=\vec{\rho}(\kappa)=(\vec{\rho}\restriction\dom(f_\beta))(\kappa)$ and since the isomorphism $\sigma_{\alpha,\beta}$ fixes $\kappa$ (as $\kappa\in\dom(f_\alpha)\cap\dom(f_\beta)$) it follows that  $\lusim{A}_\beta\cap \vec{\rho}(\kappa),\lusim{A}_\alpha\cap \vec{\rho}(\kappa)$
are decided to be the same set. 

Next consider $\l\vec{\rho},\vec{\nu}\r\in T^*$, as in claim \ref{level1claim}, we have that for all $\alpha<\vec{\nu}(\kappa)$,
$$(**) \l f^{*}{}^{\smallfrown}\l\vec{\rho},\vec{\nu}\r,(T^*)_{\l\vec{\rho},\vec{\nu}\r}\r\geq\l f_\alpha^{\smallfrown}\l \vec{\rho}\restriction \dom(f_\alpha),\vec{\nu}\restriction \dom(f_\alpha)\r,(T^0_\alpha)_{\l \vec{\rho}\restriction \dom(f_\alpha),\vec{\nu}\restriction \dom(f_\alpha)\r}\r$$
However, since the decision about $\lusim{A}_\alpha\cap\vec{\nu}(\kappa)$ depends now on $\vec{\rho}^{\smallfrown}\vec{\nu}(\kappa)$, then if $\alpha$ or $\beta$ are below $\vec{\rho}(\kappa)$, then $\rho\restriction\dom(f_\alpha)$ might include in its domain ordinals which are moved under the isomorphism $\sigma_{\alpha,\beta}$ and therefore we are not guaranteed that the decision about $\lusim{A}_\alpha\cap\vec{\nu}(\kappa),\lusim{A}_\alpha\cap\vec{\nu}(\kappa)$ is the same (up to $\vec{\rho}(\kappa)$ it is still the same decision). However, if both $\alpha,\beta\in [\vec{\rho}(\kappa),\vec{\nu}(\kappa))$ we have the following claim:
\begin{claim}\label{level2}
 If $\alpha,\beta\in[\vec{\rho}(\kappa),\vec{\nu}(\kappa))$ then $\l f^*{}^{\smallfrown}\l\vec{\rho},\vec{\nu}\r,(T^*)_{\l\vec{\rho},\vec{\nu}\r}\r$ decides the values of $\lusim{A}_\alpha\cap\vec{\nu}(\kappa)$ and $\lusim{A}_\beta\cap\vec{\nu}(\kappa)$ to be the same.
\end{claim}
\begin{proof}[Proof of claim \ref{level2}:]
By definition, since $\vec{\rho}\in X$ and $\alpha,\beta\geq \vec{\rho}(\kappa)$, 
 $\vec{\rho}\restriction\dom(f_\alpha)=\vec{\rho}\restriction\dom(f_\beta)$ and $\dom(\vec{\rho}\restriction \dom(f_\alpha))\subseteq\dom(f_\alpha)\cap\dom(f_0)$. Since the isomorphism $\sigma_{\alpha,\beta}$ fixes the kernel of the $\Delta$-system, we have that the decision of $$\l f_\alpha^{\smallfrown}\l \vec{\rho}\restriction \dom(f_\alpha),\vec{\nu}\restriction \dom(f_\alpha)\r,(T^0_\alpha)_{\l \vec{\rho}\restriction \dom(f_\alpha),\vec{\nu}\restriction \dom(f_\alpha)\r}\r$$
 about $\lusim{A}_\alpha\cap\vec{\nu}(\kappa)$ and the decision of
 $$\l f_\beta^{\smallfrown}\l \vec{\rho}\restriction \dom(f_\beta),\vec{\nu}\restriction \dom(f_\beta)\r,(T^0_\beta)_{\l \vec{\rho}\restriction \dom(f_\beta),\vec{\nu}\restriction \dom(f_\beta)\r}\r$$
 about $\lusim{A}_\beta\cap\vec{\nu}(\kappa)$ is the same. By $(**)$, the condition $\l f^*{}^{\smallfrown}\l\vec{\rho},\vec{\nu}\r,(T^*)_{\vec{\rho},\vec{\nu}}\r$ decides the values the same way.
\end{proof}
Similar properties persists if we move to higher levels of $T^*$.

Using density arguments we can assume that such defined condition $\l f^*,T^*\r$ is in the generic subset $G$ of $\mathbb{P}_E$.
Denote by $\l \kappa_n \mid n<\omega\r$ the Prikry sequence for the normal measure $E_\kappa$.

It follows that the sets $\l A_\alpha\mid\alpha<\kappa\r$ have the following property in $V[G]$:
$$(**)\ \ \forall n<\omega.\forall \alpha,\beta\in[\kappa_{n-1},\kappa_n). A_\alpha\cap \kappa_n=A_\beta\cap\kappa_n$$

Now, let us turn to the model $M^*=V[\l A_\alpha \mid \alpha<\kappa\r]$ and prove that $cf^{M^*}(\kappa)=\omega$.
Let us define in $M^*$ an $\omega$-sequence $\l\zeta_n\mid n<\omega\r$ as follows:

First, let $\zeta'_0$ be the least such that for some for some $\alpha,\beta<\kappa$, $A_\alpha\cap\zeta_0'\neq A_\beta\cap\zeta_0'$. There exists such $\zeta_0'$ since the sets in the sequence $\l A_\alpha\mid \alpha<\kappa\r$ are distinct. Let $\zeta''_0$ be the least such that for some $\alpha<\zeta''_0$, $A_\alpha\cap\zeta_0'\neq A_{\zeta_0''}\cap\zeta_0'$. Define $\zeta_0=\max(\zeta_0',\zeta_0'')$
\begin{claim}\label{inductionbase}
$\zeta_0\geq\kappa_0$
\end{claim}
\begin{proof}[Proof of claim \ref{inductionbase}:]
If $\zeta_0'\geq\kappa_0$ then we are done. Otherwise, suppose $\zeta_0'\leq\kappa_0$, then by $(**)$ for every $\alpha<\beta<\kappa_0$, we have $A_\alpha\cap\zeta_0'=A_\beta\cap\zeta'_0$. Hence by the definition of $\zeta''_0$, we have $\zeta''_0\geq\kappa_0$ and also $\zeta_0\geq\kappa_0$
\end{proof}
Suppose that $\zeta_n<\kappa$ was defined. Then the sequence $\l A_\alpha\mid \zeta_n<\alpha<\kappa\r$ consist of $\kappa$-many distinct subsets of $\kappa$. Since $\kappa$ is strong limit in $V[G]$, $2^{\zeta_n}<\kappa$, hence there must be $\zeta_n<\alpha<\beta<\kappa$ such that $A_\alpha\setminus\zeta_n+1\neq A_\beta\setminus\zeta_n+1$. Let $\zeta'_{n+1}$ be the minimal such that for some $\zeta_n<\alpha<\beta<\kappa$, $A_\alpha\cap\zeta'_{n+1}=A_{\beta}\cap\zeta'_{n+1}$. Finally, let $\zeta_n<\zeta''_{n+1}$ be the minimal such that for some $\alpha<\zeta''_{n+1}$, $A_\alpha\cap\zeta'_{n+1}\neq A_{\zeta''_{n+1}}\cap\zeta'_{n+1}$ and $\zeta_{n+1}=\max(\zeta'_{n+1},\zeta''_{n+1})$. To conclude that $cf^{M^*}(\kappa)=\omega$ is suffices to prove the following lemma:
\begin{claim}\label{fullinduction}
For every $n<\omega$, $\zeta_n\geq\kappa_n$.
\end{claim}
\begin{proof}[Proof of claim \ref{fullinduction}:]
By induction, for $n=0$ this is just the previous claim. Suppose that $\zeta_n\geq\kappa_n$, and toward a contradiction suppose that $\zeta_{n+1}<\kappa_{n+1}$. Then by definition, there is $\alpha$, such that $\kappa_n\leq \zeta_n<\alpha<\zeta''_{n+1}<\kappa_{n+1}$ such that $A_\alpha\cap \zeta'_{n+1}\neq A_{\zeta''_{n+1}}\cap\zeta'_{n+1}$. However, since $\zeta'_{n+1}<\kappa_{n+1}$ we reached a contradiction to $(**)$, since we found two indices $\alpha,\beta\in[\kappa_n,\kappa_{n+1})$ such that $A_\alpha\cap\kappa_{n+1}\neq A_\beta\cap\kappa_{n+1}$.
\end{proof}

The sequence $\l \zeta_n \mid n<\omega \r$ will be a cofinal sequence in $\kappa$ which \\belongs to $V[\l A_\alpha \mid \alpha<\kappa\r]$.
\end{proof}

 It turns out that $\mathbb{P}_E$ can add $\kappa^+$-many mutually generic over $V$ Cohen functions, for specially chosen extender $E$.

\begin{theorem}
 Assume $GCH$ and suppose that $E$ is a $(\kappa,\kappa^{++})$-extender.  Then after the preparation of Theorem \ref{main theorem}, there exists an extender $E'$ such that $\mathbb{P}_{E'}$  adds $\kappa^+$ mutually generic over $V$ Cohen functions.

 \end{theorem}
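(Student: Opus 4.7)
The plan is to adapt the construction of Theorem \ref{main theorem} by replacing the non-normal ultrafilter $W$ with an extender $E'$ of length $\kappa^+$ derived from $E$. Starting from the preparation of Theorem \ref{main theorem}, we have $V^*$ containing the Cohen functions $\{h_{\alpha,\gamma}\mid \gamma<\alpha^+\}$ at inaccessibles $\alpha\in L_0$ below $\kappa$ (where the lottery chose the double block) together with $\{f_{\kappa,\alpha}\mid \alpha<\kappa^+\}$ at $\kappa$ itself. The extender $E'$ will be built in $V^*$ by lifting $j_E:V\to M_E$ through the preparation via a Woodin-style surgery, as in Theorems \ref{main theorem} and \ref{Non-galvin for longer}. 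The surgical modifications will arrange that for every $\alpha<\kappa^+$ the value $f_{\kappa_1,j_E(\alpha)}(\kappa)=\alpha$ and $f_{\kappa_1,j_E(\alpha)}\restriction \kappa=f_{\kappa,\alpha}$, ensuring that the generators $\{j_E(\alpha)\mid \alpha<\kappa^+\}$ are distinct and that the Rudin-Keisler projections of the resulting extender ultrafilters are injective on a measure-one set.

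Having constructed $E'\in V^*$, take a $V^*$-generic filter $G'\subseteq \mathbb{P}_{E'}$ and extract from it the Prikry sequence $\langle c_n\mid n<\omega\rangle$ (from the normal projection, landing in $L_0$ almost always) and, for each $\alpha<\kappa^+$, an $\omega$-sequence $\langle t_n^\alpha\mid n<\omega\rangle$ where $t_n^\alpha<c_n^+$ is the value at coordinate $j_E(\alpha)$ of the $n$-th Prikry point. Define the candidate Cohen functions by gluing:
$$F^*_\alpha\restriction [c_{n-1},c_n)=h_{c_n,t_n^\alpha}\restriction[c_{n-1},c_n),$$
for sufficiently large $n$, with the initial segment defined using an $n_\alpha$-cutoff as in the proof of Theorem \ref{main theorem}.

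The genericity of $G^*:=\{p\in \Add(\kappa,\kappa^+)^V\mid p\subseteq \langle F^*_\alpha\mid \alpha<\kappa^+\rangle\}$ over $V$ is verified by essentially the same density argument as in Theorem \ref{main theorem}. Given a $V$-maximal antichain $\mathcal{A}\subseteq \Add(\kappa,\kappa^+)^V$ of size $\kappa^+$, restrict to a support $Y\in V$ of size $\kappa$ using $\kappa^+$-cc, pick a $V$-bijection $\phi:\kappa\to Y$, and form an $\in$-increasing chain $\langle N_\beta\mid \beta<\kappa\rangle$ of elementary submodels of appropriate $H_\chi^V$ with $N_\beta\cap\kappa=\gamma_\beta$ a regular cardinal. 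For $\nu_0\in C\cap E\cap R$ (the club and measure-one sets defined as in Theorem \ref{main theorem}, where $E$ now captures that $\alpha\mapsto t_n^\alpha$ is injective on $\phi''\nu_0$), the Cohens $\langle h_{\nu_0,t^\alpha_{n(\nu_0)}}\mid \alpha\in\phi''\nu_0\rangle$ form a mutually $V$-generic family for $\Add(\nu_0,\phi''\nu_0)$, so a condition $d\in\mathcal{A}\cap N_{\nu_0}$ can be found below them. Then, as in Theorem \ref{main theorem}, one chooses two successive Prikry points $c_{n_0}<c_{n_0+1}$ to first decode the truncated values $X_0^{\nu_0}$ and then extend $d\restriction X_1^{\nu_0}$ using Cohens at level $c_{n_0+1}$.

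The main obstacle is producing the surgery on the $M_E$-side that simultaneously secures the injectivity of $\alpha\mapsto t_n^\alpha$ and the coordinate-wise matching $h_{\kappa_1,j_E(\alpha)}\restriction \kappa=f_{\kappa,\alpha}$, while preserving genericity of the lifted generic through the whole iteration up to $j_E(\kappa)$. Unlike Theorem \ref{main theorem}, which only required two iterations of the ultrapower, the extender lifting must handle $\kappa^+$-many coordinates simultaneously and verify, via a careful choice of generators and their Rudin-Keisler projections, that the resulting extender-based Prikry forcing produces sequences $\langle t_n^\alpha\rangle$ which behave like a family of mutually $V$-generic indices. The density argument then proceeds, with the extender $E'$ playing the role that the pair $(W,j_2^*)$ played in Theorem \ref{main theorem}, reflecting the required properties of the $t_n^\alpha$'s via the $E'(\{j_E(\alpha_1),\ldots,j_E(\alpha_n)\})$-largeness at each level.
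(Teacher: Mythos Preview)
Your approach is fundamentally different from the paper's, and considerably more complicated. The paper does \emph{not} attempt to verify directly that some lifted extender-based Prikry forcing produces $\kappa^+$ mutually generic Cohens via a new gluing argument. Instead, it reuses Theorem~\ref{main theorem} as a black box: it lifts $j_E$ through the preparation to an extender $E'$ (as in your first paragraph), but then \emph{composes} $j_{E'}$ with $j_{W'}$, where $W'=j_{E'}(W)$ and $W$ is the ultrafilter already constructed in Theorem~\ref{main theorem}. From $j_*=j_{W'}\circ j_{E'}$ one derives a $(\kappa,\lambda)$-extender $E^*$ for some $\lambda>\kappa_1:=j_{E'}(\kappa)$. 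A one-line computation gives $E^*(\kappa_1)=W$, so $\Pri(W)$ is a projection of $\mathbb{P}_{E^*}$ (restriction to the single coordinate $\kappa_1$), and hence $\mathbb{P}_{E^*}$ adds $\kappa^+$ Cohens simply because $\Pri(W)$ already does. The extender witnessing the theorem is $E^*$, not the raw lift of $E$ you work with.

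Your direct route has genuine gaps. You define $t_n^\alpha$ as ``the value at coordinate $j_E(\alpha)$ of the $n$-th Prikry point'' and assert that $\alpha\mapsto t_n^\alpha$ is injective on $\phi''\nu_0$ and that the $h_{c_n,t_n^\alpha}$ are mutually generic; neither claim is justified, and in Merimovich's forcing the Prikry objects are partial functions whose domains need not contain any prescribed coordinate. More importantly, the density argument in Theorem~\ref{main theorem} hinges on the two-step structure of $j_2^*=k^*\circ j_1^*$ and the dichotomy between $j_1''\kappa^+$ and its complement, which is exactly what produces the strong non-Galvin witness and lets one separate $X_0^{\nu_0}$ from $X_1^{\nu_0}$. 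Your $E'$ is a single lift of $j_E$ with no analogue of $k^*$ or of this dichotomy, so there is no mechanism to carry that argument through. The paper's trick of threading $W$ into the extender as one of its measures is precisely what avoids having to redo all of this.
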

\begin{proof}
Let $j=j_E:V\rightarrow M$ be the natural ultrapower by the $(\kappa,\kappa^{++})-$ extender $E$, then $j(\kappa)>\kappa^{++}$, $crit(j)=\kappa$ and ${}^\kappa M\subseteq M$. Recall that the preparation forcing in Theorem \ref{main theorem} is an Easton support iteration $$\l \calP_{\alpha},\lusim{Q}_\beta\mid \alpha\leq\kappa+1,\beta\leq\kappa\r$$ such that $\lusim{Q}_\beta$ is trivial unless $\beta$ is inaccessible in which case if $\beta<\kappa$ then $\lusim{Q}_\beta$ is a $\calP_\beta$-name for $\Lot(\Add(\beta,\beta^+),\Add(\beta,\beta^+)^2)$. At $\kappa$, $\lusim{Q}_\kappa$ is a name for $\Add(\kappa,\kappa^+)$. Let $G_\kappa*g_\kappa$ be $V-$generic for $P_\kappa*\lusim{Q}_\kappa$. In $V[G_{\kappa}*g_\kappa]$ we can construct an $M-$generic filter for $j(\calP_\kappa*\lusim{Q}_\kappa)$ by taking $G_\kappa*g_\kappa$ to be the generic up to $\kappa$, including $\kappa$ and choosing that the lottery sum forces $\Add(\kappa,\kappa^+)$ (this forcing is the same in $V[G_\kappa]$ and $M[G_\kappa]$ since $(\kappa^+)^{M[G_\kappa]}=\kappa^+$ and $M[G_\kappa]$ is closed under $\kappa$-sequences of $V[G_\kappa]$). Above $\kappa$ we have sufficient closure, from the point of view of $V[G_\kappa*g_\kappa]$, and by $GCH$ there are not too many dense open subsets of the tail forcing $\calP_{(\kappa,j(\kappa)]}$ to meet, hence the
 embedding $j$ lifts to $$j\subseteq j^*:V[G_{\kappa}*g_\kappa]\rightarrow M[j(G_{\kappa})*j(g_\kappa)].$$ Since the cardinals in all the models are preserved, it follows that (\cite[Proposition 8.4]{CummingsHand}) $$(\kappa^{++})^{M[j(G_{\kappa})*j(g_\kappa)]}=\kappa^{++}<j(\kappa)\text{ and  }{}^{\kappa}M[j(G_{\kappa})*j(g_\kappa)]\subseteq M[j(G_{\kappa})*j(g_\kappa)].$$ So in $V[G_\kappa*g_\kappa]$ the extender $E$ extender to an extender $E'=\l E'_a\mid a\in[\kappa]^{<\omega}\r$ defined by $E'_a=\{X\subseteq\kappa^{|a|}\mid a\in j^*(X)\}$.

Let $W$ be the non-Galvin, $\kappa-$complete ultrafilter over $\kappa$  with preparation for adding $\kappa^+$-many Cohens (See Theorem \ref{Proof for W is Cohen}).

Combine $E',W$ together as follows.
First take an ultrapower with $E'$. Let $j_{E'}:V \to M_{E'}$ be the corresponding embedding. Denote $j_{E'}(\kappa)$ by $\kappa_1$ and let $W'=j_{E'}(W)$.
Then take an ultrapower of $M_{E'}$ with $W'$. Let $j_{W'}:M_{E'} \to M$ be the corresponding embedding.
\\Consider $j_*=j_{W'}\circ j_{E'}:V\to M$. Let $E^*$ be the derived $(\kappa,\lambda)$-extender for some $\kappa_1<\lambda\leq j_*(\kappa)$.
\\Note that $E^*(\kappa_1)=W$, since for any $X\subseteq \kappa$,
$$X\in E^*(\kappa_1) \Leftrightarrow \kappa_1 \in j_*(X)\Leftrightarrow\kappa_1\in j_{W'}( j_{E'}(X))\Leftrightarrow j_{E'}(X)\in W'$$ $$\Leftrightarrow j_{E'}(X)\in j_{E'}(W)\Leftrightarrow X\in W.$$

The Prikry forcing with $W$ adds $\kappa^+-$many Cohens over $V$. This forcing is a part of $\mathbb{P}_{E^*}$, since $W$ appears as one of the measures of $E^*$, which implies the Theorem.

\end{proof}

\subsection{ Cohen subsets of $\kappa^+$ }

Let us argue here that both versions add $\kappa^{++}-$many (or $\lambda-$many if the extender has $\lambda$ generators for a regular $\lambda>\kappa$) Cohen subsets of $\kappa^+$ mutually generic over $V$.

Start with $\calP_E$ of \cite{Git-Mag}.

\begin{theorem}\label{thm 15}
 Let $G\subseteq \calP_E$ be a generic.
 Then in $V[G]$ there is a sequence $\l Z_\xi \mid \xi<\kappa^{++}\r$ of mutually generic over $V$ Cohen subsets of $\kappa^+$.

 \end{theorem}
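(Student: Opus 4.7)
The plan is to extract from the generic $G$ of $\mathcal{P}_E$ a $V$-generic filter for $\Add(\kappa^+,\kappa^{++})^V$; its columns will be the required $Z_\xi$'s. The raw data is the family $\langle F_\alpha:\omega\to\kappa\mid\alpha<\kappa^{++}\rangle$ of coordinate Prikry sequences added by $\mathcal{P}_E$, each satisfying $\pi_{\alpha,\kappa}(F_\alpha(n))=t_n$ (the $n$th normal Prikry point) and hence producing an offset $F_\alpha(n)-t_n\in[0,t_n^{++})$. A bit-count using GCH in $V$ shows that the total information content of these offsets is $\kappa^{++}$, exactly matching the bit-count of a generic for $\Add(\kappa^+,\kappa^{++})^V$.

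Working in $V$ under GCH, I would fix a partition $\kappa^{++}=\bigsqcup_{\xi<\kappa^{++}}J_\xi$ with $|J_\xi|=\kappa^+$ and enumerations $J_\xi=\{\alpha^\xi_\beta\mid\beta<\kappa^+\}$, and for each inaccessible $\nu<\kappa$ a bijection $\phi_\nu:\nu^{++}\to{}^{\nu^+}2$. Then for $\xi<\kappa^{++}$ and $\beta<\kappa^+$, I would define $Z_\xi(\beta)\in\{0,1\}$ by reading off a single bit of $\phi_{t_n}(F_{\alpha^\xi_\beta}(n)-t_n):t_n^+\to 2$, where $n$ and the coordinate inside $t_n^+$ are recovered from $\beta$ via a canonical $V$-partition of $\kappa^+$ into blocks of length $<\kappa$ indexed by $n$. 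This yields well-defined subsets $Z_\xi\subseteq\kappa^+$ in $V[G]$.

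The core of the argument is verifying mutual $V$-Cohen genericity. Given a dense open $D\in V$ of $\Add(\kappa^+,\kappa^{++})^V$ and a condition $p\in\mathcal{P}_E$, a $\Delta$-system analysis on $D$ (using the $\kappa^{++}$-cc of $\Add(\kappa^+,\kappa^{++})^V$ in $V$) extracts some $d\in D$ of small support $S\subseteq\kappa^+\times\kappa^{++}$, $|S|<\kappa$. One then extends $p$ in $\mathcal{P}_E$ in two stages: first, a $\leq^*$-extension adjoining $\{\alpha^\xi_\beta\mid(\beta,\xi)\in S\}$ to $\supp(f^p)$, using the $\kappa^+$-properness of $\mathcal{P}_E$ established in the previous subsection; then, finitely many one-point extensions driving each value $F_{\alpha^\xi_\beta}(n)-t_n$ into the $V$-subset of $t_n^{++}$ prescribed by $d$. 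The chief obstacle is that each $U_\alpha$ is an ultrafilter and so deterministically decides every $V$-definable bit of $F_\alpha(n)$; the workaround exploits the richness of the $(\kappa,\kappa^{++})$-extender $E$: by elementarity of $j_E$, the ``$V$-determined'' values $j_E(b)(\alpha)$ for any $b\in V$ realize every finite pattern as $\alpha$ ranges over $[\kappa,\kappa^{++})$. Combined with a $\Delta$-system analysis of the Cohen parts $f^p$ in the spirit of the proof of Theorem~\ref{CarmiVersion theorem} -- and allowing the coordinate selection $\alpha^\xi_\beta$ some flexibility within $J_\xi$ during the extension step -- one can match any required finite pattern, completing the density argument.
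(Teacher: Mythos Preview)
Your approach is substantially more involved than necessary, and the detour through offsets and bit-encodings creates a genuine difficulty that the paper's argument sidesteps entirely.

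The paper's proof is essentially one line. Partition $\kappa^{++}$ into blocks $I_\xi$ of order type $\kappa^+$ with order isomorphisms $\sigma_\xi:I_\xi\to\kappa^+$, and set
\[
Z_\xi=\{\sigma_\xi(\alpha)\mid \alpha\in I_\xi,\ t_\alpha(0)\text{ is even}\}.
\]
The density argument is then immediate, because the first element $t_\alpha(0)$ of the coordinate-$\alpha$ Prikry sequence is part of the \emph{Cohen part} $f^p(\alpha)$ of a condition --- it is not drawn from any measure-one set, and its parity can be prescribed by a direct extension. Given a dense open $D\subseteq\Add(\kappa^+,\kappa^{++})$ and $p\in\calP_E$, one extends $p$ to $r$ so that $r^\gamma\neq\langle\rangle$ for every $\gamma\in\supp(r)$, reads off the Cohen condition $\langle r^\gamma(0)\mid\gamma\in\supp(r)\rangle$ (via $\sigma_\xi$), extends it inside $D$, and translates back to a condition $q\geq r$ in $\calP_E$ by enlarging the support and prescribing first-stem parities at the (at most $\kappa$-many) new coordinates.

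Your proposal instead extracts bits from the offsets $F_\alpha(n)-t_n$ for varying $n$, and here your ``ultrafilter determinism'' worry is fatal rather than a technicality: for all but finitely many $n$, the value $F_\alpha(n)$ lies in a prescribed $U_\alpha$-measure-one set, so every $V$-definable bit of it (such as any coordinate of your $\phi_{t_n}(F_\alpha(n)-t_n)$) is already decided by $U_\alpha$. Your suggested workaround --- allowing $\alpha^\xi_\beta$ flexibility within $J_\xi$ during the density step --- is incompatible with having already fixed the definition of $Z_\xi(\beta)$ in terms of a specific $\alpha^\xi_\beta$; you cannot retroactively change the coordinate after $Z_\xi$ is defined. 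There is also a cardinality mismatch in your scheme: a partition of $\kappa^+$ into blocks of length $<\kappa$ indexed by $n<\omega$ covers only $\kappa$ many ordinals, so $Z_\xi(\beta)$ is undefined for most $\beta<\kappa^+$.

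The idea you are missing is simply that the zeroth element of each coordinate Prikry sequence is already a free Cohen bit living in the stem, and there are $\kappa^{++}$ coordinates. No $\Delta$-system argument, no properness, and no elementarity of $j_E$ is needed.
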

\begin{proof}
Let  $\l t_\alpha \mid \alpha<\kappa^{++}\r$ be the Prikry sequences added by $G$.

Split, in $V$, $\kappa^{++} $ into disjoint intervals $\l I_\xi\mid \xi<\kappa^{++}\r$ order type of each $\kappa^+$.
Denote by $\sigma_\xi$ the order isomorphism between $I_\xi$ and $\kappa^+$.
\\Now, in $V[G]$, set
$$Z_\xi=\{\sigma_\xi(\alpha)\in I_\xi \mid t_\alpha(0) \text{ is even }\}.$$
Let us argue that such a sequence is as desired.

Work in $V$. Let $p\in \calP$ and $D$ be a dense open subset of $\Add(\kappa^+, \kappa^{++})$.
\\Let us find $q\geq p$ such that

$$q\Vdash \l \lusim{Z}_\xi \mid \xi<\kappa^{++}\r \text{ extends an element of } D.$$

Extend first $p$ to some $r$ such that for every $\gamma\in \supp(r), r^\gamma $ is not equal to the empty sequence.
Now, using $I_\xi,\sigma_\xi$'s turn $\l r^\gamma(0) \mid \gamma\in \supp(r)\r$ into a condition in $\Add(\kappa^+, \kappa^{++})$.
Extend it to one in $D$ and move back to $\calP$ using $I_\xi,\sigma_\xi^{-1}$'s. 
Finally, turn the result into a condition $q$ in $\calP$ stronger than $r$.
It will be as desired.

\end{proof}

The situation in the case of the Merimovich version is very similar:

\begin{theorem}
 Let $G\subseteq \mathbb{P}_E$ be a generic.
 Then in $V[G]$ there is a sequence $\l Z_\xi \mid \xi<\kappa^{++}\r$ of mutually generic over $V$ Cohen subsets of $\kappa^+$.

 \end{theorem}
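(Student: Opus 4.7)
The plan is to mirror the argument of Theorem~\ref{thm 15}, replacing the Prikry sequences of $\calP_E$ by the $\omega$-sequences read off the ``Cohen part'' of generic conditions of $\mathbb{P}_E$. Partition $\kappa^{++}$ in $V$ into pairwise disjoint intervals $\l I_\xi\mid\xi<\kappa^{++}\r$, each of order type $\kappa^+$, and fix order isomorphisms $\sigma_\xi:I_\xi\to\kappa^+$. For each $\alpha\in\lambda\setminus\kappa$, let $t_\alpha\in V[G]$ be the $\omega$-sequence obtained by taking the union of $f^p(\alpha)$ over all $p\in G$ with $\alpha\in\dom(f^p)$ (tail filled in by the $\nu(\alpha)$'s of one-point extensions). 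In $V[G]$ set
\[
Z_\xi=\{\sigma_\xi(\alpha)\in I_\xi\mid t_\alpha(0)\text{ is even}\},\qquad \xi<\kappa^{++}.
\]
The claim is that $\l Z_\xi\mid\xi<\kappa^{++}\r$ is mutually $V$-generic for $\Add(\kappa^+,\kappa^{++})^V$.

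To verify mutual genericity I would work in $V$, fix $p\in\mathbb{P}_E$ and a dense open $D\subseteq\Add(\kappa^+,\kappa^{++})$, and produce $q\geq p$ forcing that $\l \lusim{Z}_\xi\mid\xi<\kappa^{++}\r$ extends some element of $D$. First extend $p$ via a direct or one-point extension to a condition $r=\l f^r,A^r\r$ with $f^r(\alpha)\neq\emptyset$ for every $\alpha\in\dom(f^r)$; then $r\Vdash t_\alpha(0)=f^r(\alpha)(0)$ for every such $\alpha$. Using the isomorphisms $\sigma_\xi$, read off the condition
\[
s=\{\l(\xi,\sigma_\xi(\alpha)),\epsilon_\alpha\r\mid \alpha\in\dom(f^r)\cap I_\xi\}\in\Add(\kappa^+,\kappa^{++}),
\]
where $\epsilon_\alpha=1$ iff $f^r(\alpha)(0)$ is even. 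Since $|\dom(f^r)|\leq\kappa<\kappa^+$, this $s$ is a legitimate condition. Extend $s$ to $s'\in D$.

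The last step is to translate $s'$ back into a Merimovich condition $q\geq^* r$. For every new pair $(\xi,\beta)\in\dom(s')\setminus\dom(s)$ the ordinal $\alpha_{\xi,\beta}:=\sigma_\xi^{-1}(\beta)\in I_\xi$ is a coordinate not in $\dom(f^r)$. Let $d^*=\dom(f^r)\cup\{\alpha_{\xi,\beta}\mid(\xi,\beta)\in\dom(s')\setminus\dom(s)\}$ (of size $\leq\kappa$) and define $f^q:d^*\to[\kappa]^{<\omega}$ by $f^q\restriction\dom(f^r)=f^r$ and $f^q(\alpha_{\xi,\beta})=\l e_{\xi,\beta}\r$ for some small ordinal $e_{\xi,\beta}<\kappa$ whose parity matches $s'(\xi,\beta)$. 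Lift $A^r$ via the natural projection $E(d^*)\to E(\dom(f^r))$ to a good set $A^q\in E(d^*)$, shrinking if necessary so that $\max(f^q(\alpha_{\xi,\beta}))<\nu(\kappa)$ for all $\nu\in A^q$. Then $q=\l f^q,A^q\r$ is a direct extension of $r$ and by construction forces that the characteristic function of $\l\lusim{Z}_\xi\mid\xi<\kappa^{++}\r$ extends $s'\in D$. The main potential obstacle here is ensuring that the lifted $A^q$ is good in the sense of Merimovich and satisfies the max-bound clause for every new coordinate; both are standard manipulations of $E(d)$-large sets since the preimage of an $E(d)$-large set under the restriction map $E(d^*)\to E(d)$ is $E(d^*)$-large, and the goodness together with the max-bound can be arranged by further shrinking $A^q$ to the $E(d^*)$-large subset where $\nu(\kappa)$ exceeds a uniform threshold above all chosen $e_{\xi,\beta}$'s.
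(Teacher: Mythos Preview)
Your first step --- ``extend $p$ via a direct or one-point extension to a condition $r=\l f^r,A^r\r$ with $f^r(\alpha)\neq\emptyset$ for every $\alpha\in\dom(f^r)$'' --- is in general impossible, and this is precisely the point where the Merimovich version diverges from the original $\calP_E$. In $\mathbb{P}_E$ the domain $d=\dom(f^p)$ may have size $\kappa$, with $\kappa$-many coordinates $\alpha$ satisfying $f^p(\alpha)=\emptyset$. Any extension $q\geq p$ factors as $p^{\smallfrown}\vec{\nu}\leq^* q$ for some finite $\vec{\nu}=\l\nu_1,\dots,\nu_n\r$; each $\nu_i$ has $|\dom(\nu_i)|\leq\nu_i(\kappa)<\kappa$ by goodness, so $\bigcup_i\dom(\nu_i)$ has size $<\kappa$, and direct extension leaves $f$-values on $\dom(f^p)$ unchanged. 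Hence $\kappa$-many coordinates in $\dom(f^q)\supseteq\dom(f^p)$ still have $f^q(\alpha)=\emptyset$, and your definition of $s$ via $f^r(\alpha)(0)$ is undefined at those $\alpha$.

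The paper's proof addresses exactly this obstacle: rather than try to make $f^r(\alpha)$ nonempty, it observes that for such $\alpha$ the value $t_\alpha(0)$ will be $\nu(\alpha)$ for the first $\nu$ drawn from the measure-one set, and since $[\nu\mapsto\nu(\alpha)]_{E(d)}=\alpha$, the parity of $\nu(\alpha)$ agrees with the parity of $\alpha$ on an $E(d)$-large set. Thus after shrinking $A^p$ the condition already forces that $t_\alpha(0)$ is even iff $\alpha$ is even, for every empty coordinate $\alpha$. This lets you read off the corresponding bits of $s$ from $\alpha$ itself, without having to alter $f$ at those coordinates. The rest of your argument --- extending $s$ to $s'\in D$ and translating back by adding fresh coordinates with prescribed singletons --- is correct and matches the paper, but it only works once this missing observation is in place.
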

\begin{proof}
Proceed as in \ref{thm 15} and define $\l Z_\xi \mid \xi<\kappa^{++}\r$.

Work in $V$. Let $p\in \calP$ and $D$ be a dense open subset of $\Add(\kappa^+, \kappa^{++})$.
\\Let us find $q\geq p$ such that

$$q\Vdash \l \lusim{Z}_\xi \mid \xi<\kappa^{++}\r \text{ extends an element of } D.$$

A slight difference here is that the support of $p=\l f, T\r$, i.e., $\dom(f)$ may have $\kappa$ many places $\gamma$ with $f(\gamma)=\l\r$.
\\As a result, for such $\gamma$, $t_\gamma(0)$ will be determined only after an element of the corresponding set of measure one is picked,
and there are $\kappa-$many such $\gamma$'s.
\\However, we do not need the exact value of $t_\gamma(0)$, but rather to know whether it is even or odd.
This is determined (on a set of measure one) by $\gamma$ itself. Namely, in this situation, $t_\gamma(0)$ will be even iff $\gamma$ is even.  
\\The rest of the argument is as in \ref{thm 15}.

\end{proof}
\section{Acknowledgement}
The authors would like to thank the referee for their insightful remarks and the improvement of the content of the paper. Also, to Mohammad Golshani for his crucial correction in the first draft of the paper. Finally, they would like to thank the participants of the Set Theory Seminar of Tel-Aviv University, and in particular to Menachem Magidor, Carmi Merimovich and 
Sittinon Jirattikansakul for their wonderful comments and suggestions.





 \bibliographystyle{amsplain}
\bibliography{ref}
\end{document}